\documentclass{amsart}

\bibliographystyle{abbrv}

\usepackage{verbatim}

\usepackage{amsfonts}

\usepackage{amsthm}

\usepackage{amssymb}

\usepackage{amsmath}

\usepackage{mathabx}

\usepackage{enumerate}

\usepackage[all]{xy}

\usepackage{graphicx}

\usepackage{hyperref}

\newcommand{\N}{\mathbb{N}}

\newcommand{\Z}{\mathbb{Z}}

\newcommand{\R}{\mathbb{R}}

\newcommand{\C}{\mathbb{C}}

\newcommand{\Manoa}{M\=anoa}

\newcommand{\Hawaii}{Hawai\kern.05em`\kern.05em\relax i}

\newcommand{\band}{\C[X;E]}

\newcommand{\AX}{\mathcal{A}^p_E(X)}

\newcommand{\AXro}{\mathcal{A}^p_E(X)^{\$,\omega}}
\newcommand{\AXrotwo}{\mathcal{A}^2_E(X)^{\$,\omega}}

\newcommand{\AXr}{\mathcal{A}^p_E(X)^{\$}}

\newcommand{\AXtwoC}{\mathcal{A}^2_{\C}(X)}
\newcommand{\AXtwoE}{\mathcal{A}^2_{E}(X)}

\newcommand{\KX}{\mathcal{K}^p_E(X)}

\newcommand{\pcom}{$\mathcal{P}$-compact}

\newcommand{\pfred}{$\mathcal{P}$-Fredholm}

\usepackage{color}

\DeclareMathOperator{\diam}{diam}
\DeclareMathOperator{\supp}{supp}
\DeclareMathOperator{\prop}{prop}
\DeclareMathOperator{\opspec}{\sigma_{op}}
\DeclareMathOperator{\opspecRRS}{\sigma_{op}^{RRS}}
\DeclareMathOperator{\opspecRoe}{\sigma_{op}^{Roe}}
\DeclareMathOperator{\kernel}{kernel}
\def\lp{\ell^p_E}

\usepackage{paralist}

\theoremstyle{plain}
\newtheorem{theorem}{Theorem}[section]
\newtheorem{lemma}[theorem]{Lemma}
\newtheorem{corollary}[theorem]{Corollary}
\newtheorem{proposition}[theorem]{Proposition}

\newtheorem{definition-theorem}[theorem]{Definition / Theorem}
\newtheorem{claim}[theorem]{Claim}


\newtheorem*{conjecture*}{Conjecture}
\newtheorem*{theorem*}{Theorem}
\newtheorem*{claim*}{Claim}

\theoremstyle{definition}
\newtheorem{definition}[theorem]{Definition}

\newtheorem{examples}[theorem]{Examples}

\theoremstyle{remark}
\newtheorem{remark}[theorem]{Remark}


\newtheorem*{example*}{Example}  
\newtheorem*{remark*}{Remark}

\begin{document}

\title{A metric approach to limit operators}

\author{J\'{a}n \v{S}pakula}
\address{Mathematical Sciences, University of Southampton, SO17 1BJ, United Kingdom}
\email{jan.spakula@soton.ac.uk}
\author{Rufus Willett}
\address{2565 McCarthy Mall, University of \Hawaii\ at \Manoa, Honolulu, HI 96822, USA}
\email{rufus.willett@hawaii.edu}
\thanks{Second author partially supported by the US NSF}

\subjclass[2010]{Primary 47A53, Secondary 30Lxx, 46L85, 47B36}

\date{11 September 2014}

\begin{abstract}
We extend the limit operator machinery of Rabinovich, Roch, and Silbermann from $\Z^N$ to (bounded geometry, strongly) discrete metric spaces.  We do not assume the presence of any group structure or action on our metric spaces.  Using this machinery and recent ideas of Lindner and Seidel, we show that if a metric space $X$ has Yu's property A, then a band-dominated operator on $X$ is Fredholm if and only if all of its limit operators are invertible.  We also show that this always fails for metric spaces without property A.
\end{abstract}

\maketitle

\section{Introduction}

Thinking of an operator $A$ on $\ell^p(\Z^N)$ as a $\Z^N$-by-$\Z^N$ matrix, we say that $A$ is a \emph{band operator} if the only non-zero entries in its matrix appear within a fixed distance from the diagonal. The \emph{band-dominated} operators in $\ell^p(\Z^N)$ are then norm--limits of band operators.  The group $\Z^N$ acts on $\ell^p(\Z^N)$ by shifts: for each $m\in \Z^N$ there is an isometric isomorphism $V_m:\ell^p(\Z^N)\to \ell^p(\Z^N)$ defined on the canonical basis $\{\delta_k\}_{k\in \Z^N}$ by
$$
V_m:\delta_k\mapsto \delta_{k+m}.
$$
Given a band-dominated operator $A$ and a sequence $(m_n)_{n\in\N}$ in $\Z^N$ converging to infinity, the sequence $(V_{-m_n}AV_{m_n})_{n\in \N}$ of \emph{shifts} of $A$ by $m_n$ always contains a strongly convergent subsequence, and the strong limit is called the \emph{limit operator} of $A$ associated with the given subsequence. The collection of all limit operators of $A$ is called the \emph{operator spectrum} and is denoted $\opspec(A)$.  See the book \cite{Rabinovich:2004qy} and the paper \cite{Lindner:2014aa} for a recent list of relevant references, and also for many examples of band-dominated operators, limit operators, and their applications. 

One of the most important goals of limit operator theory is to study the Fredholm property for the class of band-dominated operators on $\ell^p$-spaces over $\Z^N$ in terms of the operator spectrum.   The following theorem of Rabinovich, Roch and Silbermann characterises when band-dominated operators are Fredholm.

\begin{theorem}\cite[Theorem 2.2.1]{Rabinovich:2004qy}\label{thm:fred}
Let $T$ be a band-dominated operator in $\ell^p(\Z^n)$. Then $T$ is Fredholm if and only if all $S\in\opspec(T)$ are invertible and their inverses are uniformly bounded in norm. \qed
\end{theorem}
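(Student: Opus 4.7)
The overall strategy is to recast Fredholmness as invertibility modulo an ideal and then translate that invertibility into the language of limit operators. Let $\mathcal{A}^p$ denote the Banach algebra of band-dominated operators on $\ell^p(\Z^n)$, and let $\mathcal{K}^p \subseteq \mathcal{A}^p$ be the two-sided ideal of $\mathcal{P}$-compact operators, meaning those $K \in \mathcal{A}^p$ with $\|K - P_n K\| \to 0$ and $\|K - K P_n\| \to 0$ as $n \to \infty$, where $P_n$ is multiplication by the indicator function of the ball of radius $n$ about the origin. A standard preliminary argument shows that $T \in \mathcal{A}^p$ is Fredholm on $\ell^p(\Z^n)$ if and only if $T + \mathcal{K}^p$ is invertible in $\mathcal{A}^p/\mathcal{K}^p$; once this reduction is in place, the theorem becomes a statement about invertibility in this quotient algebra.

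For the ``only if'' direction, suppose $T$ is Fredholm and choose a parametrix $R \in \mathcal{A}^p$ with $RT - I,\, TR - I \in \mathcal{K}^p$. Given a sequence $(m_n) \to \infty$ along which $V_{-m_n} T V_{m_n}$ converges strongly to some $S \in \opspec(T)$, pass to a further subsequence (diagonally, as needed) so that $V_{-m_n} R V_{m_n}$ also converges strongly, to some operator $S'$ with $\|S'\| \le \|R\|$. One verifies that the shifts of any $K \in \mathcal{K}^p$ tend to $0$ strongly, essentially directly from the defining property of $\mathcal{K}^p$. Taking strong limits of the identities $RT - I,\, TR - I \in \mathcal{K}^p$ then yields $S'S = SS' = I$, so every $S \in \opspec(T)$ is invertible with the uniform bound $\|S^{-1}\| \le \|R\|$.

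The ``if'' direction is the main work. Suppose every $S \in \opspec(T)$ is invertible with $\|S^{-1}\| \le C$. The plan is a cover-and-patch construction: decompose $\Z^n$ into a family of large cubes $\{Q_i\}$ with centres $m_i$ escaping to infinity, arranged so that on each $Q_i$ the operator $T$ is close, up to band-width-controlled error, to a shifted limit operator $V_{m_i} S_i V_{-m_i}$ with $S_i \in \opspec(T)$; invert each $S_i$ using the uniform bound $C$; pull the inverses back by conjugation to the cubes; and glue the local inverses into a global $R \in \mathcal{A}^p$ using a partition of unity of slow variation subordinate to $\{Q_i\}$. Together with a separate treatment on a large central ball, this yields an approximate parametrix with $RT - I,\, TR - I$ of small norm modulo $\mathcal{K}^p$, and a Neumann-series argument upgrades it to a genuine parametrix, proving Fredholmness.

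The main obstacle is the patching step: one has to bound the commutators of the cut-off functions in the partition of unity with the band-dominated operator $T$, and choose cube sizes, shift parameters, and smoothing rates so that every error term arising from gluing actually lies in $\mathcal{K}^p$. On $\Z^n$ the translation structure provides the required slowly-varying cut-offs essentially for free (essentially a F\o lner-type argument), and this is what makes the classical proof go through; it is precisely the absence of such a tool in a general metric space that forces the paper to invoke property A to carry out the analogous patching.
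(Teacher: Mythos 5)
First, a framing point: the paper does not reprove Theorem \ref{thm:fred}; it quotes it from Rabinovich--Roch--Silbermann and then recovers it as the special case $X=\Z^n$ (a space with property A) of Theorem \ref{main}. So the relevant comparison is with the proof of Theorem \ref{main}. Your ``if'' direction is, in architecture, exactly the paper's Section \ref{sec:parametrices}: local inverses of norm at most $C$ on a uniformly bounded cover, obtained from invertibility of the limit operators; gluing by a slowly varying partition of unity with commutator estimates (Lemmas \ref{construct} and \ref{almcom}); and a Neumann-series upgrade (Proposition \ref{main2}). Your closing observation --- that on $\Z^n$ the slowly varying cut-offs come for free from the translation structure, and that this is precisely what property A replaces in general --- is the correct diagnosis. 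One caveat: it is not automatic that each far-away cube $Q_i$ sees $T$ as close to a \emph{single} shifted limit operator; you need precompactness of the shift orbit plus an argument by contradiction to produce local inverses on all but finitely many cubes with the uniform bound. That is the content of Lemma \ref{localinverses}, and it is the step your sketch compresses into ``arranged so that''.

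Your ``only if'' direction takes a genuinely different route from the paper's, and it is where the one real gap sits. The paper (like RRS's Proposition 1.2.9) never takes a limit of the shifted parametrix: it uses the inequality $\|B\|\,\|Aw\|\geq\|(1-K_2)w\|\geq 1-\epsilon$ on test vectors $w$ modelling finitely supported vectors in the limit space (Lemma \ref{lem:closevectors}) to conclude that each limit operator is bounded below by $1/\|B\|$, and then runs the same argument on the adjoint to get surjectivity. Your route requires extracting a \emph{strongly} convergent subsequence of $V_{-m_n}RV_{m_n}$, and for a general bounded parametrix $R$ this is not available: bounded sequences in $\mathcal{L}(\ell^p(\Z^n))$ need not have strongly convergent subsequences, and the standard way to guarantee one --- replacing $R$ by a band-dominated (hence rich) parametrix --- already relies on the nontrivial inverse-closedness of the band-dominated algebra modulo the compacts, which you cannot assume at this stage. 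The argument can be repaired (e.g.\ by passing to a WOT-convergent subsequence via reflexivity and separability, and using that the shifts of $T^*$ converge strongly to close both identities $S'S=SS'=I$), but the cleaner fix is the lower-bound-on-test-vectors argument, which uses only the \emph{norm} of $R$ and no limit of its shifts at all.
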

\noindent Whether the uniform boundedness condition in the above is really necessary was a long-standing open problem: it was recently shown not to be by Lindner and Seidel \cite{Lindner:2014aa}.

John Roe \cite{Roe:2005lr} has explained the connection between the above setup and the large scale (`coarse') geometry of more general discrete groups.  In the Hilbert space case (i.e.~$p=2$), coarse geometers call the band operators \emph{finite propagation operators} and the collection of all band-dominated operators comprises the \emph{translation C*-algebra} (also called the \emph{uniform Roe algebra} in the literature). Roe extended the symbol calculus implicit in Theorem \ref{thm:fred} to all discrete groups $\Gamma$ and proved the Fredholmness criterion \ref{thm:fred} for all \emph{exact} discrete groups $\Gamma$.


Summarising, Roe established that the limit operator theory setup is inherently \emph{coarse geometric} in nature, and that one may expect that the operator theoretic properties of band-dominated operators on a discrete group are closely related to the large--scale geometry of the underlying discrete group.\\

Having this philosophy in mind, we extend the framework of limit operator theory to a purely metric setting: we consider band-dominated operators over an arbitrary (strongly discrete, bounded geometry) metric space $X$. As well as substantially generalising existing results in the literature, we believe our approach clarifies the geometric inputs that are implicitly used in the $\Z^N$ case.  

The traditional setting of limit operator theory and band-dominated operators (on $\Z^N$ or discrete groups) pertains to many naturally occurring operators: for example geometric differential operators on universal covers of compact manifolds, and their discretizations.  See e.g. \cite[Introduction]{Lindner:2014aa} for a recent survey and a collection of references. Our general metric setting covers similar operators on discretizations of general open manifolds satisfying some reasonable `bounded geometry' conditions (for example, bounded sectional curvature and injectivity radius bounded below).

We prove that an analogue of Theorem \ref{thm:fred} holds in this metric setting, including removing the uniform boundedness condition, provided that the space $X$ has Yu's \emph{Property A} \cite[Section 2]{Yu:200ve}. Note that Property A is equivalent to exactness in the case when $X$ is a discrete group, and thus we recover Roe's theorem as a special case.  We also show that our Fredholmness criterion \emph{always} fails for spaces without Property A, and thus our results are in some sense best possible; this is related to the existence of non-compact \emph{ghost operators}. Finally, in the Hilbert space case, we explain that our framework can be phrased in terms of coarse groupoids and groupoid C*-algebras.\\

Let us outline the main ideas of our setup in general terms. The notions of band and band-dominated operators make sense for any (strongly discrete, bounded geometry) metric space $X$, as the definitions only use the metric on $X$. Indeed, band-dominated operators on such spaces have been extensively studied in the context of the coarse Baum--Connes conjecture. The first obstacle to be overcome is to generalise the notion of limit operator.
In the case of $\Z^N$, or generally a discrete group, limit operators are strong limits of (a subsequence of) shifts of a given operators; these are not available for a general space $X$. We propose the following construction.

Let $X$ be a discrete metric space and $A$ be a band-dominated operator on $\ell^p(X)$. Instead of sequences of points of $X$ tending to infinity, we shall follow Roe and associate limit operators of $A$ to the points $\omega$ of the Stone--\v{C}ech boundary of $X$, $\partial X=\beta X\setminus X$.  For each $\omega\in\partial X$ we construct a canonical \emph{limit space}, denoted $X(\omega)$, which captures the geometry of $X$ as one `looks towards infinity, in the direction of $\omega$'. The limit operator, $\Phi_\omega(A)$, of $A$ associated to $\omega$, will be a bounded operator on $\ell^p(X(\omega))$. We note that when $X$ is a discrete group, all limit spaces are (canonically isometric to) $X$ itself, and our notion of limit operators agrees with the original one.  

Our construction is related to that of Georgescu \cite{Georgescu:2011cr}, which also makes explicit use of ghost operators and property A.  Our construction has the advantages over Georgescu's that our limit operators are perhaps more concretely described, and that it works for operators on a large class of $\ell^p$ spaces, rather than just on Hilbert spaces; on the other hand, Georgescu's construction is more general than ours in that it works for non-discrete metric spaces.  We note that the techniques used in this paper and those used in \cite{Georgescu:2011cr} are quite different.

\subsection*{Outline of the paper}

After clarifying the details of the above construction in Sections \ref{sec:limitspaces} and \ref{sec:limitops}, we set out to prove the Fredholmness criterion, Theorem \ref{main}, our analogue of Theorem \ref{thm:fred}. The proof of the `easy' implication, which holds without any extra assumptions on the space $X$, is given in Section \ref{sec:mainthm}. The proof of another implication occupies Section \ref{sec:parametrices}; this one requires the assumption of Property A for $X$.  Similar results exist in the literature, but they use (complete) positivity in the Hilbert space context; the approach in this paper works in the general $\ell^p$-setting, but requires different, somewhat more technical arguments. Section \ref{sec:unifbdd} removes the uniform boundedness requirement in the Fredholmness criterion: we generalise the proof of Lindner and Seidel \cite{Lindner:2014aa} for $\Z^N$ and explain that their `main tool' (proved for $\Z^N$ directly) is again Property A in disguise. In Section \ref{sec:ghosts} we show necessity of Property A for Theorem \ref{main}: this amounts to showing that the observation of Roe \cite{Roe:2005lr} on ghost operators and symbol calculus works in our general setting. Finally, Appendix \ref{ultrafilterssec} collects the conventions on ultrafilters that we use in the paper, Appendix \ref{sec:comparison} compares our approach to limit operators to others in the literature, and Appendix \ref{sec:groupoids} outlines the alternative picture of our setup using coarse groupoids and their C*-algebras in the Hilbert space case.

\subsection*{Acknowledgements}

The second author would like to thank Martin Finn-Sell for some very useful conversations about ultralimits and `geometry at infinity', and John Roe for many relevant conversations on the theory of band-dominated (or finite propagation) operators.   He would also like to thank Jerry Kaminker for organising a mini-workshop connected to the subject of this paper, and the participants of this workshop for several useful comments.  Finally, he would like to thank the University of Southampton for its hospitality during part of the work on this paper.

\section{Preliminaries}\label{sec:preliminaries}

We will work with operators associated to metric spaces as in the following definition.

\begin{definition}\label{sdbg}
Let $(X,d)$ be a metric space.  For $x\in X$ and $r>0$ we denote by
$$
B_X(x;r):=\{y\in X~|~d(x,y)\leq r\}
$$
the closed ball about $x$ of radius $r$ (we will often drop the subscript `$_X$' if there is no ambiguity). 

A metric space $(X,d)$ is \emph{strongly discrete} if the collection
$$
\{d(x,y)\in \R~|~x,y\in X\}
$$
of values of the metric is a discrete subset of $\R$.  It is \emph{bounded geometry} if for every $r>0$ there exists $N=N(r)\in \N$ such that $|B(x;r)|\leq N$ for all $x\in X$.

We say `$X$ is a space' as shorthand for `$X$ is a strongly discrete, bounded geometry metric space' throughout the rest of this paper.
\end{definition}

We make the blanket assumption of strong discreteness for reasons of simplicity: removing it would make many of the arguments below significantly more technical.  Moreover, if $(X,d)$ is any bounded geometry metric space and we set $d'(x,y):=\lceil d(x,y)\rceil$ (here $\lceil\cdot \rceil$ is the ceiling function), then the metric space $(X,d')$ is a space in our sense.  The results of this paper apply directly to $(X,d')$, and as the metrics $d$ and $d'$ are \emph{coarsely equivalent} (roughly, give rise to the same large-scale geometry), they can easily be transferred back to the original metric space $(X,d)$; thus the strong discreteness  assumption does not really lose generality.  The assumption of bounded geometry, on the other hand, is substantial: it seems most of the results of this paper fail without it. 

Note that the class of strongly discrete, bounded geometry metric spaces in particular includes countable discrete groups (endowed with left invariant proper metrics, for instance a word metric if the group is finitely generated).

If $E$ is a Banach space, denote by $\mathcal{L}(E)$ the Banach algebra of bounded linear operators on $E$.  The following `matrix algebras' our are main object of study.

\begin{definition}\label{band}
Let $X$ be a space and $E$ a Banach space.  Let $A=(A_{xy})_{x,y\in X}$ be an $X$-by-$X$ indexed matrix with values in $\mathcal{L}(E)$.  The matrix $A$ is a \emph{band operator (on $X$, with coefficients in $\mathcal{L}(E)$)} if
\begin{enumerate}
\item the norms $\|A_{xy}\|$ are uniformly bounded;
\item the \emph{propagation} of $A$ defined by
$$
\prop(A):=\sup\{d(x,y)~|~A_{xy}\neq 0\}
$$
is finite.
\end{enumerate}
Let $\band$ denote the collection of all band operators on $X$ with coefficients in $E$; the bounded geometry condition on $X$ implies that the usual matrix operations and the algebra structure of $\mathcal{L}(E)$ make $\band$ into an algebra.
\end{definition}

\begin{examples}\label{bandex}
Let $X$ be a space, and $E$ a Banach space.  The following two classes of operators are the basic examples of band operators.
\begin{enumerate}
\item Let $f:X\to \mathcal{L}(E)$ be a bounded function (in other words, an element of $l^\infty(X,\mathcal{L}(E))$. Then the diagonal matrix defined by 
$$
A_{xy}=\begin{cases}f(x) & x=y \\ 0 & \text{otherwise} \end{cases}
$$
is a band-operator of propagation $0$. We refer to these as \emph{multiplication operators}, as they act as such in the natural representation (described in Corollary \ref{boundrep} below).

An important special case occurs when $f$ is just a bounded complex-valued function on $X$, identified with the corresponding function on $X$ with values in the scalar multiples of the identity operator $1_E\in \mathcal{L}(E)$.  We identify scalar-valued functions on $X$ with elements of $\band$ in this way without further comment.  In particular, if $Y$ is a subset of $X$, we denote by $P_Y$ the idempotent element of $\band$ corresponding to the characteristic function of $Y$. Note that the operator $P_{\{x\}}AP_{\{y\}}$ identifies naturally with the matrix entry $A_{xy}$.
\item Let $D,R$ be subsets of $X$, and let $t:D\to R$ be a bijection such that $\sup_{x\in D} d(x,t(x))$ is finite (such a function $t$ is called a \emph{partial translation} on $X$).  Define an $X$-by-$X$ indexed matrix by
$$
V_{yx}=
\begin{cases}1_E & x\in D \text{ and } y=t(x)\\
0 & \text{otherwise.} \end{cases}
$$
Then $V$ is a band-operator, called a \emph{partial translation operator}. 
\end{enumerate}
\end{examples}

In fact the two classes of operators above generate $\band$ as an algebra in a precise sense.  Versions of the following lemma are very well-known.

\begin{lemma}\label{decomp}
Let $A$ be an element of $\band$ with propagation at most $r$.  Let $N=\sup_{x\in X}|B(x;r)|$.  Then there exist multiplication operators $f_1,...,f_N\in l^\infty(X,\mathcal{L}(E))$ such that $\|f_k\|\leq \sup_{x,y}\|A_{xy}\|$ for $k\in \{1,...,N\}$ and partial translation operators $V_1,...,V_N$ of propagation at most $r$ such that  
$$
A=\sum_{k=1}^N f_kV_k.
$$
\end{lemma}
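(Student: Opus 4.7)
My plan is to translate the decomposition into an edge-coloring problem on a bipartite graph and apply an infinite version of K\"onig's edge-coloring theorem.

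Form the bipartite graph $G$ with both parts equal to copies of $X$, where $(x,y) \in X \times X$ is an edge whenever $d(x,y) \leq r$. By bounded geometry, every vertex of $G$ has degree at most $N = \sup_{x \in X}|B(x;r)|$. I would like to partition the edge set of $G$ into $N$ matchings (color classes) $M_1,\dots,M_N$. For finite bipartite graphs this is K\"onig's edge-coloring theorem; for the general case one extends by a standard compactness argument, e.g.\ apply the finite K\"onig theorem to an exhaustion of $G$ by finite subgraphs and extract a consistent coloring using an ultrafilter on the directed set of finite subsets of $X$ (or equivalently Tychonoff / De~Bruijn--Erd\H{o}s). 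This is the only nontrivial step; everything else is formal bookkeeping.

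Each matching $M_k$ has the property that no two of its edges share an endpoint on either side, so it is precisely the graph $\{(t_k(y),y) : y \in D_k\}$ of a bijection $t_k : D_k \to R_k$ between subsets $D_k, R_k \subseteq X$. Since every edge of $G$ satisfies $d(x,y) \leq r$, we have $\sup_{y \in D_k} d(y, t_k(y)) \leq r$, so $t_k$ is a partial translation of propagation at most $r$. Let $V_k$ be the corresponding partial translation operator as in Examples~\ref{bandex}(2).

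Now define the multiplication operator $f_k \in \ell^\infty(X,\mathcal{L}(E))$ by
\[
f_k(x) := \begin{cases} A_{x,y} & \text{if } x = t_k(y) \text{ for some (necessarily unique) } y \in D_k, \\ 0 & \text{otherwise.} \end{cases}
\]
Since each $f_k(x)$ is either zero or some matrix entry $A_{x,y}$, we immediately have $\|f_k\| \leq \sup_{x,y}\|A_{xy}\|$. Computing the matrix of $f_k V_k$ from the definitions, $(f_k V_k)_{x,y} = f_k(x)(V_k)_{x,y}$, which equals $A_{x,y}$ when $(x,y) \in M_k$ and vanishes otherwise. Since the $M_k$ partition exactly the pairs $(x,y)$ with $d(x,y) \leq r$, and $A_{x,y} = 0$ whenever $d(x,y) > r$ by the propagation hypothesis, summing over $k$ recovers $A$ entry-by-entry:
\[
\sum_{k=1}^N (f_k V_k)_{x,y} = \sum_{k : (x,y) \in M_k} A_{x,y} = A_{x,y}.
\]
This gives the desired decomposition $A = \sum_{k=1}^N f_k V_k$.
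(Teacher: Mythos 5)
Your proof is correct, and its overall architecture matches the paper's: decompose the set $\{(x,y)\in X\times X : d(x,y)\le r\}$ into the graphs of at most $N$ partial translations, take $V_k$ to be the corresponding partial translation operators, and read $f_k$ off from the matrix entries of $A$ along the $k$-th graph (your choice to index $f_k$ by the \emph{range} of $t_k$ is the right one for the product $f_kV_k$, and your final entrywise verification, including the case $d(x,y)>r$, is complete). Where you genuinely diverge is in how the decomposition is obtained. The paper builds the $t_k$ greedily, taking each $t_{k+1}$ to be a maximal partial translation of propagation at most $r$ whose graph is disjoint from those of $t_1,\dots,t_k$, and then argues by maximality and pigeonhole that the process stops after $N$ steps. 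You instead invoke K\"onig's edge-colouring theorem for finite bipartite graphs of maximum degree $N$ together with a compactness/ultrafilter argument to colour the infinite graph. The paper's route is self-contained and avoids citing a classical theorem; yours costs those two appeals (both choice-flavoured, but so is the paper's use of maximal partial translations) and in exchange delivers the constant $N$ cleanly. That is a real gain: for a bipartite graph of maximum degree $N$, an edge surviving the removal of a maximal matching $M_j$ only tells you that \emph{one} of its endpoints is covered by $M_j$, so iterated maximal matchings can in general require up to $2N-1$ rounds (a $6$-cycle with $N=2$ already needs $3$), and the greedy argument read literally only yields $2N-1$ partial translations. Since the lemma is only ever used for norm estimates where the exact constant is immaterial, this changes nothing downstream, but your K\"onig-based argument is the one that establishes the bound $N$ exactly as stated.
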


\begin{proof}
Inductively define partial translations $t_1,t_2,...$ as follows.  Let $t_0$ be the empty partial translation.  Having defined $t_1,...,t_k$, let $t_{k+1}$ be any partial translation such that $d(x,t_{k+1}(x))\leq r$ for all $x$ in the domain of $t_{k+1}$, such that the graph of $t_{k+1}$ is disjoint from those of $t_1,...,t_k$, and such that the graph of $t_{k+1}$ is maximal with respect to these conditions.  We claim that $t_k$ is empty for all $k>N$.  Indeed, if not, then there exists $k>N$ and a point $x$ in the domain of $t_k$.  Then maximality of $t_1,...,t_{k-1}$ implies that $x$ is in the domain of all of these partial translations and so 
$$
t_1(x),....,t_k(x)
$$
are distinct points in $B(x;r)$, which contradicts the definition of $N$.  

For $k=1,...,N$, then, set $V_k$ to the partial translation operator corresponding to $t_k$, and define 
$$
f_k:X\to \mathcal{L}(E),~~~f_k(x)=\left\{\begin{array}{ll} A_{t_k(x)x} & x\in \text{domain}(t_k) \\ 0 & \text{otherwise} \end{array}\right..
$$
It is not difficult to check that these operators have the desired properties.
\end{proof}

If $X$ is a space, $E$ a Banach space and $p\in(1,\infty)$, we shall use the notation $\ell^p_E(X):=\ell^p(X,E)$ for the Banach space of $p$-summable functions from $X$ to $E$.

\begin{corollary}\label{boundrep}
Let $X$ be a space, $E$ a Banach space, and $p$ a number in $(1,\infty)$. Let $A\in \band$ have propagation at most $r$.  

Then the the operator on $\ell^p_E(X)$ defined by matrix multiplication by $A$ is bounded, with norm at most 
$$
\sup_{x,y}\|A_{xy}\|\cdot \sup_{x\in X}|B(x;r)|.
$$
\end{corollary}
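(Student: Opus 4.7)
The plan is to reduce everything to Lemma \ref{decomp}. Applying that lemma with $N=\sup_{x\in X}|B(x;r)|$, write
$$
A = \sum_{k=1}^N f_k V_k,
$$
where each $f_k\in \ell^\infty(X,\mathcal{L}(E))$ satisfies $\|f_k\|_\infty \leq \sup_{x,y}\|A_{xy}\|$ and each $V_k$ is a partial translation operator attached to a partial translation $t_k \colon D_k \to R_k$. Then it suffices to show that each $f_k$ acts on $\ell^p_E(X)$ with norm at most $\sup_{x,y}\|A_{xy}\|$, and each $V_k$ acts as a contraction; the corollary will then follow from the triangle inequality.

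For the multiplication operator $f_k$, pointwise action on each fibre gives
$$
\|f_k\xi\|_p^p = \sum_{x\in X} \|f_k(x)\xi(x)\|^p \leq \|f_k\|_\infty^p \cdot \|\xi\|_p^p,
$$
so $\|f_k\|_{\mathcal{L}(\ell^p_E(X))} \leq \|f_k\|_\infty \leq \sup_{x,y}\|A_{xy}\|$. For the partial translation operator $V_k$, by definition $(V_k\xi)(y) = \xi(t_k^{-1}(y))$ for $y\in R_k$ and zero elsewhere; since $t_k$ is a bijection from $D_k$ to $R_k$, the values $\{t_k(x)\}_{x\in D_k}$ are pairwise distinct, whence
$$
\|V_k\xi\|_p^p = \sum_{x\in D_k} \|\xi(x)\|^p \leq \|\xi\|_p^p,
$$
giving $\|V_k\|\leq 1$.

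Combining, one obtains
$$
\|A\|_{\mathcal{L}(\ell^p_E(X))} \leq \sum_{k=1}^N \|f_k\|\cdot\|V_k\| \leq N \cdot \sup_{x,y}\|A_{xy}\| = \sup_{x\in X}|B(x;r)| \cdot \sup_{x,y}\|A_{xy}\|,
$$
as required. One should also briefly note that the formal matrix product $(A\xi)(x)=\sum_y A_{xy}\xi(y)$ is well defined: the propagation condition forces the sum to range over the finite set $B(x;r)$, so there are no convergence issues at the pointwise level, and the above estimate shows the resulting function lies in $\ell^p_E(X)$. There is no real obstacle: Lemma \ref{decomp} is doing all the geometric work, reducing the bound to the two elementary estimates above.
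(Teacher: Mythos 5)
Your proof is correct and follows exactly the paper's route: decompose $A=\sum_{k=1}^N f_kV_k$ via Lemma \ref{decomp}, bound each multiplication operator by $\sup_{x,y}\|A_{xy}\|$ and each partial translation operator by $1$, and sum. The extra remarks on well-definedness of the matrix product are harmless but not needed beyond what the paper records.
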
  

\begin{proof}
Writing $A$ as in Lemma \ref{decomp}, the operators $f_k$ have norm at most $\sup_{x,y}\|A_{xy}\|$ and the operators $V_k$ have norm one (or zero, if the corresponding partial translation has empty domain).
\end{proof}

Now, using the above corollary we may represent $\band$ by bounded operators on each Banach space $\ell^p_E(X)$ by matrix multiplication.  It is easy to see this representation is faithful, and we will usually identify $\C[X;E]$ with its image in $\mathcal{L}(\ell^p_E(X))$ in what follows. 

\begin{definition}\label{banddominated}
Let $X$ be a space, $E$ a Banach space, and $p$ a number in $(1,\infty)$.  The closure of $\band$ in its matricial representation on $\ell^p_E(X)$ is denoted $\AX$.  Elements of $\AX$ are called \emph{band-dominated} operators on $\ell^p_E(X)$.
\end{definition}

\begin{remark}
If $E$ is a Hilbert space (and $p=2$), then the Banach algebra $\AXtwoE$ is in fact a C*-algebra. Moreover if $E=\C$, then this C*-algebra is usually called the \emph{translation C*-algebra} or the \emph{uniform Roe algebra} of $X$ in the fields of coarse geometry and operator algebras, and denoted $C^*_u(X)$.  

On the other hand, If $E$ is an infinite dimensional Hilbert space, then the closure of the band operators in on $\ell^2_E(X)$ \textsl{all of whose matrix entries are compact operators} is called the \emph{Roe algebra} of $X$ in these areas, and denoted $C^*(X)$.  Hence in this case $\AXtwoE$ contains, but is strictly larger than, the Roe algebra of $X$. 
\end{remark}

\begin{definition}\label{Xcompact}
Let $X$ be a space, $E$ a Banach space, and $p$ a number in $(1,\infty)$.  Recall that if $Y$ is a subset of $X$, then $P_Y$ denotes the (norm one) idempotent operator on $\ell^p_E(X)$ corresponding to the characteristic function of $Y$.

A bounded operator $K$ on $\ell^p_E(X)$ is \emph{\pcom}~if for any $\epsilon>0$ there exists a finite subset $F$ of $X$ such that 
$$
\|K-KP_F\|<\epsilon \text{ and } \|K-P_FK\|<\epsilon.
$$
Write $\KX$ for the collection of all \pcom~operators on $\ell^p_E(X)$.
\end{definition}

\begin{remark}\label{pcomrem}
If $E$ is finite dimensional, the \pcom~operators on $\ell^p_E(X)$ are exactly those that can be approximated in norm by finite rank operators.  Any such $\ell^p_E(X)$ has a Schauder basis, and so has the approximation property.  Thus the \pcom~operators are exactly the compact operators in this case.  Many of the results of this paper are easier to digest (but still non-trivial) in the case that $E$ is finite dimensional, or even when $E=\C$, and the reader is encouraged to consider this case.
\end{remark}

\begin{lemma}\label{comideal}
The collection $\KX$ is a closed two-sided ideal in the algebra $\AX$ of band-dominated operators.  
\end{lemma}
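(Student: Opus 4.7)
My plan is to verify three separate properties of $\KX$: closure under linear combinations, norm-closure in $\mathcal{L}(\ell^p_E(X))$, and absorption under left and right multiplication by elements of $\AX$. Linearity is straightforward: for $K_1,K_2 \in \KX$ and $\epsilon>0$, I would approximate each $K_i$ to precision $\epsilon/2$ by a finite set $F_i$ and use $F:=F_1\cup F_2$; the inclusions $F_i\subseteq F$ give $(I-P_{F_i})(I-P_F)=I-P_F$, and combined with $\|I-P_F\|=1$ on $\ell^p$ this transports the estimates from $F_i$ to $F$. Norm-closure follows from the standard $\epsilon/3$ argument, using $\|P_F\|\leq 1$.

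For the ideal property, my strategy is a density reduction. The set
\[
\{A\in\mathcal{L}(\ell^p_E(X))~|~A\KX\subseteq \KX \text{ and } \KX A\subseteq \KX\}
\]
is itself norm-closed, using the norm-closure of $\KX$ just established. Since by Definition \ref{banddominated} $\AX$ is the closure of $\band$, it therefore suffices to show that $A\KX,\KX A\subseteq \KX$ for each band operator $A$.

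So fix $A\in\band$ of propagation at most $r$, and $K\in\KX$. Given $\epsilon>0$, choose a finite set $F_0\subseteq X$ with $\max(\|K-KP_{F_0}\|,\|K-P_{F_0}K\|)<\epsilon/(\|A\|+1)$, and set $F:=B(F_0;r)$; this is finite by the bounded geometry hypothesis, and contains $F_0$. The key geometric observation, verified by direct inspection of matrix entries, is that
\[
(I-P_F)AP_{F_0}=0 \quad \text{and} \quad P_{F_0}A(I-P_F)=0,
\]
because $x\notin F$ and $y\in F_0$ (or vice versa) forces $d(x,y)>r$ and hence $A_{xy}=0$. Combined with $(I-P_{F_0})(I-P_F)=I-P_F$, this lets me write $(I-P_F)AK=(I-P_F)A(I-P_{F_0})K$ and $AK(I-P_F)=AK(I-P_{F_0})(I-P_F)$, each of norm at most $\|A\|\cdot\epsilon/(\|A\|+1)<\epsilon$. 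Thus $F$ witnesses $AK\in\KX$, and the symmetric computation handles $KA$.

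The only substantive point is the geometric identity that a band operator of propagation $\leq r$ cannot connect $F_0$ to the complement of $B(F_0;r)$; everything else is elementary Banach-algebraic bookkeeping, with bounded geometry invoked exactly once to ensure $B(F_0;r)$ remains finite.
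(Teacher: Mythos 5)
Your argument for the ideal property is correct and is essentially the paper's own: the reduction to band operators by density (using that $\KX$ is norm-closed), followed by the observation that a band operator of propagation at most $r$ has vanishing matrix entries between $F_0$ and the complement of $B(F_0;r)$, is exactly the paper's choice of $G=\{x\in X~|~d(x,F)\leq \prop(A)\}$ with the identities $P_FAP_G=P_FA$ and $P_GAP_F=AP_F$ rewritten as $(I-P_F)AP_{F_0}=0$ and $P_{F_0}A(I-P_F)=0$. The bookkeeping with $(I-P_{F_0})(I-P_F)=I-P_F$ is fine, as are the linearity and norm-closure steps.

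There is, however, one omitted step: you never show that $\KX\subseteq\AX$. You establish that $\KX$ is a closed subspace of $\mathcal{L}(\ell^p_E(X))$ which absorbs $\AX$ under multiplication on both sides, but Definition \ref{Xcompact} defines \pcom\ operators as arbitrary bounded operators on $\ell^p_E(X)$, so the containment in $\AX$ is genuinely part of the assertion ``ideal \emph{in} $\AX$'' and does not follow from what you prove. The fix is short and is the first thing the paper does: if $F$ is finite with $\|P_FK-K\|<\epsilon$ and $\|KP_F-K\|<\epsilon$, then $\|P_FKP_F-K\|<2\epsilon$, and $P_FKP_F$ is a band operator (propagation at most $\diam(F)$, entries bounded by $\|K\|$), so $K$ is band-dominated. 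You should add this line; with it, your proof is complete.
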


\begin{proof}
Let $K$ be \pcom.  If $F\subseteq X$ is finite and such that $\|P_FK-K\|<\epsilon$ and $\|KP_F-K\|<\epsilon$, then $\|P_FKP_F-K\|<2\epsilon$.  As each operator $P_FKP_F$ is a band operator, this shows that $K$ is band-dominated.   The collection of \pcom~operators is norm closed as the norm of any $P_F$ (where $F$ is non-empty) is one.  Finally, note that if $A$ is a band-operator, then for any finite subset $F$ of $X$, there exists a finite subset $G$ of $X$ such that $P_FAP_G=P_FA$ and $P_GAP_F=AP_F$: indeed, we may just take
$$
G=\{x\in X~|~d(x,F)\leq \prop(A)\}.
$$
The fact that $\KX$ is an ideal in $\AX$ follows from this.  
\end{proof}

\begin{definition}\label{Xfredholm}
Let $X$ be a space, $E$ a Banach space, and $p$ a number in $(1,\infty)$.  A band-dominated operator $A$ on $\ell^p_E(X)$ is \emph{\pfred}~if there exists a bounded operator $B$ on $\ell^p_E(X)$ such that $AB-1$ and $BA-1$ are in $\KX$, i.e.\ $A$ is invertible modulo the \pcom~operators.
\end{definition}

\begin{remark}
If $E$ is finite dimensional then Remark \ref{pcomrem} and Atkinson's theorem together imply that the \pfred~operators are precisely the band-dominated operators that are Fredholm in the usual sense.
\end{remark}

The central goal of this paper is to derive a criterion determining when a band-dominated operator is \pfred. It turns out this is intimately connected to the geometry at infinity of $X$: in the next section, we will discuss the necessary preliminaries from metric space theory.

\section{Limit spaces}\label{sec:limitspaces}

Throughout this section, $X$ is a space in the sense of Definition \ref{sdbg}.  We will freely use the terminology of ultrafilters on $X$ and the associated Stone-\v{C}ech compactification $\beta X$ and boundary $\partial X$: this material is recalled for the reader's convenience in Appendix \ref{ultrafilterssec}.

The following definition has already appeared in Example \ref{bandex} above, but we isolate it here as it is particularly important for this section. 

\begin{definition}\label{pt}
A function $t:D\to R$ with domain and range subsets $D$, $R$ of $X$ is called a \emph{partial translation (on $X$)} if it is a bijection from $D$ to $R$, and if 
$$
\sup_{x\in D}d(x,t(x))
$$
is finite.  
\end{definition}

\begin{definition}\label{comp}
Fix an ultrafilter $\omega\in \beta X$.  A partial translation $t:D\to R$ on $X$ is \emph{compatible} with $\omega$ if $\omega(D)=1$ (i.e.\ $\omega$ is in the closure $\overline{D}$).  If $\omega$ is compatible with $t$, then considering $t$ as a function $t:D\to \beta X$ we may use Definition \ref{limdef} to define
$$
t(\omega):=\lim_\omega t\in \beta X.
$$

For a fixed ultrafilter $\omega\in \beta X$, an ultrafilter $\alpha\in\beta X$ is \emph{compatible} with $\omega$ if there exists a partial translation $t$ which is compatible with $\omega$, and which satisfies $t(\omega)=\alpha$.
\end{definition}

\begin{remark}\label{rem:comp-alt}
If we unravel the definition of $\lim_\omega$ above, we arrive at the following alternative description of ultrafilters $\alpha$ compatible with a given $\omega$.

An ultrafilter $\alpha\in \beta X$ is compatible with $\omega\in\beta X$, if there exists a partial translation $t:D\to R$ such that $\omega(D)=1$ and such that for any $S\subseteq X$ we have $\omega(S)=1$ iff $\alpha(t^{-1}(S\cap R))=1$.

It is easy to see from this that the relation of compatibility on elements of $\beta X$ is symmetric and reflexive.  If moreover $s:D_s\to R_s$ shows $\alpha$ compatible to $\beta$, and $t:D_t\to R_t$ shows that $\beta$ is compatible to $\gamma$, then $\beta(D_t)=\beta(R_s)=1$ and so $\beta(D_t\cap R_s)=1$; hence $\alpha(s^{-1}(D_t\cap R_s))=1$ and thus
$$
t\circ s|_{s^{-1}(D_t\cap R_s)}:s^{-1}(D_t\cap R_s)\to t(R_s\cap D_t)
$$ 
shows that $\alpha$ is compatible to $\gamma$.  Thus compatibility is an equivalence relation.  

Note that if $\omega\in X\subseteq \beta X$ (i.e. $\omega$ is a principal ultrafilter), then the collection of ultrafilters compatible with $\omega$ consists precisely of all principal ultrafilters on $X$, i.e.\ all points of $X$ itself.
\end{remark}

We will now show an essential uniqueness statement: if $\omega$ is an ultrafilter then any two partial translations $s,t$ that are compatible with $\omega$ and such that $s(\omega)=t(\omega)$ are essentially the same, where `essentially' means `off a set of $\omega$-measure zero'.

\begin{lemma}\label{esssame}
Let $X$ be a space, and $\omega$ be an ultrafilter on $X$.  Say $t:D_t\to R_t$ and $s:D_s\to R_s$ are two partial translations compatible with $\omega$ such that $s(\omega)=t(\omega)$.  Then if
$$
D:=\{x\in D_t\cap D_s~|~t(x)=s(x)\}
$$
we have that $\omega(D)=1$.
\end{lemma}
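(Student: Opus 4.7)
My plan is to argue by contradiction. First I would set $D_0 := D_t \cap D_s$ (so $\omega(D_0)=1$ by the ultrafilter property), let $E := \{x \in D_0 : t(x) \neq s(x)\}$, and suppose $\omega(E) = 1$; the goal is to contradict $t(\omega) = s(\omega)$.

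The key combinatorial step is a graph-coloring argument on $E$. I would define a graph $G$ with vertex set $E$ in which two distinct $x, y \in E$ are joined by an edge whenever $t(x) = s(y)$ or $s(x) = t(y)$. Because $t$ and $s$ are bijections on their respective domains, each vertex of $G$ has degree at most $2$: for fixed $x$, the condition $t(x) = s(y)$ determines at most one candidate $y$, namely $s^{-1}(t(x))$, and the condition $s(x) = t(y)$ determines at most one candidate $y = t^{-1}(s(x))$. A graph of maximum degree $2$ decomposes as a disjoint union of (finite or infinite) paths and finite cycles, and any such graph is properly $3$-colorable; take $E = E_1 \sqcup E_2 \sqcup E_3$ to be a corresponding partition into independent sets.

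By construction, for any $i$ and any $x, y \in E_i$ we have $t(x) \neq s(y)$: if $x \neq y$ this is the independent-set condition, while if $x = y$ this is the defining condition of $E$. Equivalently, $t(E_i) \cap s(E_i) = \emptyset$ for each $i$. The ultrafilter property then gives some $i$ with $\omega(E_i) = 1$. Setting $\alpha := t(\omega) = s(\omega)$ and unpacking the characterization in Remark \ref{rem:comp-alt} with $S = t(E_i) \subseteq R_t$, I would get
\[
\alpha(t(E_i)) \;=\; \omega\bigl(t^{-1}(t(E_i) \cap R_t)\bigr) \;=\; \omega(E_i) \;=\; 1,
\]
and symmetrically $\alpha(s(E_i)) = 1$. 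The disjointness of $t(E_i)$ and $s(E_i)$ then violates the ultrafilter property for $\alpha$, yielding the desired contradiction.

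The main obstacle, conceptually, is finding the right graph-theoretic packaging of the problem so that the bijectivity of $t$ and $s$ becomes usable; once the conflict graph is identified and $3$-coloured, the remainder is essentially bookkeeping with the compatibility relation. It is worth noting that the argument does not require bounded geometry of $X$ or any quantitative bound on the propagations of $t$ and $s$---only that they are partial injections.
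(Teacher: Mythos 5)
Your proof is correct and follows essentially the same route as the paper's: both argue by contradiction, split the set where $t\neq s$ into three pieces $E_i$ with $t(E_i)\cap s(E_i)=\varnothing$, concentrate $\omega$ on one of them, and then contradict $t(\omega)=s(\omega)$ via $\alpha(t(E_i))=\alpha(s(E_i))=1$. Your $3$-colouring of the degree-$\leq 2$ conflict graph is the paper's Lemma \ref{dislem} in disguise (the edges of your graph trace out exactly the orbits of the partial bijection $s^{-1}\circ t$), though your packaging has the small advantage of not requiring $t$ and $s$ to share a common codomain.
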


We first need a combinatorial lemma (which is probably very well-known).

\begin{lemma}\label{dislem}
Let $B$ and $C$ be sets.  Let $s,t:B\to C$ be bijections such that for all $a\in B$, $s(a)\neq t(a)$.  Then there exists a decomposition of $B$ into three disjoint subsets
$$
B=B_0\sqcup B_1\sqcup B_2
$$
such that for all $i\in \{0,1,2\}$, $s(B_i)\cap t(B_i)=\varnothing$.
\end{lemma}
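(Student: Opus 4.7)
The plan is to reformulate the desired partition condition in terms of a single fixed-point-free bijection of $B$, and then to solve the resulting graph-coloring problem orbit by orbit.

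First I would set $\phi := t^{-1} \circ s$, a bijection from $B$ to itself. The hypothesis that $s(a)\neq t(a)$ for every $a\in B$ says precisely that $\phi$ has no fixed points. Moreover, for $a,b\in B$ we have $s(a)=t(b)$ if and only if $b = \phi(a)$, so the requirement $s(B_i)\cap t(B_i)=\varnothing$ is equivalent to $\phi(B_i)\cap B_i=\varnothing$. Thus the lemma reduces to the combinatorial statement: \emph{any fixed-point-free bijection $\phi$ of a set $B$ admits a partition $B = B_0\sqcup B_1\sqcup B_2$ with $\phi(B_i)\cap B_i=\varnothing$ for each $i$.}

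Next I would decompose $B$ into the orbits of $\phi$. Since $\phi$ is a bijection without fixed points, each orbit is either a finite cycle $\{a,\phi(a),\dots,\phi^{n-1}(a)\}$ of some length $n\geq 2$, or a two-sided infinite orbit isomorphic as a $\phi$-set to $\Z$. Since the defining condition $\phi(B_i)\cap B_i=\varnothing$ is local to each orbit, it suffices to color the elements of each orbit by $\{0,1,2\}$ so that $\phi$-consecutive elements receive different colors, and then let $B_i$ be the union over all orbits of the elements colored $i$.

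Finally I would carry out the coloring in each case. For a $\Z$-orbit, picking any base point (invoking choice across orbits) and coloring by parity gives a proper $2$-coloring. For a finite cycle of even length, alternating two colors works. For a finite cycle of odd length $n\geq 3$, labelling the vertices as $a_0,a_1=\phi(a_0),\dots,a_{n-1}=\phi^{n-1}(a_0)$ and assigning colors $0,1,0,1,\dots,0,1,2$ handles the single odd edge (the wrap-around $a_{n-1}\to a_0$) using the third color. No deeper difficulty arises; the only mild book-keeping is the odd-cycle case, and the only non-constructive ingredient is choosing a base point in each infinite orbit.
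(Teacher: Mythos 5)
Your proposal is correct and follows essentially the same route as the paper: reduce to a single fixed-point-free bijection (the paper uses $s^{-1}\circ t$, you use $t^{-1}\circ s$, which is immaterial), decompose into orbits, and $3$-colour each orbit so that consecutive elements differ, using the third colour only to patch the wrap-around edge of odd cycles. The reformulation $s(B_i)\cap t(B_i)=\varnothing \iff \phi(B_i)\cap B_i=\varnothing$ and the case analysis are both verified correctly.
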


The case that $B=C=\{1,2,3\}$, $s$ is the identity, and $t$ is a cyclic permutation, shows that one cannot get away with less than three subsets.

\begin{proof}
Replacing $t$ and $s$ with $s^{-1}\circ t$ and $s^{-1}\circ s$, it suffices to show that if $B$ is a set and $t:B\to B$ a bijection such that $t(b)\neq b$ for all $b\in B$, then there exists a decomposition $B=B_1\sqcup B_2\sqcup B_3$ such that $t(B_i)\cap B_i=\varnothing$ for all $i\in \{0,1,2\}$.  We will now prove this. 

As $t$ is now a bijection from $B$ to itself, it gives rise to an action of $\Z$ (thought of as generated by $t$) on $B$ which partitions $B$ into orbits.  As $t(b)\neq b$ for all $b\in B$ there are no single point orbits, and so each orbit has one of the following forms.
\begin{enumerate}[(1)]
\item $\{...,t^{-2}(b),t^{-1}(b),b=t^0(b),t(b),t^2(b),...\}$ (going on infinitely in both directions) for some $b\in B$.
\item $\{b=t^0(b),t(b),...,t^n(b)\}$ for some $n\geq 1$ and $b\in B$ such that $t^{n+1}(b)=b$.
\end{enumerate}
Define the subsets $B_0$, $B_1$ and $B_2$ as follows.  For each orbit, fix once and for all a representation of the type above.  For an orbit of type (2) with $n$ even and $i=n$, put $t^i(b)$ into $B_2$.  In all other cases, put $t^i(b)$ into $B_{i\text{ mod } 2}$ (where $i \text{ mod }2$ is always construed as $0$ or $1$).  A routine case-by-case analysis shows that this works.
\end{proof}

\begin{proof}[Proof of Lemma \ref{esssame}]
Define
$$
C:=(D_s\cap D_t)\setminus D=\{x\in D_s\cap D_t~|~t(x)\neq s(x)\}.
$$  
Noting that as $\omega(D_s)=\omega(D_t)=1$, we have $\omega(D_s\cap D_t)=1$.  Hence if we assume for contradiction that $\omega(D)=0$, then $\omega(C)=1$. Lemma \ref{dislem} implies that we may decompose $C$ into three disjoint subsets, $C=C_0\sqcup C_1\sqcup C_2$, such that for $i\in\{1,2,3\}$, 
\begin{equation}\label{disjoint}
t(C_i)\cap s(C_i)=\varnothing.
\end{equation}
We must have $\omega(C_i)=1$ for some $i\in \{0,1,2\}$; say without loss of generality $\omega(C_1)=1$.  Write $\alpha=\lim_\omega t$.  Then 
$$
\alpha=\lim_\omega s|_{C_1}=\lim_\omega t|_{C_1},
$$
whence, by definition of $\omega$-limits, $\alpha$ is in the closures of both $t(C_1)$ and $s(C_1)$, i.e.\
$$
\alpha(t(C_1))=\alpha(s(C_1))=1.
$$
This contradicts line \eqref{disjoint}, so $\omega(D)=1$ as claimed.  
\end{proof}

\begin{definition}\label{comfam}
Fix an ultrafilter $\omega$ on $X$.  Write $X(\omega)$ for the collection of all ultrafilters on $X$ that are compatible with $\omega$.  

A \emph{compatible family} for $\omega$ is a collection of partial translations $\{t_\alpha\}_{\alpha\in X(\omega)}$ indexed by $X(\omega)$ such that each $t_\alpha$ is compatible with $\omega$ and satisfies $t_\alpha(\omega)=\alpha$.   
\end{definition}

The set $X(\omega)$ should be thought of as the collection of ultrafilters that are at a `finite distance' from $\omega$. Our next goal is to make this more precise by equipping $X(\omega)$ with a canonical metric.

\begin{proposition}\label{ulim}
Fix an ultrafilter $\omega\in \beta X$, and a compatible family $\{t_\alpha:D_\alpha\to R_\alpha\}_{\alpha\in X(\omega)}$. 

Define a function $d_\omega:X(\omega)\times X(\omega)\to [0,\infty)$ by the formula.
$$
d_\omega(\alpha,\beta)=\lim_{x\to \omega} d(t_\alpha(x),t_\beta(x)).
$$

Then $d_\omega$ is a metric on $X(\omega)$ that does not depend on the choice of the compatible family $\{t_\alpha\}$.  Moreover, 
$$
\{d_\omega(\alpha,\beta)~|~\alpha,\beta \in X(\omega)\}\subseteq \{d(x,y)~|~x,y\in X\}
$$
and 
$$
\max_{\alpha\in X(\omega)}|B_{X(\omega)}(\alpha;r)|\leq \max_{x\in X}|B_X(x;r)|,
$$
whence in particular the metric space $(X(\omega),d_\omega)$ is strongly discrete and of bounded geometry.  
\end{proposition}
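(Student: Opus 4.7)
My plan is to verify the four assertions---existence of the limit (together with its range), the metric axioms, independence of the compatible family, and inherited discreteness and bounded geometry---by leveraging strong discreteness of $X$ in combination with Lemma \ref{esssame}. For existence: given $\alpha,\beta \in X(\omega)$, both $t_\alpha$ and $t_\beta$ are defined on the $\omega$-large set $D := D_\alpha \cap D_\beta$, and on $D$ the function $x \mapsto d(t_\alpha(x), t_\beta(x))$ is bounded above by $\sup_{x \in D_\alpha} d(x, t_\alpha(x)) + \sup_{x \in D_\beta} d(x, t_\beta(x)) < \infty$. Its values lie in $\{d(x,y) : x,y \in X\}$, which is discrete by hypothesis, so the function takes only finitely many values on $D$. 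Since $\omega$ is an ultrafilter, exactly one of these values is attained on an $\omega$-large subset of $D$, and that value is $d_\omega(\alpha, \beta)$; this simultaneously gives existence of the limit and the inclusion $\{d_\omega(\alpha, \beta)\} \subseteq \{d(x,y)\}$.

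For the metric axioms and independence: independence of the compatible family follows directly from Lemma \ref{esssame}, since for any other family $\{s_\alpha\}$ one has $s_\alpha = t_\alpha$ on an $\omega$-large set for each $\alpha$, so the pointwise distance functions agree $\omega$-almost everywhere. Symmetry is trivial, and the pointwise triangle inequality on $D_\alpha \cap D_\beta \cap D_\gamma$ descends to $d_\omega$ via the $\omega$-limit. For non-degeneracy, the forward direction $\alpha = \beta \Rightarrow d_\omega(\alpha,\beta) = 0$ is Lemma \ref{esssame} applied to $t_\alpha, t_\beta$, and the converse is immediate: if $d_\omega(\alpha,\beta) = 0$ then $t_\alpha(x) = t_\beta(x)$ on an $\omega$-large set, whence $\alpha = t_\alpha(\omega) = t_\beta(\omega) = \beta$ by the definition of $\omega$-limit.

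For bounded geometry: given distinct $\beta_1, \ldots, \beta_k \in B_{X(\omega)}(\alpha; r)$, I would form the $\omega$-large sets $S_i := \{x \in D_\alpha \cap D_{\beta_i} : d(t_\alpha(x), t_{\beta_i}(x)) \leq r\}$ (each $\omega$-large by the first paragraph) and, for each pair $i \neq j$, the $\omega$-large set $\{x \in D_{\beta_i} \cap D_{\beta_j} : t_{\beta_i}(x) \neq t_{\beta_j}(x)\}$ (whose complement is $\omega$-small by the same argument as in the non-degeneracy converse, since $\beta_i \neq \beta_j$). Intersecting these finitely many $\omega$-large sets produces a nonempty set; any point $x$ in it yields $k$ distinct elements $t_{\beta_1}(x), \ldots, t_{\beta_k}(x)$ lying inside $B_X(t_\alpha(x); r)$, giving $k \leq \max_{y \in X} |B_X(y; r)|$.

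The step I expect to require the most care is the existence of the limit, since it is the only place where strong discreteness of $X$ is genuinely essential: without it, the $\omega$-limit of a bounded real function still exists in $\R$ but might fail to lie in the value set of $d$, losing the inclusion $\{d_\omega(\alpha,\beta)\} \subseteq \{d(x,y)\}$. Once this is established, the remaining items reduce to bookkeeping with $\omega$-large sets, mediated by Lemma \ref{esssame} and its trivial converse.
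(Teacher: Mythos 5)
Your proposal is correct and follows essentially the same route as the paper: strong discreteness forces the distance function $x\mapsto d(t_\alpha(x),t_\beta(x))$ to take finitely many values, hence to be constant on an $\omega$-large set, which yields existence of the limit and the inclusion of value sets; Lemma \ref{esssame} gives independence of the compatible family; and the metric axioms and the ball-cardinality bound follow by comparison with $X$ on suitable $\omega$-large sets. You merely spell out the details (triangle inequality, non-degeneracy, and the pairwise-separation argument for bounded geometry) that the paper compresses into ``the remaining parts follow easily on comparing the values of $d_\omega$ to those of $d$.''
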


\begin{proof}
With notation as in the statement, note first that $\omega(D_\alpha\cap D_\beta)=1$ and $\sup_{x\in D_\alpha\cap D_\beta}d(t_\alpha(x),t_\beta(y))<\infty$, whence the limit defining $d_\omega$ makes sense.  We will first show that $d_\omega$ does not depend on the family $\{t_\alpha\}_{\alpha\in X(\omega)}$ of partial translations.  As we clearly have $d_\omega(\alpha,\beta)=d_\omega(\beta,\alpha)$ for any $\alpha,\beta\in X(\omega)$, it suffices to show that for each fixed $\beta$, if we replace $t_\alpha:D_\alpha\to R_\alpha$ with some $s:D_{s}\to R_s$ such that $s(\omega)=\alpha$, then 
$$
\lim_{x\to \omega} d(t_\alpha(x),t_\beta(x))=\lim_{x\to\omega} d(s(x),t_\beta(x)).
$$
Lemma \ref{esssame} implies that if 
$$
D=\{x\in D_\alpha\cap D_s~|~t_\alpha(x)=s(x)\},
$$
then $\omega(D)=1$.  Note that the limits 
$$
\lim_{x\to \omega} d(t_\alpha(x),t_\beta(x)),~~~\lim_{x\to \omega} d(s(x),t_\beta(x))
$$
are unaffected if we only use the restrictions of the functions $x\mapsto d(t_\alpha(x),t_\beta(x))$ and $x\mapsto d(s(x),t_\beta(x))$ to $D\cap D_\beta$, whence they are the same as required.  

We now claim that we may assume that for any fixed $\alpha,\beta\in X(\omega)$, the function $x\mapsto d(t_\alpha(x),t_\beta(x))$ is constant.  Indeed, as $t_\alpha$ and $t_\beta$ are partial translations and $X$ is strongly discrete, this function can only take finitely many distinct values, say $r_1,...,r_k$.  For $i\in \{1,...,k\}$, define
$$
D_i:=\{x\in D_\alpha\cap D_\beta~|~d(t_\alpha(x),t_\beta(x))=r_i\}.
$$
Then there must exist precisely one $i\in \{1,...,k\}$ such that $\omega(D_i)=1$; replacing $t_\alpha$ and $t_\beta$ with their restrictions to this $D_i$ establishes the claim.  

Given this claim, the remaining parts of the statement follow easily on comparing the values of $d_\omega$ on $X(\omega)\times X(\omega)$ to those of $d$ on $X\times X$.
\end{proof}

\begin{definition}\label{limspace}
For each non-principal ultrafilter $\omega$ on $X$, the metric space $(X(\omega),d_\omega)$ is called the \emph{limit space} of $X$ at $\omega$.  
\end{definition}

\begin{proposition}\label{prop:Xomega-same}
Fix an ultrafilter $\omega\in\beta X$. For any $\alpha\in X(\omega)$ we have $X(\alpha)=X(\omega)$ as metric spaces.
\end{proposition}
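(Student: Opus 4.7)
The plan is to prove this in two stages: first verify that $X(\alpha)$ and $X(\omega)$ coincide as sets, then verify that the two metrics $d_\alpha$ and $d_\omega$ agree on this common set.

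The set equality is essentially immediate from Remark \ref{rem:comp-alt}, where it was shown that compatibility of ultrafilters is an equivalence relation: if $\alpha$ is compatible with $\omega$, then an ultrafilter $\beta$ is compatible with $\omega$ if and only if it is compatible with $\alpha$, so $X(\alpha) = X(\omega)$ as sets.

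For the metric equality, fix $\beta,\gamma\in X(\omega)=X(\alpha)$ and choose partial translations $t_\alpha:D_\alpha\to R_\alpha$, $t_\beta:D_\beta\to R_\beta$, $t_\gamma:D_\gamma\to R_\gamma$ all compatible with $\omega$ and witnessing $t_\alpha(\omega)=\alpha$, $t_\beta(\omega)=\beta$, $t_\gamma(\omega)=\gamma$. Following the transitivity argument in Remark \ref{rem:comp-alt}, the compositions $u_\beta:=t_\beta\circ t_\alpha^{-1}$ and $u_\gamma:=t_\gamma\circ t_\alpha^{-1}$, restricted to $E:=t_\alpha(D_\alpha\cap D_\beta\cap D_\gamma)\subseteq R_\alpha$, are partial translations compatible with $\alpha$ that satisfy $u_\beta(\alpha)=\beta$ and $u_\gamma(\alpha)=\gamma$. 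Note $\alpha(E)=1$ because $\omega(D_\alpha\cap D_\beta\cap D_\gamma)=1$ and $t_\alpha(\omega)=\alpha$. By Proposition \ref{ulim} the metrics $d_\omega$ and $d_\alpha$ do not depend on the choice of compatible family, so we may use $\{t_\beta,t_\gamma\}$ to compute $d_\omega(\beta,\gamma)$ and $\{u_\beta,u_\gamma\}$ to compute $d_\alpha(\beta,\gamma)$.

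The key step is then to relate the $\omega$-limit to the $\alpha$-limit via the change of variables $y=t_\alpha(x)$. I would handle this by repeating the strong-discreteness trick from the proof of Proposition \ref{ulim}: the function $x\mapsto d(t_\beta(x),t_\gamma(x))$ takes only finitely many values on $D_\beta\cap D_\gamma$, and exactly one preimage $D$ has $\omega(D)=1$; on $D$ the function is constantly $d_\omega(\beta,\gamma)$. Setting $D':=t_\alpha(D\cap D_\alpha)\subseteq E$, we have $\alpha(D')=1$, and for every $y\in D'$, writing $y=t_\alpha(x)$ with $x\in D\cap D_\alpha$, the definitions give $u_\beta(y)=t_\beta(x)$ and $u_\gamma(y)=t_\gamma(x)$, so $d(u_\beta(y),u_\gamma(y))=d(t_\beta(x),t_\gamma(x))=d_\omega(\beta,\gamma)$. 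Thus $y\mapsto d(u_\beta(y),u_\gamma(y))$ is constant on a set of $\alpha$-measure one, which forces $d_\alpha(\beta,\gamma)=d_\omega(\beta,\gamma)$.

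The main obstacle is purely bookkeeping of the various domains and ranges to ensure that all compositions are defined on subsets of full ultrafilter measure; once one is careful about this, the strong discreteness of $X$ collapses both limits to the same constant value and the argument proceeds cleanly. There is no real analytic content beyond what was already developed for Proposition \ref{ulim}.
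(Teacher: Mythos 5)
Your proposal is correct and follows essentially the same route as the paper: set equality from the fact that compatibility is an equivalence relation (Remark \ref{rem:comp-alt}), and metric equality by computing $d_\alpha$ with the composed compatible family $t_\beta\circ t_\alpha^{-1}$, $t_\gamma\circ t_\alpha^{-1}$ and changing variables in the ultralimit. The paper simply cites Remark \ref{rem:comp-alt} for that change of variables, whereas you spell it out via the strong-discreteness reduction to a constant function on a set of full measure; this is just a more explicit rendering of the same step.
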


\begin{proof}
Remark \ref{rem:comp-alt} shows that compatibility is an equivalence relation, which implies $X(\alpha)=X(\omega)$ as sets.  We now show that the metrics $d_\alpha$ and $d_\omega$ are in fact the same: for $\beta,\gamma\in X(\omega)$, we have
$$
d_\alpha(\beta,\gamma) = \lim_{x\to\alpha}d(t_\beta\circ t_\alpha^{-1}(x),t_\gamma\circ t_\alpha^{-1}(x)).
$$
Applying Remark \ref{rem:comp-alt} again, we see that the above limit is in fact equal to
$$
\lim_{x\to\omega}d(t_\beta(x),t_\gamma(x))=d_\omega(\beta,\gamma),
$$
which finishes the proof.
\end{proof}

It is perhaps not obvious at this point what aspect of the geometry of $X$ a limit space $X(\omega)$ is capturing.  We will spend the rest of this section trying to make this a bit clearer in a way that will be useful later: the following proposition makes limit spaces a bit more concrete, and allows us to give some examples.

\begin{proposition}\label{ultequiv}
Let $\omega$ be a non-principal ultrafilter on $X$, and $\{t_\alpha:D_\alpha \to R_\alpha\}$ a compatible family for $\omega$.

For each finite subset $F$ of $X(\omega)$ there exists a subset $Y$ of $X$ with $\omega(Y)=1$, and such that for each $y\in Y$ there is a finite subset $G(y)$ of $X$ such that the map
$$
f_y:F\to G(y),~~~\alpha\mapsto t_\alpha(y)
$$ 
is a surjective isometry. 
\end{proposition}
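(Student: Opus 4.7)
The plan is to construct $Y$ as a finite intersection of $\omega$-large subsets of $X$, each refinement ensuring one of the required properties of the map $f_y$. The argument is a direct extension of the technique already used in the proof of Proposition \ref{ulim}.

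Enumerate $F = \{\alpha_1, \ldots, \alpha_n\}$. First, since each $t_{\alpha_i}$ is compatible with $\omega$, we have $\omega(D_{\alpha_i}) = 1$, hence the intersection $D := \bigcap_{i=1}^n D_{\alpha_i}$ also satisfies $\omega(D) = 1$; on $D$ every one of the maps $y \mapsto t_{\alpha_i}(y)$ is defined. Next, for each pair $i \neq j$, invoke the ``constant function'' step from the proof of Proposition \ref{ulim}: the function $x \mapsto d(t_{\alpha_i}(x), t_{\alpha_j}(x))$ on $D$ takes only finitely many values, because both partial translations have bounded displacement and $X$ is strongly discrete. Partitioning $D$ accordingly and using that $\omega$ is an ultrafilter, exactly one subset $D_{ij} \subseteq D$ has $\omega(D_{ij}) = 1$, and on $D_{ij}$ this distance equals a fixed value which, by the very definition of $d_\omega$, must be $d_\omega(\alpha_i, \alpha_j)$.

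Set $Y := \bigcap_{i < j} D_{ij}$; this is a finite intersection of $\omega$-large sets, so $\omega(Y) = 1$. For any $y \in Y$ and any indices $i, j$ one then has
$$
d(t_{\alpha_i}(y), t_{\alpha_j}(y)) = d_\omega(\alpha_i, \alpha_j).
$$
Since $d_\omega$ is a metric on $X(\omega)$ (Proposition \ref{ulim}) and the $\alpha_i$ are distinct, the right-hand side is strictly positive whenever $i \neq j$; hence the points $t_{\alpha_1}(y), \ldots, t_{\alpha_n}(y)$ are pairwise distinct. Letting $G(y) := \{t_{\alpha}(y) \mid \alpha \in F\}$, the map $f_y \colon F \to G(y)$ is then, by construction, a surjective isometry.

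I do not anticipate any genuine obstacle here: the proof is essentially bookkeeping, combining the closure of $\omega$ under finite intersection with the finite-valuedness of the displacement comparison between two partial translations (a consequence of strong discreteness together with bounded propagation). The only mild subtlety is recognising that ``finite $F$'' is what makes this all collapse into a single $\omega$-large set $Y$ that works uniformly for all pairs in $F$ at once.
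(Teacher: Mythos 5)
Your proof is correct and follows essentially the same route as the paper: both identify, for each pair $\alpha,\beta\in F$, the $\omega$-large set where $d(t_\alpha(x),t_\beta(x))=d_\omega(\alpha,\beta)$ (using strong discreteness and the definition of $d_\omega$), intersect these finitely many sets to get $Y$, and define $G(y)$ and $f_y$ exactly as in the statement. Your write-up merely makes more explicit the ``finitely many values'' step already carried out in the proof of Proposition \ref{ulim}.
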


Thus in some sense, the geometry of $X(\omega)$ models the geometry of $X$ `around' sets of $\omega$-measure one.

\begin{proof}[Proof of Proposition \ref{ultequiv}]
As the metric on $X$ is strongly discrete and using the definition of $d_\omega$, we must have that for each $\alpha,\beta\in X(\omega)$, the set
$$
Y_{\alpha\beta}:=\{x\in D_\alpha~|~d(t_\alpha(x),t_\beta(x))=d_\omega(\alpha,\beta)\}
$$
has $\omega$-measure one.  Set 
$$
Y=\bigcap_{\alpha,\beta\in F}Y_{\alpha\beta};
$$
as this is a finite intersection of subsets of $X$ of $\omega$-measure one, it too has $\omega$-measure one.  For each $y\in Y$, define 
$$
G(y):=\{t_\alpha(y)~|~\alpha\in F\}
$$
and define $f_y:F\to G(y)$ by $f_y(\alpha)=t_\alpha(y)$; these sets and maps have the desired properties.  
\end{proof}

\begin{definition}\label{def:loco}
Let $\omega$ be a non-principal ultrafilter on $X$, $\{t_\alpha\}$ a compatible family for $\omega$, and $F$ a finite subset of $X(\omega)$.  We call a collection $\{f_y:F\to G(y)\}_{y\in Y}$ with the properties in Proposition \ref{ultequiv} above a \emph{local coordinate system} for $F$ (with respect to $\{t_\alpha\}$).  The maps $f_y:F\to G(y)$ are called \emph{local coordinates}.
\end{definition}

\begin{remark}\label{rem:local-coords-balls}
Assume that $F\subseteq X(\omega)$ is a metric ball $B(\omega;r)$.  Then for any compatible family $\{t_\alpha\}$, there exists a local coordinate system $\{f_y:F\to G(y)\}$ for $F$ with respect to $\{t_\alpha\}$ such that each $G(y)$ is the ball $B(y;r)$.  Indeed, take any local coordinate system $\{f_y:F\to G(y)\}_{y\in Y_0}$.  It is clear that the range of any local coordinate $f_y:B(\omega;r)\to G(y)$ is a \emph{subset} of $B(y;r)$, and thus we may define $Y=\{y\in Y_0\mid f_y\text{ is onto } B(y;r)\}$.

It suffices to show that $\omega(Y)=1$.  Note that for every $y\in Y_0\setminus Y$, there exists $x_y\in B(y;r)\setminus f_y(B(\omega;r))$.  Define $t: Y_0\setminus Y\to X$ by $t(y)=x_y$.  If $\omega(Y)=0$, then $t$ is compatible with $\omega$ and necessarily $t(\omega)\in B(\omega;r)$.  This implies that $x_y$ is in the range of $f_y$ for all $y\in Y_0\setminus Y$, which is a contradiction.
\end{remark}

\begin{remark}\label{ghlimits}
Using some language from metric geometry, the spaces $X(\omega)$ admit the following alternative description; we will not use this in what follows, but thought it might be useful to some readers to point it out. 

Given $\omega\in\beta X$, one can use the above proposition to find a sequence $(x_n)$ in $X$ that tends to infinity and such that the pointed metric space $(X(\omega),\omega)$ is the pointed Gromov-Hausdorff limit of the sequence of pointed metric spaces $(X,x_n)$.  Conversely, any limit space of $X$ arises as a pointed Gromov-Hausdorff limit in this way. However, Gromov-Hausdorff limits traditionally concern \emph{isometry classes} of spaces, not the spaces themselves. For the purposes of this paper, it is important that the spaces $(X,x_n)$ converge to $(X(\omega),\omega)$ in a \emph{specific} way, as seen in the previous proposition.
\end{remark}

\begin{examples}\label{limspaceex}
In the following examples, we look at which metric spaces can arise as limit spaces of a given metric space.  We leave the justifications - which are not difficult, given Proposition \ref{ultequiv} - to the reader. 
\begin{enumerate}
\item Let $X=G$ be a discrete group, equipped with any (strongly discrete, bounded geometry) metric that is invariant under the natural left action of $G$ on itself.  Then all limit spaces of $G$ are isometric to $G$ with the given metric (see Lemma \ref{limspgp}).

In particular, if $X=\Z^N$ equipped with any metric defined by restricting a norm from $\R^N$, then all limit spaces are isometric to $\Z^N$ (with the same metric).
\item If $X=\N$ with its usual metric, then all limit spaces are isometric to $\Z$ with its usual metric.  
\item If $X=\{(x,y)\in \R^2~|~x,y\in \N\}$ with the subspace metric, then all limit spaces are isometric to one of
\begin{align*}
 \{(x,y)\in \R^2~|~x\in& \N,y\in \Z\} ,~~~\{(x,y)\in \R^2~|~x\in \Z,y\in \N\}, \\
 & \{(x,y)\in \R^2~|~x,y\in \Z\},
\end{align*}
corresponding to ultrafilters that contain a vertical ray, a horizontal ray, and neither, respectively.
Of course, the first and second of these are themselves isometric! - nonetheless, we thought it would be useful to list them separately as they arise naturally in these forms.
\item If $\|\cdot\|_\infty$ denotes the $\ell^\infty$ norm on $\R^2$ and 
$$
X=\{(x,y)\in \R^2~|~x,y\in \Z,\|(x,y)\|_\infty=n^2 \text{ for some }n\in \N\}
$$
with the restricted $\ell^\infty$ metric, then all limit spaces of $X$ are isometric to $\Z$.
\item Say $G$ is a discrete group generated by a finite set $S$.  Define 
$$
S^{\pm n}=\{g\in G~|~g=s_1^{\pm1}\cdots s_m^{\pm 1},~m\leq n,~s_i\in S\},
$$
and define the \emph{word metric} on $G$ by 
$$
d(g,h)=\min\{n~|~g^{-1}h\in S^{\pm n}\}.
$$
Let
$$
G=G_0\unrhd G_1\unrhd G_2\unrhd\cdots
$$
be a nested sequence of finite index normal subgroups of $G$ such that $\cap G_n=\{e\}$.  The \emph{box space} associated to this data is the disjoint union $X=\sqcup_n (G/G_n)$, where each finite group $G/G_n$ is equipped with the word metric associated to the (image of the) fixed finite generating set of $G$, and $X$ is equipped with any metric that restricts to these metrics on $G/G_n$ and satisfies 
$$
d(G_n,G_n)\to \infty \text{ as } n,m\to\infty~, n\neq m.
$$
Examples of this form have been intensively studied in coarse geometry: see for example \cite[Sections 11.3 and 11.5]{Roe:2003rw}, \cite{Arzhantseva:2011vn}, and \cite{Oyono-Oyono:2009ua}.  All limit spaces of a box space are isometric to $G$, equipped with the given word metric.  Note that the previous example can be identified with the special case of this one where $G=\Z$, the generating set is $\{1\}$, and the subgroups are $G_n=8n^2\Z$.
\end{enumerate}
\end{examples}

\section{Limit operators}\label{sec:limitops}

Throughout the section $X$ denotes a space as in Definition \ref{sdbg}, $E$ denotes a fixed Banach space, and $p$ is a fixed number in $(1,\infty)$.

We will consider `limits at infinity' of band-dominated operators; the following definition formalises the requirement that such limits exist.

\begin{definition}\label{richdef}
Let $\omega$ be a non-principal ultrafilter on $X$.  An operator $A$ in $\AX$ is \emph{rich at $\omega$} if for any pair of partial translations $t,s$ compatible with $\omega$, the limit
$$
\lim_{x\to\omega}A_{t(x)s(x)}
$$
exists for the norm topology on $\mathcal{L}(E)$.  We denote $\AXro$ the collection of band-dominated operators that are rich at $\omega$.

If $A$ is rich at $\omega$ for all $\omega$ in $\partial X$, it is said to be \emph{rich}. We denote $\AXr$ the collection of all rich band-dominated operators.
\end{definition}

\begin{remark}\label{richrem}
Observe that if $E$ is finite dimensional, then all the operators in $\AX$ are automatically rich.  Indeed, this follows as all the matrix entries of some band-dominated $A$ are contained in the ball about zero of radius $\|A\|$ in $E$, which is compact.
\end{remark}

\begin{remark}\label{rem:rich-band-approx}
The reader may wonder whether rich \emph{band-dominated} operators are automatically approximable in norm by rich \emph{band} operators. We show that this is indeed the case in Theorem \ref{denserich}. However, we require an extra assumption on the space $X$ (namely Yu's property A, introduced in Section \ref{sec:parametrices}), so we postpone proving and using this result until Section \ref{sec:parametrices}, where the technique naturally fits.
\end{remark}

\begin{definition}\label{limopdef}
Let $\omega$ be a non-principal ultrafilter on $X$, and $A$ be a band-dominated operator on $\ell^p_E(X)$ that is rich at $\omega$.   Fix a compatible family $\{t_\alpha\}_{\alpha\in X(\omega)}$ for $\omega$.  

The \emph{limit operator of $A$ at $\omega$}, denoted $\Phi_\omega(A)$, is the $X(\omega)$-by-$X(\omega)$ indexed matrix with entries in $\mathcal{L}(E)$ defined by
$$
A_{\alpha\beta}:=\lim_{x\to\omega}A_{t_\alpha(x)t_\beta(x)}.
$$
\end{definition}

The following lemma is immediate from Lemma \ref{esssame}.

\begin{lemma}\label{limopwd}
Let $\omega$ be a non-principal ultrafilter on $X$, and $A$ be a band-dominated operator on $\ell^p_E(X)$ that is rich at $\omega$.   The limit operator $\Phi_\omega(A)$ does not depend on the choice of compatible family. \qed
\end{lemma}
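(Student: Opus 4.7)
The plan is to reduce the independence statement to Lemma \ref{esssame}. Fix two compatible families $\{t_\alpha : D_\alpha \to R_\alpha\}_{\alpha \in X(\omega)}$ and $\{s_\alpha : D'_\alpha \to R'_\alpha\}_{\alpha \in X(\omega)}$ for $\omega$. For each $\alpha \in X(\omega)$, both $t_\alpha$ and $s_\alpha$ are partial translations compatible with $\omega$ satisfying $t_\alpha(\omega) = s_\alpha(\omega) = \alpha$, so Lemma \ref{esssame} applies and tells us that
$$
E_\alpha := \{x \in D_\alpha \cap D'_\alpha \mid t_\alpha(x) = s_\alpha(x)\}
$$
has $\omega(E_\alpha) = 1$.

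Now, to compare the $(\alpha,\beta)$-entry of the limit operator as computed via the two families, fix $\alpha, \beta \in X(\omega)$ and consider the set $E := E_\alpha \cap E_\beta$. Since $\omega$ is finitely additive in the sense that a finite intersection of sets of $\omega$-measure one still has $\omega$-measure one, $\omega(E) = 1$. For every $x \in E$ we have simultaneously $t_\alpha(x) = s_\alpha(x)$ and $t_\beta(x) = s_\beta(x)$, and hence $A_{t_\alpha(x) t_\beta(x)} = A_{s_\alpha(x) s_\beta(x)}$.

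Finally, the defining limit $\lim_{x \to \omega} A_{t_\alpha(x) t_\beta(x)}$ depends only on the restriction of the function $x \mapsto A_{t_\alpha(x) t_\beta(x)}$ to any set of $\omega$-measure one (this is immediate from the definition of $\lim_\omega$ recalled in Definition \ref{limdef}/Appendix \ref{ultrafilterssec}). Restricting both functions to $E$ yields the same function, so the two limits agree. I do not expect any real obstacle here; the content of the lemma is entirely packaged inside Lemma \ref{esssame}, and all that remains is the routine observation that $\omega$-limits are insensitive to modifications on sets of $\omega$-measure zero, together with the stability of the measure-one property under finite intersections.
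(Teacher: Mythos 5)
Your proof is correct and is exactly the argument the paper intends: the paper simply declares the lemma ``immediate from Lemma \ref{esssame}'', and your write-up fills in the routine details (intersecting the two measure-one agreement sets for $\alpha$ and $\beta$, then using that $\omega$-limits only see a set of $\omega$-measure one). Nothing is missing.
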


We emphasise at this point that the limit operator $\Phi_\omega(A)$ is a fairly formal object: it is only an abstractly defined matrix, and in particular does not obviously operate on anything!  The next proposition, which is a development of Proposition \ref{ultequiv}, will help us to make limit operators a little more concrete.

\begin{proposition}\label{concretelimop}
Let $\omega$ be a non-principal ultrafilter on $X$, and $A$ be a band-dominated operator on $\ell^p_E(X)$ that is rich at $\omega$.  Let $F$ be a finite subset of $X(\omega)$ and $\epsilon>0$.  Let $\{t_\alpha\}$ be a compatible family of partial translations for $\omega$.

Then there exists a local coordinate system $\{f_y:F\to G(y)\}_{y\in Y}$ as in Definition \ref{def:loco} such that for each $y\in Y$, if
$$
U:\ell^p_E(F)\to \ell^p_E(G(y)),~~~(U\xi)(x):=\xi(f_y^{-1}(x))
$$
is the linear isometry induced by $f_y$, then (recalling the notation for idempotents from Definition \ref{Xcompact}), we have that
$$
\|U^{-1}P_{G(y)}AP_{G(y)}U-P_F\Phi_\omega(A)P_F\|<\epsilon,
$$
where we think of $P_F\Phi_\omega(A)P_F$ as a finite $F$-by-$F$ matrix, with entries in $\mathcal{L}(E)$, acting on $\ell^p_E(F)$ by matrix multiplication.
\end{proposition}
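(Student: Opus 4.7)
The plan is to reduce everything to a direct estimate on the finitely many matrix entries of the $F$-by-$F$ matrix $E_y := U^{-1}P_{G(y)}AP_{G(y)}U - P_F\Phi_\omega(A)P_F$. First I would just unravel the definitions: since the linear isometry $U$ is induced by the bijection $f_y$ and since $f_y(\alpha)=t_\alpha(y)\in G(y)$ for every $\alpha\in F$, the $(\beta,\alpha)$-entry of $U^{-1}P_{G(y)}AP_{G(y)}U$ is precisely $A_{t_\beta(y)t_\alpha(y)}$, while the $(\beta,\alpha)$-entry of $P_F\Phi_\omega(A)P_F$ is $\Phi_\omega(A)_{\beta\alpha}=\lim_{x\to\omega}A_{t_\beta(x)t_\alpha(x)}$. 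Thus the entries of $E_y$ are exactly the differences $A_{t_\beta(y)t_\alpha(y)}-\Phi_\omega(A)_{\beta\alpha}$ for $\alpha,\beta\in F$.

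Next I would bound the operator norm of $E_y$ on $\ell^p_E(F)$. Since $F$ is finite, any $F$-by-$F$ matrix has finite propagation (at most $\operatorname{diam}_{d_\omega}(F)$), and $|B_F(\alpha;r)|\le |F|$ for every $r$. Thus Corollary \ref{boundrep} gives
$$
\|E_y\|\le |F|\cdot \sup_{\alpha,\beta\in F}\|A_{t_\beta(y)t_\alpha(y)}-\Phi_\omega(A)_{\beta\alpha}\|.
$$
So it suffices to find $y$ for which each of the $|F|^2$ entry differences has norm less than $\epsilon/|F|$, while also $y$ belongs to the index set of a local coordinate system for $F$.

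For each fixed pair $(\alpha,\beta)\in F\times F$, richness of $A$ at $\omega$ together with Definition \ref{limopdef} gives that $\Phi_\omega(A)_{\beta\alpha}$ is the norm $\omega$-limit of $x\mapsto A_{t_\beta(x)t_\alpha(x)}$, so the set
$$
Y_{\alpha\beta}:=\bigl\{x\in D_\alpha\cap D_\beta\;\big|\;\|A_{t_\beta(x)t_\alpha(x)}-\Phi_\omega(A)_{\beta\alpha}\|<\epsilon/|F|\bigr\}
$$
has $\omega$-measure one. Intersecting over the finite index set $F\times F$ yields a set $Y_1$ with $\omega(Y_1)=1$. Now apply Proposition \ref{ultequiv} to the same finite set $F$ (using the given compatible family $\{t_\alpha\}$) to obtain a local coordinate system $\{f_y\colon F\to G(y)\}_{y\in Y_0}$ with $\omega(Y_0)=1$, and set $Y:=Y_0\cap Y_1$, still of $\omega$-measure one. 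For every $y\in Y$, $f_y$ is a surjective isometry (hence $U$ is an isometric isomorphism $\ell^p_E(F)\to\ell^p_E(G(y))$), and the entry bound combined with the displayed norm estimate yields $\|E_y\|<\epsilon$, as required.

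I do not anticipate a serious obstacle: the proof is essentially bookkeeping, glueing together the matrix-entry definition of $\Phi_\omega(A)$ with the isometric geometric model supplied by Proposition \ref{ultequiv}. The only mildly subtle point is choosing the tolerance $\epsilon/|F|$ correctly in light of the $|F|$ factor coming from Corollary \ref{boundrep}; but even that could be replaced by any simple-minded bound on the norm of a finite matrix in terms of its entries.
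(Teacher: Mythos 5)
Your proposal is correct and follows essentially the same route as the paper: identify the matrix entries of the difference as $A_{t_\beta(y)t_\alpha(y)}-\Phi_\omega(A)_{\beta\alpha}$, use richness at $\omega$ to make each of the finitely many entries small on a set of $\omega$-measure one, and intersect with the index set of a local coordinate system from Proposition \ref{ultequiv}. The only cosmetic difference is the entrywise tolerance (the paper uses $\epsilon/|F|^2$ with a cruder matrix-norm bound, you use $\epsilon/|F|$ via Corollary \ref{boundrep}); both suffice.
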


\begin{proof}
For each $\alpha,\beta\in X(\omega)$ set
$$
V_{\alpha\beta}=\{x\in D_\alpha\cap D_\beta~|~\|A_{t_\alpha(x)t_\beta(y)}-\Phi_\omega(A)_{\alpha\beta}\|<\epsilon/|F|^2\}.
$$
Then by definition of $d_\omega$ and $\Phi_\omega(A)$, the sets $V_{\alpha\beta}$ have $\omega$-measure one, whence $Y_{\alpha\beta}:=V_{\alpha\beta}\cap Y_0$ has $\omega$-measure one too.  Now set 
$$
Y=\bigcap_{\alpha,\beta\in F}Y_{\alpha\beta},
$$
which again has $\omega$-measure one as it is a finite intersection of sets with $\omega$-measure one.  The choices of $V_{\alpha\beta}$ guarantee that this $Y$ has the right properties.  
\end{proof}

\begin{corollary}\label{limopbound}
Let $\omega$ be a non-principal ultrafilter on $X$, and $A$ be a band-dominated operator on $\ell^p_E(X)$ that is rich at $\omega$.   Then the matrix $\Phi_\omega(A)$ defines a bounded operator on $\ell^p_E(X(\omega))$ with norm at most $\|A\|$.
\end{corollary}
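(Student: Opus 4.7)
The plan is to deduce the bound from Proposition \ref{concretelimop}: once we know $\|P_F \Phi_\omega(A) P_F\| \leq \|A\|$ uniformly over finite $F \subseteq X(\omega)$, a density argument extends this to the full operator. So the heart of the argument is controlling the finite compressions, and Proposition \ref{concretelimop} is tailor-made for this purpose.

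To that end, I would fix a finite $F \subseteq X(\omega)$, a compatible family $\{t_\alpha\}$, and $\epsilon > 0$. Proposition \ref{concretelimop} produces a point $y \in X$, a local coordinate bijection $f_y \colon F \to G(y) \subseteq X$ that is a surjective isometry, and the induced linear isometry $U \colon \ell^p_E(F) \to \ell^p_E(G(y))$ satisfying
$$\|U^{-1} P_{G(y)} A P_{G(y)} U - P_F \Phi_\omega(A) P_F\| < \epsilon.$$
Since $P_{G(y)}$ is a norm-one idempotent on $\ell^p_E(X)$ and $U$ is an isometric isomorphism, we have $\|U^{-1} P_{G(y)} A P_{G(y)} U\| \leq \|A\|$. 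The triangle inequality then yields $\|P_F \Phi_\omega(A) P_F\| \leq \|A\| + \epsilon$, and letting $\epsilon \to 0$ gives $\|P_F \Phi_\omega(A) P_F\| \leq \|A\|$ for every finite $F \subseteq X(\omega)$.

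Finally, for any finitely supported $\xi \in \ell^p_E(X(\omega))$ with $\xi = P_{F_1} \xi$, the formal matrix product $\Phi_\omega(A)\xi$ is defined pointwise as a finite sum. For any finite $F_2 \subseteq X(\omega)$, setting $F = F_1 \cup F_2$ and using $P_{F_2} = P_{F_2} P_F$ together with $P_{F_1} = P_F P_{F_1}$, we obtain
$$\|P_{F_2} \Phi_\omega(A) \xi\|_p \leq \|P_F \Phi_\omega(A) P_F\| \cdot \|\xi\|_p \leq \|A\| \cdot \|\xi\|_p.$$
Taking a supremum over finite $F_2$ shows $\Phi_\omega(A)\xi \in \ell^p_E(X(\omega))$ with $\|\Phi_\omega(A)\xi\|_p \leq \|A\| \cdot \|\xi\|_p$; density of finitely supported functions extends $\Phi_\omega(A)$ to a bounded operator on all of $\ell^p_E(X(\omega))$ of norm at most $\|A\|$. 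The only substantive step is the compression bound, which leans essentially on Proposition \ref{concretelimop}; the fact that the local coordinate maps $f_y$ are \emph{genuine} isometries (not merely bi-Lipschitz or close-to-isometric) is what allows the bound to be transferred without any loss.
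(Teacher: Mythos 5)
Your proof is correct and follows essentially the same route as the paper: both arguments reduce the bound to the finite compressions $\|P_F\Phi_\omega(A)P_F\|$ and control these via the local coordinate isometries of Proposition \ref{concretelimop} (the paper phrases this as a proof by contradiction, you argue directly). The only difference is that you spell out the supremum-over-finite-sets and density step needed to pass from the compression bounds to boundedness of the full matrix, which the paper leaves implicit.
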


\begin{proof}
Assume for contradiction this is not the case.  Then there exists a finite subset $F$ of $X(\omega)$ such that $\|P_F\Phi_\omega(A)P_F\|$ has norm at least $\|A\|+\epsilon$ for some $\epsilon>0$.  Using Proposition \ref{concretelimop}, however, it follows that there exists a finite subset $G$ of $X$ which is isometric to $F$ and such that
$$
|\|P_{G}AP_{G}\|-\|P_F\Phi_\omega(A) P_F\||<\epsilon/2.
$$
As $\|P_{G}AP_{G}\|\leq \|A\|$, this is a contradiction.
\end{proof}

Corollary \ref{limopbound} allows us to canonically identify each limit operator $\Phi_\omega(A)$ with a concrete bounded operator on $\ell^p_E(X(\omega))$; we will do this without further comment from now on.  

The next lemma is the final technical ingredient we need to derive the main properties of limit operators.  

\begin{lemma}\label{prodmatent}
Let $\omega$ be a non-principal ultrafilter on $X$, let $A$ and $B$ be operators in $\AX$ that are rich at $\omega$.  Let $\{t_\alpha\}$ be a compatible family for $\omega$.  Then for any $\alpha$, $\beta$ in $X(\omega)$, the limit 
$$
\lim_{x\to\omega}(AB)_{t_\alpha(x)t_\beta(x)}
$$
exists for the norm topology on $\mathcal{L}(E)$ and equals $(\Phi_\omega(A)\Phi_\omega(B))_{\alpha\beta}$.
\end{lemma}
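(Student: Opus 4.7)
The plan is to approximate $A$ and $B$ by band operators, invoke Proposition \ref{concretelimop} on a suitable finite subset of $X(\omega)$ to obtain a quantitative comparison between matrix entries of $AB$ and of $\Phi_\omega(A)\Phi_\omega(B)$, and then pass to a limit.

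Fix $\epsilon > 0$. Choose band operators $A', B'$ with $\|A - A'\|, \|B - B'\| < \epsilon$ and common propagation bound $r$, let $R := \max(r, d_\omega(\alpha, \beta))$, and set $F := B_{X(\omega)}(\alpha; R)$ (a finite set containing $\alpha$ and $\beta$, by the bounded geometry established in Proposition \ref{ulim}). An adaptation of Remark \ref{rem:local-coords-balls} with $\alpha$ in place of $\omega$ as the base point provides a local coordinate system $\{f_y: F \to G(y)\}_{y \in Y_0}$ for $F$ with $G(y) = B_X(t_\alpha(y); R)$ and $\omega(Y_0) = 1$. Applying Proposition \ref{concretelimop} to each of $A$ and $B$ with this $F$ and tolerance $\epsilon$, and intersecting the resulting $\omega$-large index subsets with $Y_0$, we obtain $Y$ with $\omega(Y) = 1$ on which the displayed matrix estimates of Proposition \ref{concretelimop} hold for both $A$ and $B$ simultaneously. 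Write $U_y : \ell^p_E(F) \to \ell^p_E(G(y))$ for the induced isometry.

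Submultiplicativity yields, after extracting the $(\alpha, \beta)$-matrix entry on the left (which under $U_y$ is the $(t_\alpha(y), t_\beta(y))$-entry), the inequality
\[
\bigl\|\bigl(P_{G(y)} A P_{G(y)} B P_{G(y)}\bigr)_{t_\alpha(y) t_\beta(y)} - \bigl(P_F \Phi_\omega(A) P_F \Phi_\omega(B) P_F\bigr)_{\alpha\beta}\bigr\| \leq \epsilon(\|A\|+\|B\|)
\]
for every $y \in Y$. On the other hand, since $\prop(A') \leq r \leq R$, every $w$ contributing to $(A'B')_{t_\alpha(y) t_\beta(y)}$ lies in $B_X(t_\alpha(y); r) \subseteq G(y)$, so $(A'B')_{t_\alpha(y) t_\beta(y)}$ coincides with $(P_{G(y)} A' P_{G(y)} B' P_{G(y)})_{t_\alpha(y) t_\beta(y)}$. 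Combining with the elementary bound $\|AB - A'B'\| \leq \epsilon(\|A\|+\|B\|+\epsilon)$ (and similarly for the projected versions) produces a constant $C = C(\|A\|,\|B\|)$ for which
\[
\bigl\|(AB)_{t_\alpha(y) t_\beta(y)} - \bigl(P_F \Phi_\omega(A) P_F \Phi_\omega(B) P_F\bigr)_{\alpha\beta}\bigr\| \leq C\epsilon \quad\text{for every } y \in Y.
\]

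As $\epsilon \to 0$, $R \to \infty$ and $F = F(\epsilon)$ exhausts $X(\omega)$, and the displayed inequality is a Cauchy-type estimate that both forces $L := \lim_{y\to\omega}(AB)_{t_\alpha(y) t_\beta(y)}$ to exist in the norm topology of $\mathcal{L}(E)$ and gives norm-convergence $(P_F \Phi_\omega(A) P_F \Phi_\omega(B) P_F)_{\alpha\beta} \to L$. For each fixed $\eta \in E$, on the other hand, strong-operator convergence $P_F \to I$ on $\ell^p_E(X(\omega))$ propagates through the bounded operators $\Phi_\omega(A), \Phi_\omega(B)$ to yield the pointwise-in-$\eta$ convergence
\[
\bigl(P_F \Phi_\omega(A) P_F \Phi_\omega(B) P_F(\delta_\beta \otimes \eta)\bigr)(\alpha) \longrightarrow (\Phi_\omega(A)\Phi_\omega(B))_{\alpha\beta}(\eta)
\]
in $E$, whence uniqueness of limits identifies $L$ with $(\Phi_\omega(A)\Phi_\omega(B))_{\alpha\beta}$. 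The main obstacle is this last identification: one must reconcile the norm convergence supplied by band approximation with the only naturally available (strong-operator) convergence to the target matrix entry, in order to avoid assuming extra compactness properties of $\Phi_\omega(B)\delta_\beta$ viewed as a map $E \to \ell^p_E(X(\omega))$.
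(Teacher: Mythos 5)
Your argument is correct, and it reaches the conclusion by a genuinely different route from the paper's. The paper fixes a single finite window $F\subseteq X(\omega)$ once and for all and runs a three-term estimate: it truncates $(AB)_{t_\alpha(y)t_\beta(y)}$ to $(AP_{G(y)}B)_{t_\alpha(y)t_\beta(y)}$ using the row-tail bound $\|P_{\{x\}}A(1-P_G)\|<\epsilon/3\|B\|$ coming from band-domination of $A$; it truncates the target $(\Phi_\omega(A)\Phi_\omega(B))_{\alpha\beta}$ to $(\Phi_\omega(A)P_F\Phi_\omega(B))_{\alpha\beta}$ using an analogous tail bound for $\Phi_\omega(A)$; and it then compares the two finite sums over $\gamma\in F$ entry by entry using richness. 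You instead approximate both $A$ and $B$ by band operators, compare the truncated blocks $P_{G(y)}AP_{G(y)}$ and $P_F\Phi_\omega(A)P_F$ in operator norm via Proposition \ref{concretelimop} and multiply those estimates, and only identify the resulting norm limit with $(\Phi_\omega(A)\Phi_\omega(B))_{\alpha\beta}$ at the very end, by letting $F$ exhaust $X(\omega)$ and matching the norm limit of $(P_F\Phi_\omega(A)P_F\Phi_\omega(B)P_F)_{\alpha\beta}$ against the strong-operator limit; your closing remark correctly isolates this as the delicate point, and the uniqueness-of-limits argument does close it. What your route buys is that you never need a \emph{norm} tail estimate of the form $\|P_{\{\alpha\}}\Phi_\omega(A)(1-P_F)\|<\epsilon$ for the limit operator itself --- the one step in the paper's proof that requires a little extra care to justify for infinite-dimensional $E$, since boundedness of $\Phi_\omega(A)$ alone does not yield it --- at the cost of the additional limiting argument over $F$. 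Two minor points: Proposition \ref{concretelimop} produces its own local coordinate system, so you do need (as you indicate) to intersect its index set with the one coming from the ball-adapted system of Remark \ref{rem:local-coords-balls}, centred now at $\alpha$; and your assertion that $R\to\infty$ as $\epsilon\to 0$ can fail if $A$ and $B$ are already band operators, but this is harmless, since nothing prevents replacing $R$ by $\max(r,d_\omega(\alpha,\beta),1/\epsilon)$ --- enlarging the window only improves every estimate.
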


The main point of the proof is the observation that if $A$ and $B$ are band operators, then then the number of terms in the sum expressing any matrix entry in the product $AB$ is uniformly bounded. Hence, the norm convergence of the appropriate entries of $A$ and $B$ to the appropriate entries of $\Phi_\omega(A)$ and $\Phi_\omega(B)$ will imply the same for the products. Making this idea precise and working for band-dominated operators requires making approximations along the way.

\begin{proof}
Fix $\alpha,\beta\in X(\omega)$.  Note that the operator $(\Phi_\omega(A)\Phi_\omega(B))_{\alpha\beta}\in \mathcal{L}(E)$ makes sense by the assumption that $A$ and $B$ are rich at $\omega$.  It suffices to show that for each $\epsilon>0$ 
$$
\omega\left(\{x\in X~|~\|(AB)_{t_\alpha(x)t_\beta(x)}-(\Phi_\omega(A)\Phi_\omega(B))_{\alpha\beta}\|<\epsilon\}\right)=1.
$$
As $A$ and $B$ are band-dominated, there exists $r>0$ such that if $x$ is any point in $X$ and $G$ is a subset of $X$ containing the ball $B(x;r)$, then
\begin{equation}\label{comp2}
\|P_{\{x\}}A(1-P_G)\|<\frac{\epsilon}{3\|B\|}.
\end{equation}
On the other hand, Corollary \ref{limopbound} implies that $\Phi_\omega(A)$ and $\Phi_\omega(B)$ are bounded, whence there exists a finite subset $F$ of $X(\omega)$ such that 
\begin{equation}\label{complim}
\|P_{\{\alpha\}}\Phi_\omega(A)(1-P_F)\|<\frac{\epsilon}{3\|B\|}.
\end{equation}
Expanding $F$ if necessary, we may assume that $F$ contains both the balls $B(\alpha;r)$ and $B(\beta;r)$.  Note that line \eqref{complim} implies that 
\begin{align}\label{ep/31}
\| (\Phi_\omega(A)& \Phi_\omega(B))_{\alpha\beta} -(\Phi_\omega(A)P_F\Phi_\omega(B))_{\alpha\beta}\|  \nonumber\\
& = \|P_{\{\alpha\}}\Phi_\omega(A)(1-P_F)\Phi_\omega(B)P_{\{\beta\}} \|  \nonumber \\ 
& \leq \|P_{\{\alpha\}}\Phi_\omega(A)(1-P_F)\|\|\Phi_\omega(B)P_{\{\beta\}}\| < \epsilon/3
\end{align}
using Corollary \ref{limopbound}.  

Let $\{f_y:F\to G(y)\}_{y\in Y_0}$ be a local coordinate system for $F$ as in Definition \ref{def:loco}.  The assumptions on $r$ as in line \eqref{comp2} and the fact that $f_y$ is an isometry imply that for each $y\in Y_0$,
\begin{align}\label{ep/32}
\| & (AB)_{t_\alpha(y)t_\beta(y)} -(AP_{G(y)}B)_{t_\alpha(y)t_\beta(y)}\| \nonumber \\ 
& \leq \|P_{\{t_\alpha(y)\}}A(1-P_{G(y)})\|\|BP_{\{t_\beta(y)\}}\| < \epsilon/3.
\end{align}

On the other hand, for each $\gamma\in F$, set 
$$
Y_{\alpha\gamma}:=\{y\in Y_0\mid \|A_{t_\alpha(y)t_\gamma(y)}-\Phi_\omega(A)_{\alpha\gamma}\|<\frac{\epsilon}{3|F|\|B\|}\}
$$
and similarly 
$$
Y_{\gamma\beta}:=\{y\in Y_0\mid  \|B_{t_\gamma(y)t_\beta(y)}-\Phi_\omega(B)_{\gamma\alpha}\|<\frac{\epsilon}{3|F|\|A\|}\}.
$$
The definition of $\Phi_\omega(A)$ implies that $\omega(Y_{\alpha\gamma})=1$, and similarly $\omega(Y_{\gamma\beta})=1$, whence as $F$ is finite, if we define $Y:=\cap_{\gamma\in F} (Y_{\alpha\gamma}\cap Y_{\gamma\beta})$, then $\omega(Y)=1$.   We have that for every $y\in Y$,
\begin{align*}
\|(AP_{G(y)}B&)_{t_\alpha(y)t_\beta(y)}-(\Phi_\omega(A)P_F\Phi_\omega(B))_{\alpha\beta}\| \\ &\leq \sum_{\gamma\in F}\|A_{t_\alpha(y)t_\gamma(y)}B_{t_\gamma(y)t_\beta(y)}-\Phi_\omega(A)_{\alpha\gamma}\Phi_\omega(B)_{\gamma\beta}\|<
\epsilon/3.
\end{align*}
Combining this with lines \eqref{ep/31} and \eqref{ep/32} gives
\begin{align*}
\| &  (AB)_{t_\alpha(y)t_\beta(y)}-(\Phi_\omega(A) \Phi_\omega(B))_{\alpha\beta}\| \\
  &\leq \|(AB)_{t_\alpha(y)t_\beta(y)} -(AP_{G(y)}B)_{t_\alpha(y)t_\beta(y)}\| \\
  &~~~+\|(AP_{G(y)}B)_{t_\alpha(y)t_\beta(y)}-(\Phi_\omega(A)P_F\Phi_\omega(B))_{\alpha\beta}\|\\
  &~~~+\|(\Phi_\omega(A)P_F\Phi_\omega(B))_{\alpha\beta} - (\Phi_\omega(A) \Phi_\omega(B))_{\alpha\beta}\|<\epsilon
\end{align*}
for all $y\in Y$ as required.
\end{proof}

\begin{corollary}\label{richalg}
$\AXro$ is a closed subalgebra of $\AX$.
\end{corollary}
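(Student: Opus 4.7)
The plan is to verify three things about $\AXro$: it is closed under taking linear combinations, under multiplication, and under norm limits. Linearity is immediate from the fact that $\lim_{x\to\omega}$ is a linear functional on bounded $\mathcal{L}(E)$-valued functions on $X$, so I would spend essentially no time on that step.

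For closure under multiplication, the heavy lifting has already been done in Lemma \ref{prodmatent}. Specifically, if $A,B\in\AXro$, then $AB\in\AX$ (since $\AX$ is an algebra by Definition \ref{banddominated} together with Lemma \ref{decomp}), and Lemma \ref{prodmatent} says that for any compatible family $\{t_\alpha\}$ and any $\alpha,\beta\in X(\omega)$, the limit $\lim_{x\to\omega}(AB)_{t_\alpha(x)t_\beta(x)}$ exists and equals $(\Phi_\omega(A)\Phi_\omega(B))_{\alpha\beta}$. Since every pair of partial translations $t,s$ compatible with $\omega$ can be extended to a compatible family with $t=t_\alpha$, $s=t_\beta$ for appropriate $\alpha=t(\omega)$ and $\beta=s(\omega)$, this shows $AB\in\AXro$.

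The only nontrivial step is norm closedness, and the argument is a standard Cauchy-sequence interchange. Suppose $(A_n)\subset\AXro$ converges in norm to $A\in\AX$, and fix partial translations $t,s$ compatible with $\omega$. For each $n$ let $a_n:=\lim_{x\to\omega}(A_n)_{t(x)s(x)}\in\mathcal{L}(E)$. Since $\|(A_n)_{t(x)s(x)}-(A_m)_{t(x)s(x)}\|=\|P_{\{t(x)\}}(A_n-A_m)P_{\{s(x)\}}\|\leq\|A_n-A_m\|$ uniformly in $x$, passing to the $\omega$-limit gives $\|a_n-a_m\|\leq\|A_n-A_m\|$, so $(a_n)$ is Cauchy in $\mathcal{L}(E)$ and converges to some $a$.

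To finish, I would show $\lim_{x\to\omega}A_{t(x)s(x)}=a$: given $\epsilon>0$, pick $N$ with $\|A_N-A\|<\epsilon/3$ and $\|a_N-a\|<\epsilon/3$, and then
$$
\|A_{t(x)s(x)}-a\|\leq\|A-A_N\|+\|(A_N)_{t(x)s(x)}-a_N\|+\|a_N-a\|,
$$
whose middle term is less than $\epsilon/3$ on a set of $\omega$-measure one. Hence $A\in\AXro$. I expect the main (mild) obstacle to be just bookkeeping in this last step; no deep input beyond Lemma \ref{prodmatent} and the definition of $\omega$-limit is needed.
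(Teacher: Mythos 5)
Your proposal is correct and follows essentially the same route as the paper: closure under multiplication is delegated to Lemma \ref{prodmatent}, linearity is immediate, and norm closedness comes from the uniform convergence of matrix entries (your explicit Cauchy/three-epsilon bookkeeping is just a spelled-out version of the paper's interchange of $\lim_{x\to\omega}$ and $\lim_{n\to\infty}$).
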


\begin{proof}
Note first that if $(A_n)$ is a sequence in $\AX$ that converges in norm to $A$, then the matrix entries of each $A_n$ converge uniformly to those of $A$, i.e.\ 
$$
\sup_{x,y\in X}\|(A_n)_{xy}-A_{xy}\|\to 0 \text{ as } n\to \infty.
$$
It follows from this that if each $A_n$ is rich at $\omega$, then for any $\alpha,\beta\in X(\omega)$ and corresponding partial translations $t_\alpha$, $t_\beta$, we have
$$
\lim_{x\to \omega} A_{t_\alpha(x)t_\beta(x)}=\lim_{n\to\infty}\lim_{x\to\omega}(A_n)_{t_\alpha(x)t_\beta(x)},
$$
and in particular the limit on the left exists.  Hence $A$ is rich at $\omega$, and thus the collection of operators that are rich at $\omega$ is closed.  

It is easy to see that $\AX^{\$,\omega}$ is closed under scalar multiplication and addition.  Lemma \ref{prodmatent} implies that it is closed under multiplication, so we are done.
\end{proof}

The following theorem collects together some important properties of the process of taking a limit operator.

\begin{theorem}\label{richhom}
Let $\omega$ be a non-principal ultrafilter on $X$, and recall that $\AXro$ denotes the Banach algebra of band-dominated operators on $\ell^p_E(X)$ that are rich at $\omega$.  Then the map
$$
\Phi_\omega:\AXro\to \mathcal{L}(\ell^p_E(X(\omega)))
$$
that takes each element of the left-hand-side to its limit operator at $\omega$ has the following properties.   
\begin{enumerate}[(1)]
\item \label{cont} $\Phi_\omega$ is contractive: for all $A\in \AXro$, we have
$$
\|\Phi_\omega(A)\|_{\mathcal{L}(\ell^p_E(X(\omega)))}\leq \|A\|_{\mathcal{L}(\ell^p_E(X))}.
$$
\item \label{prop} $\Phi_\omega$ takes band operators to band operators, and does not increase propagation.
\item \label{homo} $\Phi_\omega$ is a homomorphism.
\end{enumerate}
\end{theorem}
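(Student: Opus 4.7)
The plan is to observe that all three items essentially assemble results already established in the section, so the proof should be short and consist of careful bookkeeping rather than new technical input.

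For item \eqref{cont}, there is nothing left to do: this is exactly the content of Corollary \ref{limopbound}, which already identifies $\Phi_\omega(A)$ with a bounded operator on $\ell^p_E(X(\omega))$ of norm at most $\|A\|$.

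For item \eqref{prop}, suppose $A\in\band$ has $\prop(A)\leq r$ and let $\alpha,\beta\in X(\omega)$ satisfy $d_\omega(\alpha,\beta)>r$. By the argument used in the second paragraph of the proof of Proposition \ref{ulim} (strong discreteness together with the fact that $t_\alpha,t_\beta$ are partial translations), the function $x\mapsto d(t_\alpha(x),t_\beta(x))$ takes only finitely many values on $D_\alpha\cap D_\beta$, and equals the constant $d_\omega(\alpha,\beta)$ on a set of $\omega$-measure one. On this set $d(t_\alpha(x),t_\beta(x))>r$, so $A_{t_\alpha(x)t_\beta(x)}=0$, and therefore the defining $\omega$-limit for $\Phi_\omega(A)_{\alpha\beta}$ is zero. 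Hence $\prop(\Phi_\omega(A))\leq r$, which in particular shows band operators are sent to band operators.

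For item \eqref{homo}, linearity is immediate from the definition of $\Phi_\omega$ entry-wise: norm-limits of sums (respectively scalar multiples) along $\omega$ are sums (respectively scalar multiples) of norm-limits, so $\Phi_\omega(A+B)_{\alpha\beta}=\Phi_\omega(A)_{\alpha\beta}+\Phi_\omega(B)_{\alpha\beta}$ and $\Phi_\omega(\lambda A)_{\alpha\beta}=\lambda\Phi_\omega(A)_{\alpha\beta}$ for all $\alpha,\beta\in X(\omega)$. For multiplicativity, I would first invoke Corollary \ref{richalg} to know that $AB\in \AXro$ whenever $A,B\in\AXro$, so that $\Phi_\omega(AB)$ is defined. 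Then Lemma \ref{prodmatent} says precisely that
\[
\Phi_\omega(AB)_{\alpha\beta}=\lim_{x\to\omega}(AB)_{t_\alpha(x)t_\beta(x)}=(\Phi_\omega(A)\Phi_\omega(B))_{\alpha\beta}
\]
for every $\alpha,\beta\in X(\omega)$. Since two bounded operators on $\ell^p_E(X(\omega))$ whose matrices agree entry-wise must coincide, this gives $\Phi_\omega(AB)=\Phi_\omega(A)\Phi_\omega(B)$.

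No step is a serious obstacle here; the real work sits inside Lemma \ref{prodmatent} and Corollaries \ref{limopbound} and \ref{richalg}, which together do the heavy lifting. The only minor care needed is the strong-discreteness reduction in item \eqref{prop}, and the observation that matrix-entry equality plus boundedness suffices to identify operators in item \eqref{homo}.
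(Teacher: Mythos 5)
Your proposal is correct and follows essentially the same route as the paper: item (1) is Corollary \ref{limopbound}, item (3) is linearity plus Lemma \ref{prodmatent} checked entry-wise, and your contrapositive argument for item (2) (distance function constant equal to $d_\omega(\alpha,\beta)$ on a set of $\omega$-measure one, forcing the relevant matrix entries of $A$ to vanish) is just the reverse phrasing of the paper's direct argument.
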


\begin{proof}
Point \eqref{cont} is just Corollary \ref{limopbound}.  

For point \eqref{prop}, assume that $\Phi_\omega(A)_{\alpha\beta}\neq0$ where $\alpha,\beta\in X(\omega)$ satisfy $d_\omega(\alpha,\beta)=r$.  It suffices to show that there exist $x,y\in X$ such that $d(x,y)=r$ and $A_{xy}\neq 0$.  Say $\|\Phi_\omega(A)_{\alpha\beta}\|=\epsilon>0$.  Then for any partial translations $t_\alpha,t_\beta$ taking $\omega$ to $\alpha$, $\beta$ respectively, we have that if 
$$
Y:=\{x\in X~|~\|A_{t_\alpha(x)t_\beta(x)}-\Phi_\omega(A)_{\alpha\beta}\|\}<\epsilon/2\},
$$
then $\omega(Y)=1$.  Passing to a subset of $Y$ of $\omega$-measure one, we may assume that $d(t_\alpha(x),t_\beta(x))=r$ for all $x\in Y$.  In particular, for any $x\in Y$, $\|A_{t_\alpha(x)t_\beta(x)}\|\geq \epsilon/2>0$ and $d(t_\alpha(x),t_\beta(x))=r$, which forces the propagation of $A$ to be be at least $r$ as required.  

Point \eqref{homo} follows by a check of matrix coefficients: for linearity this is clear, while Lemma \ref{prodmatent} says exactly that for any $\alpha,\beta\in X(\omega)$
$$
\Phi_\omega(AB)_{\alpha\beta}=(\Phi_\omega(A)\Phi_\omega(B))_{\alpha\beta},
$$
and thus multiplication is also preserved.
\end{proof}

\begin{remark}\label{*hom}
Note that if $E$ is a Hilbert space (and $p=2$), then $\AXtwoE$ is a $C^*$-algebra, and each $\AXrotwo$ is a $C^*$-subalgebra (i.e.\ is closed under taking adjoints).  Moreover, the homomorphisms $\Phi_\omega$ are $*$-homomorphisms.
\end{remark}

\begin{definition}\label{opspec}
Let $A\in \AXr$ be a rich band-dominated operator.  The collection 
$$
\opspec(A):=\{\Phi_\omega(A)\in \mathcal{L}(\ell^p_E(X(\omega)))~|~\omega\in \partial X\}
$$
is called the \emph{operator spectrum} of $X$.
\end{definition}

In the next three sections, we will discuss how the operator spectrum can be used to detect Fredholmness.

We conclude this section with some simple examples.  We leave the justifications, which are not difficult, given Proposition \ref{concretelimop} and Examples \ref{limspaceex}, to the reader.

\begin{examples}\label{limopex}
\begin{enumerate}
\item Let $X=\N$, so all limit spaces of $\N$ identify canonically with $\Z$ as in Example \ref{limspaceex}, part (2).  Let $V$ be the unilateral shift operator on $\ell^p(\N)$.  Then all limit operators identify with the bilateral shift on $\ell^p(\Z)$.  
\item Similarly, if $X=\N$, consider $\ell^2(\N)$ as identified with the Hardy space $H^2$ of the disk in the usual way.  Let $T_f$ be a Toeplitz operator on $H^2$ with continuous symbol $f:S^1\to \C$.  Then all limit operators of $T$ correspond to the symbol $f$, considered as acting by convolution on $\ell^2(\Z)$ via its Fourier transform.
\item Let $X$ be a general space, and $f\in \ell^\infty(X)$ act on $\ell^p(X)$ by multiplication.  Then all limit operators are multiplication operators on $\ell^p(X(\omega))$.  In general, the collection could be very complicated.  However, in some cases it simplifies substantially, even if one cannot compute what the limit spaces $X(\omega)$ themselves are.  For example, assume $f$ is \emph{slowly oscillating} in the sense that for all $r,\epsilon>0$ there exists a finite subset $F$ of $X$ such that 
$$
\sup_{x\in X\setminus F}\sup_{d(x,y)\leq r}|f(x)-f(y)|<\epsilon
$$
(compare \cite[Section 2.4]{Rabinovich:2004qy} which looks at this class when $X=\Z^N$, and also \cite[Chapter 5]{Roe:1993lq}, which discusses such functions from the point of view of the \emph{Higson corona} and applications to manifold topology).
Then all limit operators are scalars.  The operator spectrum, as a set of scalars, identifies with the set 
$$
\bigcap_{\text{finite}\,F\subseteq X} \overline{f(X\setminus F)}\subseteq \C.
$$
\end{enumerate}
\end{examples}

\section{The main theorem}\label{sec:mainthm}

We are now ready to state the main theorem of this paper, which characterises when a band-dominated operator is \pfred~in terms of limit operators.  This theorem does not hold without further assumptions on the underlying space: we need to assume \emph{property A} in the sense of Yu \cite[Section 2]{Yu:200ve}.  We will introduce this property in two distinct forms later in the paper at the points it is needed.  Suffice to say for now that many natural examples of metric spaces have property A: for example, many negatively curved spaces \cite{Roe:2005rt}, finite dimensional non-positively curved cube complexes \cite{Brodzki:2009fk}, and all countable subgroups of groups of invertible matrices (over any field) \cite{Guentner:2005xr} have property A.

\begin{theorem}\label{main}
Let $X$ be a space as in Definition \ref{sdbg}, $p\in(1,\infty)$ and let $E$ be a Banach space. Assume that $X$ has property A. Let $A$ be a rich band-dominated operator on $\ell^p_E(X)$. Then the following are equivalent:
\begin{enumerate}[(1)]
\item $A$ is \pfred;
\item all the limit operators $\Phi_\omega(A)$ are invertible, and $\sup_{\omega\in \partial X}\|\Phi_\omega(A)^{-1}\|$ is finite;
\item all the limit operators $\Phi_\omega(A)$ are invertible.
\end{enumerate}
\end{theorem}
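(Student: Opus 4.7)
The plan is to prove the cyclic chain $(1) \Rightarrow (3) \Rightarrow (2) \Rightarrow (1)$; the implication $(2) \Rightarrow (3)$ is immediate, and only $(1) \Rightarrow (3)$ avoids using property A. For $(1) \Rightarrow (3)$ I would first verify that \pcom{} operators have vanishing limit operators at any non-principal $\omega$: any $K \in \KX$ is norm-approximable by $P_F K P_F$ with $F \subseteq X$ finite, and for partial translations $t_\alpha, t_\beta$ compatible with a non-principal $\omega$ the set $\{x \in X : t_\alpha(x) \in F\}$ is finite and so has $\omega$-measure zero, forcing every entry of $\Phi_\omega(K)$ to vanish. Next, using that $\KX$ is a two-sided ideal in $\AX$ (Lemma \ref{comideal}), I would show that a parametrix $B$ for a \pfred{} operator $A \in \AX$ can be chosen in $\AX$, and moreover rich at $\omega$ whenever $A$ is. Applying the contractive homomorphism $\Phi_\omega$ of Theorem \ref{richhom} to $AB - 1, BA - 1 \in \KX$ then yields $\Phi_\omega(A)\Phi_\omega(B) = 1 = \Phi_\omega(B)\Phi_\omega(A)$, so $\Phi_\omega(A)$ is invertible.

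For $(3) \Rightarrow (2)$, suppose for contradiction that $\|\Phi_{\omega_n}(A)^{-1}\| \to \infty$ along some sequence $(\omega_n) \subseteq \partial X$. Following Lindner--Seidel, I would use an iterated-ultrafilter / diagonal construction to extract a further non-principal $\nu \in \partial X$ whose limit operator $\Phi_\nu(A)$ realises the ``limit'' of the $\Phi_{\omega_n}(A)$; since norms of inverses blow up along the sequence, $\Phi_\nu(A)$ cannot be invertible, contradicting $(3)$. Property A of $X$ is the geometric input that ensures this limit-of-limits is again a genuine limit operator of $A$ at a point of $\partial X$, and serves as the replacement for the shift-invariance used in \cite{Lindner:2014aa} over $\Z^N$.

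For $(2) \Rightarrow (1)$, the parametrix construction: using property A, produce a partition-of-unity-type family $\{\phi_i\}$ on $X$ of large propagation and slow variation, subordinate to a locally finite cover by finite patches $G_i$, where each $G_i$ is identified via Proposition \ref{concretelimop} (and Definition \ref{def:loco}) with a large ball in some limit space $X(\omega_i)$. Pull each inverse $\Phi_{\omega_i}(A)^{-1}$ back along the local coordinate system to an operator $B_i$ essentially supported on $G_i$, and assemble $B := \sum_i \phi_i B_i \psi_i$ for matching cut-offs $\psi_i$. The uniform bound $\sup_\omega \|\Phi_\omega(A)^{-1}\| < \infty$ keeps $B$ bounded; slow variation forces $AB - 1$ and $BA - 1$ to have arbitrarily small norm off a neighbourhood of the diagonal; and the remaining finite matrix entries, combined with an escape-to-infinity argument, exhibit $AB - 1$ and $BA - 1$ as \pcom{}.

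The main obstacle will be this parametrix construction in $(2) \Rightarrow (1)$: the limit spaces $X(\omega_i)$ are only abstractly defined via ultrafilters, so every local inverse has to be transported back along the near-isometries of Proposition \ref{concretelimop}, and $\ell^p$-operator norms and propagation errors must be controlled uniformly as the patch sizes grow. A secondary difficulty is the limit-of-limits argument in $(3) \Rightarrow (2)$: since $X$ carries no group action, one cannot simply translate ultrafilters, and property A is precisely the geometric replacement that makes the extraction close.
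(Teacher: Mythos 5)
Your overall architecture $(1)\Rightarrow(3)\Rightarrow(2)\Rightarrow(1)$ is logically sound, and your sketches of $(3)\Rightarrow(2)$ and $(2)\Rightarrow(1)$ are recognisably the Lindner--Seidel and partition-of-unity strategies that the paper uses. But there is a genuine gap in your proof of $(1)\Rightarrow(3)$: you assert that a parametrix $B$ for a \pfred\ operator ``can be chosen in $\AX$, and moreover rich at $\omega$ whenever $A$ is,'' and this is exactly the hard point. The definition of \pfred\ only supplies a bounded operator $B$ on $\ell^p_E(X)$; inverse-closedness of the band-dominated operators (let alone of the rich ones) inside $\mathcal{L}(\ell^p_E(X))$ is not available for general $p$ and general Banach $E$ -- the paper only obtains a band-dominated parametrix as a \emph{byproduct} of the full proof, using property A (Remark \ref{parabdo}), so invoking it here is circular. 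Even granting $B\in\AX$, richness of $B$ does not follow from richness of $A$: when $E$ is infinite dimensional the matrix entries of $B$ need not have norm-convergent ultralimits, so $\Phi_\omega(B)$ may simply not exist and you cannot apply the homomorphism $\Phi_\omega$ to the relations $AB-1,\,BA-1\in\KX$. The paper circumvents all of this (and gets the stronger implication $(1)\Rightarrow(2)$ with no property A hypothesis) by never applying $\Phi_\omega$ to $B$: Lemma \ref{lem:closevectors} transports finitely supported unit vectors $v$ for $\Phi_\omega(A)$ to unit vectors $w$ on $X$ supported far from any prescribed finite set, with $\|Aw\|\approx\|\Phi_\omega(A)v\|$; the estimate $\|B\|\|Aw\|\ge\|w\|-\|K_2 w\|$ then bounds $\Phi_\omega(A)$ below by $1/\|B\|$, and a dual argument on $\ell^q_{E^*}$ gives surjectivity.

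A secondary, less serious issue is your description of $(3)\Rightarrow(2)$. The extraction of a limit-of-limits point and the identification of its limit operator do \emph{not} need property A (that is Lemma \ref{lem:limops-compact} together with the basepoint-change Lemma \ref{lem:shifts}). Property A enters, in the guise of the metric sparsification property, to prove \emph{lower norm localisation} (Proposition \ref{prop:lower-norm} and Corollary \ref{lem:LNL-opspec-uniform}): the lower norm of each $\Phi_{\omega_n}(A)$ must be approximately witnessed by vectors of uniformly bounded support, and one must then recentre each $X(\omega_n)$ at a point $\alpha_n$ near which this witnessing happens simultaneously for all scales (Claim \ref{ind step}). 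Without that localisation the infimum of $\|\Phi_{\omega_n}(A)v\|/\|v\|$ could be achieved only by vectors escaping to infinity in $X(\omega_n)$ and would be invisible in the limit operator at $\nu$. Your sketch of $(2)\Rightarrow(1)$ is essentially the paper's construction (local inverses $B_i$ from Lemma \ref{localinverses} glued as $\sum_i\phi_i^{p/q}B_i\phi_i$, corrected by a Neumann series), modulo the fact that the error term is controlled globally in norm via the commutator estimate of Lemma \ref{almcom} rather than ``off a neighbourhood of the diagonal,'' and that one must first reduce to rich \emph{band} operators via Theorem \ref{denserich}.
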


\begin{remark}\label{parabdo}
Note that the definition of $A$ being a \pfred\ operator (Definition \ref{Xfredholm} above) requires the existence of a bounded operator $B$ on $\ell^p_E(X)$ that is an inverse for $A$ modulo $\KX$.  Inspection of the proof of Theorem \ref{main} shows that we can do a bit better: $B$ can be taken to be a band-dominated operator, provided $X$ has property A.
\end{remark}

At the end of this section, we give the proof of (1) implies (2), which follows along similar lines to that of \cite[Proposition 1.2.9]{Rabinovich:2004qy}, and does not require the property A assumption.  The implication (2) implies (3) is of course trivial.  

In the next two sections, we prove (3) implies (2) and (2) implies (1) completing the proof of the theorem.  Neither of these results seems to admit an easy proof: in particular, both make non-trivial uses of property A.  They do this in quite different guises, however: (3) implies (2) uses a version of the metric sparsification property of Chen, Tessera, Wang, and Yu \cite{Chen:2008so}, while (2) implies (1) uses the existence of `slowly varying' partitions of unity as introduced by Guentner and Dadarlat \cite{Dadarlat:2007qy}.

We do not know if property A is necessary for the implication (3) implies (2), although we suspect it probably is.  It is certainly necessary for the implication (2) implies (1), as is discussed in Section \ref{sec:ghosts}.

\medskip

Embarking now on the proof of Theorem \ref{main}, (1) implies (2), we separate a part of the proof as an auxiliary Lemma:

\begin{lemma}\label{lem:closevectors}
Let $A$ be band-dominated operator on $\ell^p_E(X)$, rich at $\omega\in\partial X$. For any finitely supported unit vector $v\in \ell^p_E(X(\omega))$, any finite subset $G\subseteq X$ and any $\epsilon>0$, there exists a unit vector $w\in\ell^p_E(X)$, such that 
$$
\left|\|Aw\|-\|\Phi_\omega(A)v\|\right|<\epsilon
$$
and $\supp(w)\cap G=\emptyset$.
\end{lemma}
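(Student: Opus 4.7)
The plan is to use Proposition \ref{concretelimop} to realize $v$ concretely as a vector $w = U_y v$ on $X$, supported near a point $y \in X$ chosen far from $G$, and then estimate $\|Aw\|$ in terms of $\|\Phi_\omega(A) v\|$ via a three-term triangle inequality. The support condition $\supp(w) \cap G = \emptyset$ will follow essentially for free from non-principality of $\omega$ and bounded geometry of $X$, which together make any fixed bounded neighborhood of $G$ an $\omega$-negligible subset of $X$.

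First I would choose $r > 0$ and a band operator $A' \in \band$ with $\prop(A') \leq r$ and $\|A - A'\| < \epsilon/3$, using band-dominatedness of $A$. Put $r_0 := \max_{\alpha \in \supp(v)} d_\omega(\alpha,\omega)$ and choose $R \geq r_0 + r$ large enough that also $\|(1 - P_{B_{X(\omega)}(\omega;R)}) \Phi_\omega(A) v\| < \epsilon/3$, which is possible since $\Phi_\omega(A) v \in \ell^p_E(X(\omega))$ has small tails outside large balls. Set $F := B_{X(\omega)}(\omega;R)$, so that $\supp(v) \subseteq F$. Fix a compatible family $\{t_\alpha\}$ for $\omega$ with $t_\omega$ equal to the identity on $X$; this ensures each local coordinate will satisfy $f_y(\omega) = t_\omega(y) = y$. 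Now apply Proposition \ref{concretelimop} together with Remark \ref{rem:local-coords-balls} with tolerance $\epsilon/3$ to obtain a local coordinate system $\{f_y : F \to G(y)\}_{y \in Y}$ with $G(y) = B_X(y;R)$ and $\|U_y^{-1} P_{G(y)} A P_{G(y)} U_y - P_F \Phi_\omega(A) P_F\| < \epsilon/3$ for each $y \in Y$, where $U_y$ is the isometry induced by $f_y$. Since $\bigcup_{g \in G} B_X(g;R)$ is finite by bounded geometry and $\omega$ is non-principal, I can pick $y \in Y$ with $d(y,G) > R$, so that $G(y) \cap G = \emptyset$.

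Set $w := U_y v$, a unit vector with $\supp(w) \subseteq f_y(\supp(v)) \subseteq B_X(y;r_0) \subseteq G(y)$; in particular $\supp(w) \cap G = \emptyset$, as required. For the norm estimate, the buffer $R \geq r_0 + r$ gives $\supp(A' w) \subseteq B_X(y;r_0+r) = G(y)$, so $\|(1 - P_{G(y)}) A w\| = \|(1 - P_{G(y)})(A - A')w\| < \epsilon/3$, yielding $|\|Aw\| - \|P_{G(y)} A w\|| < \epsilon/3$. Writing $P_{G(y)} A w = P_{G(y)} A P_{G(y)} U_y v$ and using that $U_y^{-1}$ is isometric, the approximation from Proposition \ref{concretelimop} gives $|\|P_{G(y)} A w\| - \|P_F \Phi_\omega(A) v\|| < \epsilon/3$, while the choice of $R$ gives $|\|P_F \Phi_\omega(A) v\| - \|\Phi_\omega(A) v\|| < \epsilon/3$. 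Summing these three estimates finishes the proof. I don't anticipate a serious obstacle; the only real subtlety is arranging that the local coordinates actually send $\omega$ to $y$ so that the buffer geometry lines up in $X$, which is why it is important to take $F$ to be a metric ball centered at $\omega$ (invoking Remark \ref{rem:local-coords-balls}) and to normalize $t_\omega = \mathrm{id}$.
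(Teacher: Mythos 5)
Your proof is correct and follows essentially the same route as the paper's: approximate $A$ by a band operator, truncate to a large ball $B(\omega;R)$ on which $\Phi_\omega(A)v$ is essentially supported, transport $v$ to $w=U_yv$ via the local coordinates of Proposition \ref{concretelimop} (with $G(y)=B(y;R)$ as in Remark \ref{rem:local-coords-balls}), and combine three $\epsilon/3$-estimates, with the support condition coming from the fact that $Y$ is infinite while bounded geometry makes an $R$-neighbourhood of $G$ finite. Your explicit normalisation $t_\omega=\mathrm{id}$ is a point the paper leaves implicit, and is a welcome clarification rather than a deviation.
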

§
\begin{proof}
Fix $\epsilon>0$ and a finitely supported unit vector $v\in \ell^p_E(X(\omega))$, say supported in some ball $B(\omega,r)\subseteq X(\omega)$. Since $\Phi_\omega(A)$ is a bounded operator on $\ell^p_E(X(\omega))$, there exists $r'\geq r$, such that for any $R\geq r'$
\begin{equation*}
\left\|P_{B(\omega;R)}\Phi_\omega(A)v - \Phi_\omega(A)v\right\| < \epsilon/3.
\end{equation*}
As $A$ is band-dominated, there is a band operator\footnote{We cannot assume that it is rich without assuming property A and appealing to Theorem \ref{denserich}.} $A'$, such that $\|A-A'\|<\epsilon/6$.

Fix now $R=\max\{r',r+2\prop(A')\}$. For any unit vector $w\in \ell^p_E(X)$, supported in a ball $B(x;r)\subseteq X$ for some $x\in X$, we have that
\begin{multline*}
\left\|P_{B(x;R)}Aw - Aw\right\| \leq \left\|P_{B(x;R)}Aw-P_{B(x;R)}A'w\right\| +\\
+  \left\|P_{B(x;R)}A'w-A'w\right\| + \left\|A'w-Aw\right\| < \epsilon/3,
\end{multline*}
since $P_{B(x;R)}A'w=A'w$ as $A'$ can spread the support of $w$ by at most $2\prop(A')$.

Using Proposition \ref{concretelimop}, there is a local coordinate system $\{f_y:B(\omega;R)\to G(y)\}_{y\in Y}$ and corresponding collection of linear isometries $\{U_y:\ell^p_E(B(\omega;R))\to \ell^p_E(G(y))\}_{y\in Y}$, such that 
$$
\|U_y^{-1}P_{G(y)}AP_{G(y)}U_y-P_F\Phi_\omega(A)P_F\|<\epsilon/3.
$$
In particular, for each $y\in Y$ the vector $w_y:=U_yv\in \ell^p_E(X)$ is supported in $B(y;r)$ and so
$$
\left|\|P_{B(y;R)}Aw_y\|-\|P_{B(\omega;R)}\Phi_\omega(A)v\|\right|<\epsilon/3.
$$
Consequently, putting the estimates together,
$$
\left|\|Aw_y\|-\|\Phi_\omega(A)v\|\right|<\epsilon,
$$
for every $y\in Y$. As $Y$ is infinite and $X$ has bounded geometry, we can arrange that $\supp(w_y)\cap G\subseteq B(y;r)\cap G=\emptyset$ for any given finite $G\subseteq X$.
\end{proof}

\begin{proof}[Proof of Theorem \ref{main}, (1) implies (2)]
Let $A$ be a rich band-dominated \pfred\ operator on $\ell^p_E(X)$, so there exists a bounded operator $B$ on $\ell^p_E(X)$ such that $K_1:=AB-1$ and $K_2:=BA-1$ are \pcom~operators.   Let $\omega$ be a non-principal ultrafilter on $X$. We will first show that $\Phi_\omega(A)$ is bounded below independently of $\omega$; more precisely, we will show that $\|\Phi_\omega(A)v\|\geq 1/\|B\|$ for all finitely supported unit vectors $v\in\ell^p_E(X(\omega))$. Fix then some finitely supported $v\in\ell^p_E(X(\omega))$.

Take $\epsilon>0$. Then $\|K_2P_G-K_2\|<\epsilon$ for some finite $G\subseteq X$. Hence any vector $w\in\ell^p_E(X)$ whose support misses $G$ will satisfy $\|K_2w\|<\epsilon$. Now Lemma \ref{lem:closevectors} delivers a unit vector $w\in\ell^p_E(X)$ with 
$$
\left|\|Aw\|-\|\Phi_\omega(A)v\|\right|<\epsilon
$$
and satisfying $\|K_2w\|<\epsilon$. We compute
$$
\|B\|\|Aw\|\geq\|BAw\| = \|(1-K_2)w\|\geq\|w\|-\|K_2w\|\geq 1-\epsilon.
$$
Hence
$$
\|\Phi_\omega(A)v\|\geq \|Aw\|-\epsilon\geq\frac{1-\epsilon}{\|B\|}-\epsilon.
$$
Letting $\epsilon\to 0$ shows that $\|\Phi_\omega(A)v\|\geq1/\|B\|$ as required.

Let now $q$ be the conjugate index of $p$, i.e.\ $q$ satisfies $1/q+1/p=1$.  Note that the adjoints of \pcom~ operators on $\ell^p_E(X)$ are \pcom~on $\ell^q_{E^*}(X)$, and that adjoints of rich band-dominated operators on $\ell^p_E(X)$ are rich band-dominated operators on $\ell^q_{E^*}(X)$.  An analogous argument to the above then shows that $\Phi_\omega(A)^*$ is bounded below by $1/\|B\|$ as an operator on $\ell^p_{E^*}(X(\omega))$, where $q$ is the conjugate index of $p$.  It follows that $\Phi_\omega(A)$ is invertible, and that the norm of its inverse is at most $\|B\|$, as required.
\end{proof}

\section{Partitions of unity, and constructing parametrices}\label{sec:parametrices}

In this section, we prove the implication (2) implies (1) from Theorem \ref{main}.

Throughout this section $X$ is a space as in Definition \ref{sdbg}, $E$ is a Banach space, and $p$ is a fixed number in $(1,\infty)$.  We set $q\in (1,\infty)$ to be the conjugate index of $p$, i.e.\ the unique number such that $1/p+1/q=1$.  

\subsection{Partitions of unity and constructing operators}

\begin{definition}\label{propa}
A \emph{metric $p$-partition of unity} on $X$ is a collection $\{\phi_i:X\to [0,1]\}$ of functions on $X$ satisfying the following conditions.
\begin{enumerate}
\item There exists $N\in \N$ such that for each $x\in X$, at most $N$ of the numbers $\phi_i(x)$ are non-zero.
\item The $\phi_i$ have uniformly bounded supports, i.e.\ 
$$
\sup_i(\diam(\{x\in X~|~\phi_i(x)\not=0\})) < \infty.
$$
\item For each $x\in X$, $\sum_{i\in I} (\phi_i(x))^p=1$.
\end{enumerate}  

Let $r,\epsilon$ be positive numbers.  A metric $p$-partition of unity $\{\phi_i\}$ has \emph{$(r,\epsilon)$-variation} if whenever $x,y\in X$ satisfy $d(x,y)\leq r$, then 
$$
\sum_{i\in I}|\phi_i(x)-\phi_i(y)|^p<\epsilon^p.
$$

The space $X$ has \emph{property A} if for any $r,\epsilon>0$ there exists a metric $p$-partition of unity with $(r,\epsilon)$-variation.  
\end{definition}

\begin{remark}
This definition does not depend on $p$. It is equivalent to the `standard' definition of property A by \cite[Theorem 1.2.4]{Willett:2009rt}. More precisely, the item (6) in this Theorem is precisely the above definition for $p=1$; however the proofs work (with the obvious changes) for any $p\in[1,\infty)$.
\end{remark}

In the rest of this section, we will show how to use partitions of unity to construct a parametrix for an operator satisfying the assumptions of part (2) of Theorem \ref{main}.

\begin{lemma}\label{construct}
Let $\{\phi_i\}_{i\in I}$ be a metric $p$-partition of unity on $X$ as in Definition \ref{propa}.

Let $J$ be a subset of the index set $I$, and assume that we have been given a collection of bounded operators $\{B_i\}_{i\in J}$ on $\ell^p_E(X)$ such that $M:=\sup_i\|B_i\|$ is finite.  Then the sum
$$
\sum_{i\in J}\phi_i^{p/q}B_i\phi_i
$$
converges strongly to a band operator of norm at most $M$ on $\ell^p_E(X)$.
\end{lemma}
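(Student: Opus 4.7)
The plan is to exploit the precise exponent $p/q$ in the statement: the algebraic identity $p/q + 1 = p$ (equivalent to $1/p+1/q=1$) is exactly what is needed to pair Hölder's inequality with the unit-sum condition $\sum_i \phi_i(x)^p = 1$. I would organise the argument into three pieces: strong convergence, norm bound, and the band-operator conclusion.

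First, for strong convergence I would check that the partial sums $\sum_{i \in J'}\phi_i^{p/q}B_i\phi_i$ stabilise on any finitely supported vector $\xi \in \ell^p_E(X)$. If $\xi$ is supported on a finite set $F$, condition (1) of Definition \ref{propa} says that at most $N$ of the $\phi_i(x)$ are non-zero for any fixed $x$, and so $\phi_i|_F \neq 0$ for at most $N|F|$ indices $i$; hence only finitely many terms act non-trivially on $\xi$. It therefore suffices to prove a uniform norm bound on the finite partial sums and extend by density.

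For the norm bound, take $\xi \in \ell^p_E(X)$ and a finite $J' \subseteq J$, and apply Hölder's inequality pointwise on $X$ with conjugate exponents $q$ and $p$:
\[
\Bigl\|\sum_{i \in J'}\phi_i(x)^{p/q}(B_i\phi_i\xi)(x)\Bigr\|_E \leq \Bigl(\sum_{i \in J'}\phi_i(x)^{p}\Bigr)^{1/q}\Bigl(\sum_{i \in J'}\|(B_i\phi_i\xi)(x)\|_E^p\Bigr)^{1/p},
\]
where I have used $(p/q)\cdot q = p$. The first factor is at most $1$ by condition (3). Raising to the $p$-th power, summing over $x$, and using $\|B_i\|\leq M$ yields
\[
\Bigl\|\sum_{i \in J'}\phi_i^{p/q}B_i\phi_i\xi\Bigr\|_p^p \leq \sum_{i \in J'}\|B_i\phi_i\xi\|_p^p \leq M^p\sum_{i \in J'}\|\phi_i\xi\|_p^p \leq M^p \sum_x \|\xi(x)\|_E^p \sum_i \phi_i(x)^p = M^p\|\xi\|_p^p,
\]
using the unit-sum identity once more. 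This gives a norm bound independent of $J'$, so strong convergence to an operator of norm at most $M$ follows.

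Finally, the limit is a band operator because its matrix entry at $(x,y)$ equals $\sum_i \phi_i(x)^{p/q}(B_i)_{xy}\phi_i(y)$, which vanishes unless $x,y$ both lie in the support of some common $\phi_i$; by condition (2) of Definition \ref{propa} this forces $d(x,y)$ to be bounded by the uniform diameter of the supports, giving finite propagation. Each such matrix entry is a sum of at most $N$ terms each bounded by $M$, so matrix entries are uniformly bounded. The only real subtlety in the whole argument is making the Hölder step arithmetically clean: the exponent $p/q$ is arranged so the Hölder factor $\bigl(\sum_i \phi_i(x)^p\bigr)^{1/q}$ collapses to $1$, which is precisely what prevents an unwanted factor like $N^{1/q}$ from appearing in the final bound.
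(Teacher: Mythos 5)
Your proof is correct. It rests on the same two ingredients as the paper's argument --- H\"older's inequality over the index set together with the exponent identity $(p/q)\cdot q=p$ and the normalisation $\sum_i\phi_i(x)^p=1$ --- but you deploy them differently. The paper tests $\sum_i\phi_i^{p/q}B_i\phi_i v$ against vectors $w\in\ell^q_{E^*}(X)$, applies H\"older over $J$ to the resulting sum $\sum_i\|\phi_iv\|\,\|\phi_i^{p/q}w\|$, and then interchanges the sums over $i$ and $x$ to collapse the partition of unity; you instead apply H\"older over $J$ pointwise at each $x\in X$, use $\sum_i\phi_i(x)^p\le 1$ to discard the first factor on the spot, and then sum over $x$. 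Your route is slightly more direct in that it never invokes the duality $\ell^p_E(X)^*\cong\ell^q_{E^*}(X)$; the paper's dual-pairing template has the advantage that it is reused essentially verbatim in the companion estimate for $\sum_i\phi_i^{p/q}B_i[\phi_i,A]$ (Lemma \ref{almcom}), where the commutator factor is no longer controlled pointwise and the symmetric treatment of the two factors is convenient. Your verification of the band property (support overlap forces $d(x,y)\le\sup_i\diam(\supp\phi_i)$, and each entry is a sum of at most $N$ terms of norm at most $M$) matches the paper's parenthetical claim about the propagation bound.
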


\begin{proof}
The conditions on the partition of unity imply that if $v\in \ell^p_E(X)$ has finite support, then only finitely many of the terms in the sum $\sum_{i\in J}\phi_i^{p/q}B_i\phi_iv$ are non-zero. Hence this sum represents a well-defined vector in $\ell^p_E(X)$.  To establish strong convergence, it thus suffices to show that the assignment $v\mapsto \sum_{i\in J}\phi_i^{p/q}B_i\phi_iv$ (defined on the dense subspace of $\lp(X)$ consisting of functions with finite support) is a bounded operator. We show that for any $v\in \ell^p_E(X)$ of finite support, we have that
$$
\Big\|\sum_{i\in J}\phi_i^{p/q}B_i\phi_iv\Big\|\leq M\|v\|
$$
(which of course also establishes the norm bound).  In fact, noting that the dual of $\ell^p_E(X)$ is $\ell^q_{E^*}(X)$, where $q$ is the conjugate index to $p$, it suffices to show that if $v\in \ell^p_E(X)$ and $w\in \ell^q_{E^*}(X)$ have finite support, then 
$$
\Big|\Big\langle \sum_{i\in J}\phi_i^{p/q}B_i\phi_iv,w\Big\rangle \Big|\leq M\|v\|\|w\|,
$$
where $\langle,\rangle$ denotes the canonical pairing between $\ell^p_E(X)$ and $\ell^q_{E^*}(X)$.  

Now, using that the adjoint of a multiplication operator from $l^\infty(X)$ acting on $\ell^p_E(X)$ is the same function acting by multiplication on $\ell^q_{E^*}(X)$, we have 
\begin{align*}
\Big|\Big\langle \sum_{i\in J}\phi_i^{p/q}B_i\phi_iv,w\Big\rangle \Big| & \leq \sum_{i\in J} |\langle B_i\phi_i,\phi_i^{p/q}w\rangle| \\
& \leq \sum_{i\in J} \|B_i\|\|\phi_iv\|\|\phi_i^{p/q}w\| \\
& \leq M\sum_{i\in J} \|\phi_iv\|\|\phi_i^{p/q}w\|.
\end{align*}
H\"{o}lder's inequality applied to the measure space ($J$, counting measure) bounds this above by
\begin{align*}
M\Big(\sum_{i\in J} & \|\phi_iv\|^p\Big)^{1/p} \Big(\sum_{i\in J}\|\phi_i^{p/q}w\|^q\Big)^{1/q} \\& =M\Big(\sum_{i\in J}\sum_{x\in X} (\phi_i(x))^p\|v(x)\|^p\Big)^{1/p}\Big(\sum_{i\in J}\sum_{x\in X}(\phi_i(x))^{p}\|w(x)\|^q\Big)^{1/q} \\
& =M\Big(\sum_{x\in X} \Big(\sum_{i\in J} (\phi_i(x))^p\Big)\|v(x)\|^p\Big)^{1/p}\Big(\sum_{x\in X} \Big(\sum_{i\in J} (\phi_i(x))^p\Big)\|w(x)\|^q\Big)^{1/q} \\
& \leq M\Big(\sum_{x\in X} \|v(x)\|^p\Big)^{1/p}\Big(\sum_{x\in X} \|w(x)\|^q\Big)^{1/q} \\
& =M\|v\|\|w\|,
\end{align*}
which establishes the norm bound.  

The fact that $\sum_{i\in J}\phi_i^{p/q}B_i\phi_i$ is a band operator (with propagation at most $\sup(\diam(\supp(\phi_i)))$) follows directly on looking at matrix coefficients.
\end{proof}

\begin{lemma}\label{almcom}
Let $A$ be a band operator on $\ell^p_E(X)$ with propagation at most $r$, and let $N=\sup_{x\in X}|B(x;r)|$.  Let $\{\phi_i\}_{i\in I}$ be a metric $p$-partition of unity on $X$ with $(r,\epsilon)$-variation in the sense of Definition \ref{propa}.  

Let $J$ be a subset of the index set $I$, and assume that we have been given a collection of bounded operators $\{B_i\}_{i\in J}$ on $\ell^p_E(X)$ such that $M:=\sup_i\|B_i\|$ is finite.  Then the sum
$$
\sum_{i\in J} \phi_i^{p/q}B_i[\phi_i,A]
$$
converges strongly to a band operator of norm at most $\epsilon N\|A\| M$.
\end{lemma}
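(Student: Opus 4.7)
My plan is to mirror the structure of Lemma~\ref{construct}'s proof, with the commutator $[\phi_i,A]$ playing the role of the ``plain'' right factor $\phi_i$ there, and contributing an extra factor of $\epsilon N\|A\|$ via the $(r,\epsilon)$-variation of $\{\phi_i\}$.

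First I would check that for $v\in\ell^p_E(X)$ of finite support only finitely many terms $\phi_i^{p/q}B_i[\phi_i,A]v$ are nonzero: such a term requires $\supp\phi_i$ to meet the finite set $\{x\in X : d(x,\supp v)\le r\}$, which happens for at most $N\cdot|\{x : d(x,\supp v)\le r\}|$ indices $i$ by condition~(1) of Definition~\ref{propa}. Strong convergence then reduces to a uniform operator-norm bound on partial sums which, via $\ell^p$--$\ell^q$ duality (with $q$ the conjugate index of $p$), is equivalent to controlling $\bigl|\bigl\langle\sum_i\phi_i^{p/q}B_i[\phi_i,A]v,\,w\bigr\rangle\bigr|$ for finitely supported $v\in\ell^p_E(X)$ and $w\in\ell^q_{E^*}(X)$. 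Moving $\phi_i^{p/q}$ onto $w$, bounding $\|B_i\|\le M$, and applying H\"older on the index set $J$ bounds this pairing by $M\bigl(\sum_i\|[\phi_i,A]v\|^p\bigr)^{1/p}\bigl(\sum_i\|\phi_i^{p/q}w\|^q\bigr)^{1/q}$. The second factor equals $\|w\|$ exactly as in Lemma~\ref{construct}, using $\sum_i\phi_i^p=1$.

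The heart of the estimate is the first factor. Writing $([\phi_i,A]v)(x)=\sum_y A_{xy}(\phi_i(x)-\phi_i(y))v(y)$ with only $\le N$ nonzero terms (since $\prop(A)\le r$), another application of H\"older on the inner sum yields $\|([\phi_i,A]v)(x)\|^p\le N^{p/q}\|A\|^p\sum_{y\in B(x;r)}|\phi_i(x)-\phi_i(y)|^p\|v(y)\|^p$. Summing over $x$, then over $i$, interchanging the sums, and invoking the $(r,\epsilon)$-variation (so $\sum_i|\phi_i(x)-\phi_i(y)|^p\le\epsilon^p$ whenever $d(x,y)\le r$) together with the bound $|B(y;r)|\le N$ produces $\sum_i\|[\phi_i,A]v\|^p\le\epsilon^p N^{p/q+1}\|A\|^p\|v\|^p=\epsilon^p N^p\|A\|^p\|v\|^p$, where the last equality uses the identity $p/q+1=p$. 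Combining the two factors gives the advertised bound $\epsilon N\|A\|M\|v\|\|w\|$.

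For the band-operator assertion I would inspect matrix entries directly: $(\phi_i^{p/q}B_i[\phi_i,A])_{xy}\ne 0$ forces $x\in\supp\phi_i$ together with the existence of some $z$ satisfying $A_{zy}\ne 0$ and $\phi_i(z)\ne\phi_i(y)$; the latter places at least one of $z,y$ in $\supp\phi_i$ and, combined with $d(z,y)\le r$, yields $d(x,y)\le r+\sup_i\diam(\supp\phi_i)$, uniformly in $i$. Summing over $i$ preserves this support condition, so the strong limit is a band operator of propagation at most $r+\sup_i\diam(\supp\phi_i)$. The main subtlety I expect is the bookkeeping with the exponents $p$ and $q$ across the two H\"older applications; beyond that, everything is essentially forced by the structure of Lemma~\ref{construct}'s argument.
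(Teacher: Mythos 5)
Your proof is correct, and while it shares the paper's outer skeleton -- finitely many nonzero terms for finitely supported $v$, reduction via $\ell^p$--$\ell^q$ duality to a pairing bound, H\"older over the index set $J$, and $\sum_i\phi_i^p=1$ collapsing the second factor to $\|w\|$ -- you obtain the central estimate on $\sum_{i\in J}\|[\phi_i,A]v\|^p$ by a genuinely different route. The paper invokes Lemma \ref{decomp} to write $A=\sum_{k=1}^N f_kV_k$ with the $V_k$ partial translation operators; each $\phi_i$ commutes with the $f_k$, each row of $[\phi_i,V_k]$ has at most one nonzero entry (so the $(r,\epsilon)$-variation applies with no further H\"older), and the factor $N^p$ enters through the triangle inequality over the $N$ summands. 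You instead expand $([\phi_i,A]v)(x)=\sum_{y\in B(x;r)}(\phi_i(x)-\phi_i(y))A_{xy}v(y)$ directly, apply H\"older to the inner sum over $y$ (contributing $N^{p/q}$, using $\|A_{xy}\|\le\|A\|$), interchange the sums over $x$, $y$ and $i$, and finish with the variation bound together with $|B(y;r)|\le N$; since $p/q+1=p$ this yields exactly the same constant $\epsilon^pN^p\|A\|^p\|v\|^p$. Your version avoids Lemma \ref{decomp} entirely and concentrates all the exponent bookkeeping in one H\"older step; the paper's version trades that for a commutator computation in which each term is a single partial translation and hence needs no inner H\"older at all. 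One minor notational caution: the $N$ of condition (1) in Definition \ref{propa} (the multiplicity of the partition of unity) is a different constant from the $N=\sup_x|B(x;r)|$ of the lemma; your finiteness count for strong convergence uses the former, which is correct but should not be conflated with the latter.
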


\begin{proof}
Similarly to the proof of Lemma \ref{construct}, it suffices for the convergence and norm estimate to show that if $v\in \ell^p_E(X)$ and $w\in \ell^q_{E^*}(X)$ have finite support, then 
$$
\Big|\Big\langle \sum_{i\in J} \phi_i^{p/q}B_i[\phi_i,A]v,w\Big\rangle \Big|\leq \epsilon N \|A\|M\|v\|\|w\|.
$$
We may bound the left hand side above by
\begin{align*}
\Big|\Big\langle \sum_{i\in J} \phi_i^{p/q}B_i[\phi_i,A]v,w\Big\rangle \Big| & \leq \sum_{i\in J}|\langle [\phi_i,A]v,B_i^*\phi_i^{p/q}w\rangle| \\
& \leq \sum_{i\in J}\|[\phi_i,A]v\|\|B_i^*\phi_i^{p/q}w\| \\
& \leq M \sum_{i\in J}\|[\phi_i,A]v\|\|\phi_i^{p/q}w\| \\
& \leq M\Big(\sum_{i\in J} \|[\phi_i,A]v\|^p\Big)^{1/p}\Big(\sum_{i\in J}\|\phi_i^{p/q}w\|^q\Big)^{1/q},
\end{align*}
where the last inequality is H\"{o}lder's inequality.  Using the same argument as in Lemma \ref{construct}, the second factor is bounded above by $\|w\|$, so we see that 
\begin{equation}\label{halfway}
\Big|\Big\langle \sum_{i\in J} \phi_i^{p/q}B_i[\phi_i,A]v,w\Big\rangle \Big|\leq M\|w\|\Big(\sum_{i\in J} \|[\phi_i,A]v\|^p\Big)^{1/p}.
\end{equation}

For continuing with the estimates, we decompose $A$ as in Lemma \ref{decomp}, i.e.
\begin{equation}\label{almcomdecomp}
A=\sum_{k=1}^N f_kV_k,
\end{equation}
where each $f_k$ is an operator in $l^\infty(X,\mathcal{L}(E))$ with norm at most $\|A\|$, and each $V_k$ is a partial translation operator on $\ell^p_E(X)$ with propagation at most $r$ (compare Example \ref{bandex}).

We now focus attention on the term $\Big(\sum_{i\in J} \|[\phi_i,A]v\|^p\Big)^{1/p}$.  Fix $i\in J$ for the moment.  Computing, using the sum in line \eqref{almcomdecomp}, that $\phi_i$ commutes with each $f_k$, and that $\|f_k\|\leq \|A\|$ for each $k$ gives
\begin{equation}\label{startcom}
\|[\phi_i,A]v\|^p=\Big\|\sum_{k=1}^N f_k[\phi_i,V_k]v\Big\|^p\leq \|A\|^pN^p \max_{k\in \{1,...,N\}}\|[\phi_i,V_k]v\|^p.
\end{equation}
Write then $V=V_k$ for some fixed $k\in \{1,...,N\}$, and let $t:D\to R$ be the partial translation function underlying $V$ as in Example \ref{bandex}.  Computing for any $x\in X$,
$$ 
\Big([\phi_i,V]v\Big)(x)=\left\{\begin{array}{ll} (\phi_i(t(x))-\phi_i(x))v(t(x)) &x\in D \\ 0 & \text{otherwise}\end{array}\right.
$$
Hence
$$
\|[\phi_i,V]v\|^p=\sum_{x\in D}|\phi_i(t(x))-\phi_i(x)|^p\|v(t(x))\|^p,
$$
and so
\begin{align*}
\sum_{i\in J}\|[\phi_i,V]v\|^p & =\sum_{x\in D}\|v(t(x))\|^p\Big(\sum_{i\in J}|\phi_i(t(x))-\phi_i(x)|^p\Big) \\
& \leq  \sum_{x\in D}\|v(t(x))\|^p \epsilon^p \\
& \leq \|v\|^p\epsilon^p.
\end{align*}
As the choice of $k\in \{1,...,N\}$ was arbitrary, combing this with line \eqref{startcom} gives
$$
\sum_{i\in J}\|[\phi_i,A]v\|^p\leq \|A\|^pN^p\|v\|^p\epsilon^p.
$$
Finally, combing this with line \eqref{halfway} gives the desired norm bound.  

The fact that $\sum_{i\in J} \phi_i^{p/q}B_i[\phi_i,A]$ is a band operator (with propagation at most $\sup(\diam(\supp(\phi_i)))+\prop(A)$) follows directly on looking at matrix coefficients. 
\end{proof}

\subsection{Density of rich band operators}

Our next goal is to show that the rich band operators are dense in the rich band-dominated operators - the analogue of  \cite[Theorem 2.1.18]{Rabinovich:2004qy}.  That result is proved using Fourier analysis, which is not available in our context; instead, we proceed through the following corollary, which is inspired by the Hilbert space case \cite[Lemma 11.17]{Roe:2003rw}.

\begin{corollary}\label{schurish}
Assume $X$ has property A, and for each $n$, let $\{\phi_i^{(n)}\}$ be a metric $p$-partition of unity with $(n,1/n)$-variation.  Define 
$$
M_n:\AX\to \AX,~~~A\mapsto \sum_{i\in I} (\phi^{(n)}_i)^{p/q}A\phi^{(n)}_i.
$$
Then each $M_n$ is a well-defined linear operator of norm one.  Moreover, $M_n(A)\to A$ in norm, as $n\to\infty$, for each $A\in \AX$.
\end{corollary}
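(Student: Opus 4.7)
The plan is to deduce everything from Lemmas~\ref{construct} and \ref{almcom}, combined with one algebraic identity that rewrites $A - M_n(A)$ as a sum of the form treated in Lemma~\ref{almcom}.

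First I would verify that each $M_n$ is well-defined, linear, and has norm exactly one. Applying Lemma \ref{construct} with $J=I$, $B_i=A$, and $M=\|A\|$ shows that $M_n(A)$ converges strongly to a band operator of norm at most $\|A\|$, so $M_n:\AX\to\AX$ is a well-defined contraction; linearity is immediate. To see the norm is exactly one, note that $p/q+1=p$ (since $1/p+1/q=1$), and so property (3) of a metric $p$-partition of unity gives
$$
M_n(1)=\sum_{i\in I}(\phi_i^{(n)})^{p/q}\phi_i^{(n)}=\sum_{i\in I}(\phi_i^{(n)})^p=1,
$$
forcing $\|M_n\|=1$.

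The key step is the algebraic identity
$$
A-M_n(A)=\sum_{i\in I}(\phi_i^{(n)})^{p/q}\,[\phi_i^{(n)},A],
$$
which follows by writing $A=\bigl(\sum_i(\phi_i^{(n)})^p\bigr)A=\sum_i(\phi_i^{(n)})^{p/q}\phi_i^{(n)}A$ and subtracting the definition of $M_n(A)$ term by term. Once this is in hand, I would prove norm convergence first when $A$ is a band operator of propagation at most $r$. Fix such an $A$ and let $N=\sup_{x\in X}|B(x;r)|$. For all $n\geq r$, the $(n,1/n)$-variation assumption implies that $\{\phi_i^{(n)}\}$ also has $(r,1/n)$-variation (the variation condition is monotone in the scale). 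Applying Lemma~\ref{almcom} with $J=I$, $B_i=1$, $M=1$, and $\epsilon=1/n$ to the identity above yields
$$
\|A-M_n(A)\|\leq \frac{N\|A\|}{n}\xrightarrow[n\to\infty]{}0.
$$

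Finally, for a general $A\in\AX$ I would use density of band operators together with the uniform bound $\|M_n\|\leq 1$. Given $\epsilon>0$, pick a band operator $A'$ with $\|A-A'\|<\epsilon/3$; for $n$ large enough the previous step gives $\|M_n(A')-A'\|<\epsilon/3$, and then
$$
\|M_n(A)-A\|\leq \|M_n(A-A')\|+\|M_n(A')-A'\|+\|A'-A\|<\epsilon
$$
by a standard three-$\epsilon$ estimate. The only genuinely substantive step is the band-operator case, and its main obstacle, which is really already dispatched by Lemma~\ref{almcom}, is to recognize that $A-M_n(A)$ has the commutator form to which that lemma applies; once that identity is spotted, the rest is bookkeeping.
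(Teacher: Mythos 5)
Your proposal is correct and follows essentially the same route as the paper: well-definedness and contractivity from Lemma~\ref{construct}, the rewriting of $M_n(A)-A$ as $\sum_i(\phi_i^{(n)})^{p/q}[A,\phi_i^{(n)}]$ using $1+p/q=p$, the bound from Lemma~\ref{almcom} for band operators, and then density plus $\|M_n\|\leq 1$ for the general case. Your explicit remarks that $(n,1/n)$-variation implies $(r,1/n)$-variation for $n\geq r$ and that $M_n(1)=1$ (pinning the norm at exactly one) are small details the paper leaves implicit, but the argument is the same.
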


\begin{proof}
Lemma \ref{construct} (with $J=I$ and $B_i=A$ for all $i$) implies that $M_n$ is well-defined and norm one.  On the other hand, for each $n$ and any band-operator $A$,
$$
M_n(A)=\sum_{i\in I} (\phi^{(n)}_i)^{p/q}A\phi^{(n)}_i=\sum_{i\in I} (\phi^{(n)}_i)^{1+p/q}A+\sum_{i\in I} (\phi^{(n)}_i)^{p/q}[A,\phi^{(n)}_i].
$$
As $p$ and $q$ are conjugate indices, $1+p/q=p$ and so the first term on the right-hand-side above is $A$ (as $\{\phi_i\}$ is a metric $p$-partition of unity).  On the other hand, Lemma \ref{almcom} (with $J=I$ and $B_i$ the identity for all $i$) implies that the second term on the right-hand-side has norm at most $\|A\|N/n$ for some fixed $N$, and thus tends to zero as $n$ tends to infinity.  It follows that for any band-operator $M_n(A)$ converges in norm to $A$ as $n$ tends to infinity.  The result follows for band-dominated operators as $\|M_n\|\leq 1$ for all $n$.
\end{proof}

\begin{theorem}\label{denserich}
Assume $X$ has property A.  Then the rich band operators on $\ell^p_E(X)$ are dense in the rich band-dominated operators on $\ell^p_E(X)$.  
\end{theorem}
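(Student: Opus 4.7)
My plan is to leverage Corollary \ref{schurish} directly: it already supplies band operators $M_n(A) \to A$ in norm for any band-dominated $A$, so the only real task is to verify that the approximants $M_n(A)$ inherit richness from $A$. Thus the statement should follow from a short matrix-entry calculation combined with the basic fact that ultralimits of bounded scalar-valued functions always exist.

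First I would unpack the matrix entries of $M_n(A)$. Writing $\phi_i = \phi_i^{(n)}$ and using that each $A_{xy}\in\mathcal{L}(E)$ commutes with scalar multiplication, one computes
\[
(M_n(A))_{xy} \;=\; \Big(\sum_{i\in I}(\phi_i(x))^{p/q}\phi_i(y)\Big)\,A_{xy} \;=:\; h_n(x,y)\,A_{xy}.
\]
The bounded geometry/finite overlap conditions in Definition \ref{propa} immediately give $|h_n(x,y)|\le N$ uniformly, where $N$ is the overlap constant of $\{\phi_i^{(n)}\}$. So $M_n(A)$ is obtained from $A$ by entrywise multiplication with a bounded scalar kernel $h_n$.

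Next I would check richness. Fix a non-principal ultrafilter $\omega$ and partial translations $t,s$ compatible with $\omega$. On the set $D_t\cap D_s$ (which has $\omega$-measure one), the scalar function $x\mapsto h_n(t(x),s(x))$ is bounded, so $\lim_{x\to\omega} h_n(t(x),s(x))$ exists as a scalar by the general property of ultralimits. On the other hand, $\lim_{x\to\omega} A_{t(x)s(x)}$ exists in norm in $\mathcal{L}(E)$ because $A$ is rich at $\omega$. The product of a norm-convergent sequence with a convergent scalar sequence is norm-convergent, so
\[
\lim_{x\to\omega}(M_n(A))_{t(x)s(x)} \;=\; \Big(\lim_{x\to\omega} h_n(t(x),s(x))\Big)\,\Big(\lim_{x\to\omega} A_{t(x)s(x)}\Big)
\]
exists in norm. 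Hence $M_n(A)$ is rich at every $\omega\in\partial X$, i.e.\ rich, and the density statement follows since $M_n(A)\to A$ in norm and each $M_n(A)$ is a band operator by Corollary \ref{schurish}.

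Given the preparation done, there is essentially no obstacle in this final step; the conceptual difficulty has been absorbed into Lemma \ref{construct}, Lemma \ref{almcom}, and Corollary \ref{schurish}, where property A was used to build partitions of unity with small variation and to bound the commutator contributions. The only subtlety worth highlighting is the observation that \emph{scalar} entrywise multipliers cannot destroy richness, because boundedness is enough for a scalar ultralimit to exist — unlike the situation for $\mathcal{L}(E)$-valued entries, where one genuinely needs the richness hypothesis on $A$ to ensure norm convergence along the ultrafilter.
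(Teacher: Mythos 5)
Your proposal is correct and follows essentially the same route as the paper: both reduce to showing that the Schur-type approximants $M_n(A)$ from Corollary \ref{schurish} are rich, via the same matrix-entry identity $(M_n(A))_{xy}=h_n(x,y)A_{xy}$. The only (cosmetic) difference is that the paper establishes convergence of the scalar factor $c(x)=h_n(t_\alpha(x),t_\beta(x))$ by pigeonholing its values into small intervals of $\omega$-measure one, whereas you invoke the general existence of ultralimits of bounded scalar functions (Lemma \ref{limlem}) together with multiplicativity of ultralimits — both arguments are valid and interchangeable.
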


\begin{proof}
Say $A$ is a rich band-dominated operator and $M_n$ is as in Corollary \ref{schurish}; we will first show that $M_n(A)$ is rich.  Let $\{t_\alpha:D_\alpha\to R_\alpha\}$ be a compatible family of partial translations for $\omega$, and let $\alpha,\beta$ be points in $X(\omega)$.  It suffices by completeness of $\mathcal{L}(E)$ to show that for any $\epsilon>0$, there is a set $Y$ of $\omega$-measure one such that for all $x,y\in Y$
\begin{equation}\label{cauchy}
\|M_n(A)_{t_{\alpha}(x)t_\beta(x)}-M_n(A)_{t_\alpha(y)t_\beta(y)}\|<\epsilon.
\end{equation}
Concretely, the matrix coefficients of $M_n(A)_{t_{\alpha}(x)t_\beta(x)}$ are given by
$$
\sum_{i\in I}(\phi^{(n)}_i)^{p/q}(t_\alpha(x))A_{t_\alpha(x)t_\beta(x)}\phi^{(n)}_i(t_\beta(x));
$$
writing
$$
c(x)=\sum_{i\in I}(\phi^{(n)}_i)^{p/q}(t_\alpha(x))\phi^{(n)}_i(t_\beta(x)),
$$
this says that 
$$
M_n(A)_{t_{\alpha}(x)t_\beta(x)}=c(x)A_{t_\alpha(x)t_\beta(x)}.
$$
On the other hand, using that $\phi^{(n)}_i$ is a $p$-partition of unity and H\"{o}lder's inequality, each $c(x)$ is a number in $[0,1]$.  It follows that for some $i\in \{1,...,(\lfloor\frac{\epsilon}{4\|A\|}\rfloor)^{-1}\}$, the set
$$
Z_i:=\left\{x\in D_\alpha\cap D_\beta~|~c(x)\in \left[(i-1)\tfrac{\epsilon}{4\|A\|},i\tfrac{\epsilon}{4\|A\|}\right]\right\}
$$
has $\omega$-measure one.  Set 
$$
Z=\left\{x\in D_\alpha\cap D_\beta~|~\|A_{t_\alpha(x)t_\beta(x)}-\Phi_\omega(A)_{\alpha\beta}\|<\epsilon/4\right\},
$$
which has $\omega$-measure one by richness of $A$.  If we set $Y=Z\cap Z_i$, then $\omega(Y)=1$ and the inequality in line \eqref{cauchy} is satisfied for all $x,y\in Y$.  

The result thus follows from Corollary \ref{schurish}.   
\end{proof}

\subsection{Constructing parametrices}

Most of the rest of this section will be taken up with proving the following slightly technical proposition; as we show below, the part (2) implies (1) from Theorem \ref{main} follows using standard methods from this.

\begin{proposition}\label{main2}
Let $A$ be a rich band operator, and assume that all the limit operators $\Phi_\omega(A)$ are invertible, and 
$$
\sup_{\omega}\|\Phi_\omega(A)^{-1}\|<M
$$
is finite.  Then there exists an operator $B$ in $\AX$ which is an inverse for $A$ modulo $\KX$, and such that $\|B\|\leq 2M$.
\end{proposition}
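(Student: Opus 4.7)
My plan is to construct $B$ by gluing together local approximate inverses of $A$ using the partitions of unity provided by property~A (which is implicit in the hypotheses of Theorem~\ref{main}).

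\medskip

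\textbf{Step 1 (Local invertibility outside a finite set).} The key technical lemma to prove is the following: for every $\eta>0$, there exist $R>0$ and a finite set $K\subset X$ such that for every $x\in X\setminus K$ there is an operator $B_x\in\mathcal{L}(\ell^p_E(X))$ with $B_x=P_{B(x;R)}B_xP_{B(x;R)}$, $\|B_x\|\leq M+\eta$, and $\|AB_x\xi-\xi\|\leq \eta\|\xi\|$ for all $\xi$ supported in a prescribed inner ball $B(x;R')\subset B(x;R)$ (with $R'$ determined by $\operatorname{prop}(A)$ and the partition of unity to be chosen below). I would prove this by contradiction via ultrafilters: a failure at parameter $\eta$ yields a sequence $x_n\to\infty$ of bad points; extract a non-principal $\omega\in\partial X$ with $\omega(\{x_n\})=1$. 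By hypothesis $\Phi_\omega(A)$ is invertible with $\|\Phi_\omega(A)^{-1}\|<M$. Approximate $\Phi_\omega(A)^{-1}$ in norm (within $\eta/3$, say) by a band operator $C$, and for $F=B(\omega;R)\subset X(\omega)$ large enough form $P_FCP_F$; a direct calculation shows $P_FAP_F\cdot P_FCP_F$ agrees with $P_F$ on vectors supported in $B(\omega;R')$ up to arbitrarily small error. Transport $P_FCP_F$ back to $\ell^p_E(B(x_n;R))$ through the isometries of a local coordinate system (Proposition~\ref{concretelimop}), producing valid $B_{x_n}$ for $\omega$-a.e.\ $n$ --- contradicting badness. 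This ultrafilter compactness step is, in my view, the main obstacle: keeping the transported norm close to $M$ while simultaneously controlling the compression-to-inverse error and the spread of $\Phi_\omega(A)^{-1}$ requires several interleaved approximations.

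\medskip

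\textbf{Step 2 (Gluing via a fine partition of unity).} Fix $\eta>0$ small (to be chosen at the end), use property~A to get a metric $p$-partition of unity $\{\phi_i\}_{i\in I}$ of $(r,\eta)$-variation with $r\geq \operatorname{prop}(A)$ and supports of diameter $\leq r$; let $N$ be the overlap constant. Apply Step~1 with these $\eta$ and $R\gg r$, obtaining $K\subset X$ finite and operators $B_x$ for $x\notin K$. Choose a representative $x_i\in\operatorname{supp}(\phi_i)$ for each $i$, and set $B_{x_i}=0$ if $x_i\in K$. Define
$$
B_0 := \sum_{i\in I}\phi_i^{p/q}B_{x_i}\phi_i.
$$
By Lemma~\ref{construct}, $B_0$ is a band operator with $\|B_0\|\leq M+\eta$. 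Expanding,
$$
AB_0 = \sum_{i\in I}\phi_i^{p/q}(AB_{x_i})\phi_i + \sum_{i\in I}[A,\phi_i^{p/q}]B_{x_i}\phi_i.
$$
Lemma~\ref{almcom} bounds the commutator sum by $\eta N \|A\|(M+\eta)$ (applied to $\phi_i^{p/q}$ after a small additional estimate from the mean value theorem, or by a direct analogue). In the main sum, by Step~1 we have $AB_{x_i}\phi_i \approx \phi_i$ (with error controlled by $\eta\|\phi_i\|$) whenever $x_i\notin K$, provided $\phi_i$ is supported inside the inner ball; so
$$
\sum_i \phi_i^{p/q}(AB_{x_i})\phi_i \;\approx\; \sum_{i:x_i\notin K}\phi_i^{p/q}\phi_i \;=\; 1 - \sum_{i:x_i\in K}\phi_i^p,
$$
using $1+p/q=p$ and $\sum_i\phi_i^p\equiv 1$. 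Since $\{i:x_i\in K\}$ is finite, $K_0:=\sum_{i:x_i\in K}\phi_i^p$ is a finitely supported multiplication operator, hence in $\KX$. Therefore $AB_0 = 1 - K_0 + E$ with $K_0\in\KX$ and $\|E\|\leq C\eta$ for some constant $C$ depending only on $\|A\|$, $N$, and $M$.

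\medskip

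\textbf{Step 3 (Absorb the error and symmetrize).} Choose $\eta$ small enough that $C\eta<1/2$; then $1+E\in\AX$ is invertible by Neumann series with $\|(1+E)^{-1}\|\leq(1-C\eta)^{-1}$, and the inverse lies in $\AX$ since $\AX$ is a closed algebra. Set $B:=B_0(1+E)^{-1}\in\AX$. Then
$$
AB = 1 - K_0(1+E)^{-1},
$$
and $K_0(1+E)^{-1}\in\KX$ by Lemma~\ref{comideal}. Moreover $\|B\|\leq (M+\eta)/(1-C\eta)$, which is below $2M$ for $\eta$ sufficiently small. To upgrade this right parametrix to a two-sided parametrix modulo $\KX$, I would run the entirely analogous construction producing a left parametrix $B'\in\AX$ with $B'A-1\in\KX$ and $\|B'\|\leq 2M$; the standard algebraic identity $B'-B = B'(AB-1) - (B'A-1)B \in \KX$ then shows $B$ itself is two-sided modulo $\KX$, completing the proof.
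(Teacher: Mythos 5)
Your proposal is correct and follows essentially the same route as the paper: a lemma producing local inverses of norm close to $M$ at all but finitely many points (proved by contradiction via an ultrafilter, local coordinates from Proposition \ref{concretelimop}, and transport of $\Phi_\omega(A)^{-1}$), glued together with a property-A partition of unity via Lemmas \ref{construct} and \ref{almcom}, the commutator error absorbed by a Neumann series, and the one-sided parametrix upgraded to a two-sided one by symmetry. The only real differences are cosmetic: the paper's Lemma \ref{localinverses} arranges \emph{exact} local left inverses $B_iAP_{V_i}=P_{V_i}$ of norm at most $M$ (via the algebraic trick of inverting $1+\Phi_\omega(A)^{-1}U_n^{-1}P_nAP_nU_nQ-Q$), which sidesteps the uniform-radius bookkeeping you flag as the main obstacle and makes the commutator come out as $[\phi_i,A]$, so that Lemma \ref{almcom} applies verbatim rather than through the $[A,\phi_i^{p/q}]$ variant you would need to supply.
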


\begin{proof}[Proof of Theorem \ref{main}, part (2) implies (1)]
Let $A$ be as in the statement of Theorem \ref{main}, and let $(A_n)$ be a sequence of rich band operators that converge to $A$ in norm (Theorem \ref{denserich} implies that such a sequence exists).  Let $N$ be so large that for all $n\geq N$, $\|A_n-A\|<1/(M+1)$.  Using that each $\Phi_\omega$ is a contraction, for $n\geq N$,
$$
\|\Phi_\omega(A_n)-\Phi_\omega(A)\|\leq \|A_n-A\|<\frac{1}{M+1}<\frac{1}{\|\Phi_\omega(A)^{-1}\|}.
$$ 
Hence by a standard Neumann series argument in Banach algebra theory, for each $n\geq N$ and each $\omega$, $\Phi_\omega(A_n)$ is invertible and 
$$
\|\Phi_\omega(A_n)^{-1}\|\leq \frac{1}{1-\|\Phi_\omega(A_n)-\Phi_\omega(A)\|\|\Phi_\omega(A)^{-1}\|}\leq M+1.
$$
Now, as each $A_n$ is a band-operator, Proposition \ref{main2} implies that for each $n\geq N$ there exists $B_n$ which is an inverse for $A_n$ in the Banach algebra $\AX/\KX$, and which satisfies $\|B_n\|\leq 2M+2$ for all $n\geq N$.  Looking at norms in $\AX/\KX$,
$$
\|B_n-B_m\|= \|B_n(A_n-A_m)B_m\|\leq (2M+2)^2\|A_n-A_m\|,
$$
which tends to zero as $n,m$ tend to infinity.  Hence the sequence $(B_n)_{n\geq N}$ tends to some limit $B$ in $\AX/\KX$, which is clearly an inverse for $A$, completing the proof.
\end{proof}
  
For the remainder of this section, we focus on proving Proposition \ref{main2}.  

\begin{lemma}\label{localinverses}
Let $A$ be a rich band operator, satisfying the assumptions of Proposition \ref{main2}.
Let $\{V_i\}_{i\in I}$ be a uniformly bounded, finite multiplicity cover of $X$. Then there exists a finite subset $F$ of $I$ such that for all $i\in I\setminus F$ there is an operator $B_i$ on $\ell^p_E(X)$ of norm at most $M$, and such that
$$
B_iAP_{V_i}=P_{V_i}
$$
for all $i\in I\setminus F$.
\end{lemma}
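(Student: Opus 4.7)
The plan is to prove the lemma by contradiction, reducing everything to one application of the limit operator approximation in Proposition~\ref{concretelimop}.  Since $\sup_\omega\|\Phi_\omega(A)^{-1}\|<M$ by assumption, fix some $c$ with $1/M<c<1/\sup_\omega\|\Phi_\omega(A)^{-1}\|$.  My first goal is to show the key reduction: there is a finite $F\subseteq I$ such that for every $i\in I\setminus F$, the operator $AP_{V_i}:\ell^p_E(V_i)\to\ell^p_E(X)$ is bounded below by $c$, i.e.\ $\|AP_{V_i}v\|\geq c\|v\|$ for all $v\in\ell^p_E(V_i)$.  Granted this, the left-inverse of $AP_{V_i}$ on its image has norm at most $1/c<M$, and a Hahn--Banach style extension --- routine because, by bounded geometry of the cover, each $\ell^p_E(V_i)$ is an $\ell^p$-sum of a uniformly bounded number of copies of $E$ --- produces $B_i$ on $\ell^p_E(X)$ with $\|B_i\|\leq M$, the gap between $1/c$ and $M$ absorbing any norm blow-up in the extension.

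For the reduction, I argue by contradiction.  Suppose no such finite $F$ exists: then there is an infinite $J\subseteq I$ and unit vectors $v_i\in\ell^p_E(V_i)$ with $\|Av_i\|<c$ for each $i\in J$.  Pick $x_i\in V_i$ for each $i\in J$; since $\{V_j\}$ has finite multiplicity and uniformly bounded diameters, any finite subset of $X$ meets only finitely many $V_j$, so the set $S:=\{x_i:i\in J\}\subseteq X$ is infinite.  Choose a non-principal ultrafilter $\omega$ on $X$ with $\omega(S)=1$, set $R:=\sup_j\diam(V_j)+\prop(A)$, and let $\Sigma:=B_{X(\omega)}(\omega;R)$.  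By bounded geometry there are only finitely many isometry types of pointed triples $(B_X(x_i;R),x_i,V_i)$; so for a single such type the corresponding subset of $S$ has $\omega$-measure one, and restricting to that subset we may assume all these triples are (canonically) pulled back from one fixed triple $(\Sigma,\omega,W)$ with $W\subseteq\Sigma$.  Here Remark~\ref{rem:local-coords-balls} arranges the local coordinates so that the image of $\Sigma$ under $f_y$ is exactly the ball $B_X(y;R)$, which is what lets us speak of a canonical pullback of $V_i$ to a subset of $\Sigma$.

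Now I would apply Proposition~\ref{concretelimop} to $\Sigma$ with $\epsilon:=\tfrac{1}{2}(1/\sup_\omega\|\Phi_\omega(A)^{-1}\|-c)>0$, obtaining a set $Y\subseteq X$ of $\omega$-measure one and linear isometries $U_y:\ell^p_E(\Sigma)\to\ell^p_E(B_X(y;R))$ for $y\in Y$ with $\|U_y^{-1}P_{B(y;R)}AP_{B(y;R)}U_y-P_\Sigma\Phi_\omega(A)P_\Sigma\|<\epsilon$.  Choose $i_0\in J$ with $x_{i_0}\in Y$ (nonempty since both $S$ and $Y$ have $\omega$-measure one) and put $\tilde v:=U_{x_{i_0}}^{-1}v_{i_0}\in\ell^p_E(W)\subseteq\ell^p_E(\Sigma)$, a unit vector.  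Support considerations ($\supp(Av_{i_0})\subseteq B(x_{i_0};R)$ by our choice of $R$, and $\supp(\Phi_\omega(A)\tilde v)\subseteq\Sigma$ for the analogous reason) convert the operator-norm approximation into $\|\Phi_\omega(A)\tilde v\|<\|Av_{i_0}\|+\epsilon<c+\epsilon$; but invertibility of $\Phi_\omega(A)$ gives $\|\Phi_\omega(A)\tilde v\|\geq 1/\sup_\omega\|\Phi_\omega(A)^{-1}\|=c+2\epsilon$, the desired contradiction.

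I expect the main obstacle to be the combinatorial alignment via pigeonholing, which forces all the $V_i$ in the bad sequence to have the same metric shape and position around $x_i$ so that they pull back to one fixed $W\subseteq\Sigma$; without this, the witnesses $\tilde v_i$ would live in different subspaces of $\ell^p_E(X(\omega))$ and could not be combined into a single failing unit vector.  The remainder --- the support bookkeeping and the Banach-space extension of the first paragraph --- is comparatively routine given Proposition~\ref{concretelimop} and the strict bound $\sup_\omega\|\Phi_\omega(A)^{-1}\|<M$.
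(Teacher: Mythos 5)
Your reduction --- showing by contradiction that for all but finitely many $i$ the restriction $AP_{V_i}$ is bounded below by some $c>1/M$ --- is essentially sound, and is the same ultrafilter-plus-pigeonhole mechanism the paper uses (one quibble: the pigeonholing on the ``shape'' of $V_i$ inside $B(x_i;R)$ should be done \emph{after} fixing local coordinates via Proposition \ref{concretelimop} and Remark \ref{rem:local-coords-balls}, since an abstract isometry of pointed triples does not by itself give a canonical identification with a subset of $B(\omega;R)$; this is how the paper orders it, and it is only a presentational fix). The genuine gap is in the step you dismiss as routine: passing from ``$AP_{V_i}$ is bounded below by $c$'' to ``there is $B_i$ on $\ell^p_E(X)$ with $B_iAP_{V_i}=P_{V_i}$ and $\|B_i\|\leq M$''. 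A lower bound only yields an inverse defined on the image $Z_i:=AP_{V_i}(\ell^p_E(V_i))$, a closed subspace of $\ell^p_E(X)$. To produce $B_i$ you must extend this $\ell^p_E(V_i)$-valued map from $Z_i$ to all of $\ell^p_E(X)$, equivalently find a bounded projection of $\ell^p_E(X)$ onto $Z_i$. Hahn--Banach does not extend vector-valued operators, and the obstruction lives in how $Z_i$ sits inside $\ell^p_E(X)$, not in the internal structure of $\ell^p_E(V_i)$, so your appeal to $\ell^p_E(V_i)$ being a bounded $\ell^p$-sum of copies of $E$ misses the point. For general $E$ the subspace $Z_i$ need not be complemented at all; even for $E=\C$ and $p\neq 2$ the projection constant of a $d$-dimensional subspace of $\ell^p$ grows with $d$, and the loss is \emph{multiplicative}, so the fixed ratio $Mc>1$ (your ``gap between $1/c$ and $M$'') cannot absorb it uniformly over $i$.

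The paper's proof avoids this entirely: instead of inverting $AP_{V_i}$ abstractly, it transports the globally defined operator $\Phi_\omega(A)^{-1}$ back to $\ell^p_E(X)$ through the local coordinate isometries $U_n$, corrects by a Neumann series for the $\epsilon$-error coming from Proposition \ref{concretelimop} (this is what makes the identity $B_iAP_{V_i}=P_{V_i}$ hold exactly while keeping $\|B_i\|$ strictly below $M$), and then cuts down by the coordinate projection $P_n$. You should replace your extension step with a construction of this kind: the bounded-below statement alone does not produce the $B_i$ with the required uniform norm bound, and that uniform bound is exactly what Proposition \ref{main2} needs when it feeds the $B_i$ into Lemma \ref{construct}.
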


\begin{proof}
Assume for contradiction that this is not true.  Then there exists a sequence in $I$, say $(i_n)$, that eventually leaves any finite subset of $I$ and is such that if $B$ is a bounded operator on $\ell^p_E(X)$ with $\|B\|\leq M$, then if $Q_n:=P_{V_{i_n}}$ we have
\begin{equation}\label{badass}
BAQ_n\neq Q_n \text{ for all n}.
\end{equation}

Let $s>0$ and $(x_n)$ be such that $V_{i_n}$ is a subset of $B(x_n;s)$ for all $n$.  The assumptions force $(x_n)$ to tend to infinity.  Let $Y=\{x_n~|~n\in \N\}$. As this is an infinite set, there exists a non-principal ultrafilter $\omega$ on $X$ such that $\omega(Y)=1$.  Proposition \ref{concretelimop} then implies that on replacing $(x_n)$ with a subsequence we may assume that we have a local coordinate system: bijections $f_n:B(\omega;s+\prop(A))\to B(x_n;s+\prop(A))$ for each $n$ and associated linear isometries
$$
U_n:\ell^p_E(B(\omega;s+\prop(A))\to \ell^p_E(B(x_n;s+\prop(A))).
$$
As there are only finitely many subsets of $B(\omega;s+\text{prop}(A))$, for some subset $V\subseteq B(\omega;s+\text{prop}(A))$ we have that 
$$
\{n\in \N~|~f_n(V)=V_{i_n}\}
$$
is infinite; thus passing to another subsequence, we may assume that $f_n(V)=V_{i_n}$ for all $n$.  

Write now $Q=P_{V}$ and $P=P_{B(\omega;s+\prop(A))}$, and note that $Q_n=U_nQU_n^{-1}$. Write $P_n=U_nPU_n^{-1}$. Then for any $\epsilon>0$ and all sufficiently large $n$,
$$
\|U_n^{-1}P_nAP_nU_n-P\Phi_\omega(A)P\|<\epsilon
$$
Furthermore, note that $PQ=Q$, and that since $\Phi_\omega(A)$ has propagation at most $\text{prop}(A)$, we also have $\Phi_\omega(A)Q=P\Phi_\omega(A)PQ$ by our choice of $P$.

Then, it follows that all sufficiently large $n$ that
\begin{multline*}
\|\Phi_\omega(A)^{-1}U_n^{-1}P_nAP_nU_nQ-Q\| = \\
= \|\Phi_\omega(A)^{-1}U_n^{-1}P_nAP_nU_nQ-\Phi_\omega(A)^{-1}P\Phi_\omega(A)PQ\| = \\
=\|\Phi_\omega(A)^{-1}(U_n^{-1}P_nAP_nU_n- P\Phi_\omega(A)P)Q\|
 \leq \|\Phi_\omega(A)^{-1}\|\cdot \epsilon.
\end{multline*}
Hence for $\epsilon<1/M$, the operator $1+\Phi_\omega(A)^{-1}U_n^{-1}P_nAP_nU_nQ-Q$ is invertible, and we may define 
$$
B:=(1+\Phi_\omega(A)^{-1}U_n^{-1}P_nAP_nU_nQ-Q)^{-1}\Phi_\omega(A)^{-1}.  
$$
A simple algebraic check shows that 
\begin{equation}\label{algcon}
BU_n^{-1}P_nAP_nU_nQ=Q.
\end{equation}
From a basic computation with Neumann series it follows that if $\|T\|<\delta$, then $\|(1+T)^{-1}-1\|\leq\frac{\delta}{1-\delta}$. Applying this in our situation yields 
\begin{align*}
\|B-\Phi_\omega(A)^{-1}\| & \leq \left\|(1+\Phi_\omega(A)^{-1}U_n^{-1}P_nAP_nU_nQ-Q)^{-1}-1\right\|\|\Phi_\omega(A)^{-1}\| \\
& \leq \frac{\|\Phi_\omega(A)^{-1}\|\cdot\epsilon}{1-\|\Phi_\omega(A)^{-1}\|\cdot\epsilon}\|\Phi_\omega(A)^{-1}\|.
\end{align*}
Hence
$$
\|B\|\leq \|\Phi_\omega(A)^{-1}\|\left(1+\frac{\|\Phi_\omega(A)^{-1}\|\cdot\epsilon}{1-\|\Phi_\omega(A)^{-1}\|\cdot\epsilon}\right),
$$
which is less than $M$ for $\epsilon$ sufficiently small.  Set now 
$$
B_n=U_nBU_n^{-1}P_n,
$$
so $\|B_n\|\leq M$ for all suitably large $n$. Using \eqref{algcon} above and the fact that $P_nQ_n=Q_n$, we get
$$
B_nAQ_n=U_nBU_n^{-1}P_nAP_nU_nQU_n^{-1}=U_nQU_n^{-1}=Q_n.
$$
This contradicts the assumption in line \eqref{badass} at the start of the proof, so we are done.  
\end{proof}

\begin{proof}[Proof of Proposition \ref{main2}]
Let $\epsilon=1/2MN\|A\|$, where $N$ is an upper bound on the number of diagonals of $A$ as in Lemma \ref{decomp}.  Let $\{\phi_i\}_{i\in I}$ be a metric $p$-partition of unity with $(\prop(A),\epsilon)$-variation.  Applying Lemma \ref{localinverses} to the cover $\{\supp(\phi_i)\}$ of $X$ gives a finite subset $F$ of $I$ and operators $B_i$, $i\in I\setminus F$ with the properties in that lemma.  

Note that by Lemma \ref{construct}, the sum $\sum_{i\in I\setminus F}\phi_i^{p/q}B_i\phi_i$ converges strongly to an operator on $\ell^p_E(X)$ of norm at most $M$.  Consider
\begin{align*}
\Big(\sum_{i\in I\setminus F}\phi_i^{p/q}B_i\phi_i\Big)A & =\sum_{i\in I\setminus F}\phi_i^{p/q} B_iAQ_i\phi_i+\underbrace{\sum_{i\in I\setminus F} \phi_iB_i[\phi_i,A]}_{=:T} \\
&=\sum_{i\in I\setminus F} \phi_i^{p/q+1}+T.
\end{align*}
Noting that as $p$ and $q$ are conjugate indices, $p/q+1=p$; as moreover $\sum_i \phi_i^p=1$, this is equal to 
$$
\sum_{i\in I\setminus F} \phi_i^p +T=(1+T)-\sum_{i\in F}\phi_i^p.
$$
Lemma \ref{almcom} implies that $\|T\|\leq 1/2$, whence $1+T$ is invertible and its inverse has norm at most $2$; moreover its inverse is given by a Neumann series and is thus band-dominated as $T$ is a band-operator.  Note moreover that $\sum_{i\in F}\phi_i^p$ is \pcom.  It follows that 
$$
A_L:=(1+T)^{-1}\Big(\sum_{i\in I\setminus F}\phi_i^{p/q}B_i\phi_i\Big)
$$
is a band-dominated operator that is a left inverse for $A$ modulo the \pcom~ operators of norm at most $2M$.  A precisely analogous argument shows that $A$ also has a right inverse, say $A_{R}$, modulo the \pcom~ operators, with $A_R$ also having norm at most $2M$.  It follows that $A$ is invertible in $\AX/\KX$, with it inverse in this algebra equal to the image of $A_L$ and to that of $A_R$.  In particular, $A_L$ and $A_R$ are equal modulo $\KX$.  Thus either of them has the properties required of the operator $B$ in the statement, and we are done.
\end{proof}

\section{Metric sparsification and uniform boundedness}\label{sec:unifbdd}

The goal of this section is to prove the final implication from Theorem \ref{main}: (3) implies (2). The basic strategy is a generalisation of the approach taken by Lindner and Seidel \cite{Lindner:2014aa}.

Before we formulate the main result of this section, we need a definition.
\begin{definition}
Let $E_1,E_2$ be Banach spaces and $T:E_1\to E_2$ be a bounded linear operator.  We define the \emph{lower norm} of $T$ to be
$$
\nu(T): = \inf\left\{\tfrac{\|Tv\|_{E_2}}{\|v\|_{E_1}} \mid v\in E_1\setminus\{0\} \right\}.
$$
If $E_1=E_2=\lp(X)$ and $s\geq 0$, we shall also denote the lower norm computed on vectors supported on a set of diameter at most $s$ by
$$
\nu_s(T): = \inf\left\{\tfrac{\|Tv\|}{\|v\|} \mid v\in \lp(X)\setminus\{0\}, \diam(\supp(v))\leq s \right\}.
$$
Furthermore, if $F\subseteq X$ and $A\in\mathcal{L}(\lp(X))$, we shall denote the restriction of $A$ to $\ell^p_E(F)$ by $A|_F : \lp(F) \to \lp(X)$. The lower norms $\nu(A|_F)$ and $\nu_s(A|_F)$ shall be understood as the lower norms of $A|_F$ considered as an operator from $\lp(F)$ to $\lp(X)$.
\end{definition}

\begin{remark}\label{rem:low-norm}
Note that if $A$ is an invertible operator, then $\nu(A)=1/\|A^{-1}\|$. Also, $|\nu(A)-\nu(B)|\leq\|A-B\|$ for two operators, $A$ and $B$.
\end{remark}

\begin{theorem}\label{thm:minlimit}
Let $p\in(1,\infty)$ and $E$ be a Banach space.
Assume that $X$ is a space with property A\footnote{In disguise as the metric sparsification property, defined below.}. Let $A\in\AXr$. Then there exists an operator $C\in\opspec(A)$ with $\nu(C)=\inf\{\nu(B)\mid B\in\opspec(A)\}$.
\end{theorem}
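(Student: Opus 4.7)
Set $\mu := \inf\{\nu(B) \mid B \in \opspec(A)\}$; since $\nu(\Phi_\omega(A)) \geq \mu$ trivially, the task is to produce $\omega \in \partial X$ with $\nu(\Phi_\omega(A)) \leq \mu$. The plan is a double-limit construction: first localize near-optimal test vectors inside each limit space using the metric sparsification property (the incarnation of property A to be introduced in this section), then transport them between $X$ and its limit spaces via Proposition~\ref{concretelimop}, and finally extract a single $\omega$ by compactness. Throughout, Theorem~\ref{denserich} reduces to the case that $A$ is a rich band operator.

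First, for each $n$, choose $\omega_n \in \partial X$ with $\nu(\Phi_{\omega_n}(A)) < \mu + 1/n$ and a finitely supported unit vector $v_n \in \ell^p_E(X(\omega_n))$ with $\|\Phi_{\omega_n}(A) v_n\| < \mu + 2/n$. Metric sparsification applied inside $X(\omega_n)$ (which inherits property A from $X$ with the same quantitative constants, because finite subsets of $X(\omega_n)$ are isometric to subsets of $X$ by Proposition~\ref{ultequiv}) with separation $r > 2\prop(A)$ and mass fraction $c_n \to 1$ decomposes the bulk of $v_n$ into well-separated pieces of diameter at most $M_n = M(c_n, r)$. Since $\Phi_{\omega_n}(A)$ has propagation at most $\prop(A)$, its images of pieces separated by more than $2\prop(A)$ have disjoint supports; the $\ell^p$-additivity of the resulting norms, combined with the triangle inequality applied to $v_n = v_n|_\Omega + v_n|_{X(\omega_n)\setminus\Omega}$, forces at least one piece $u_n$ to satisfy $\|\Phi_{\omega_n}(A) u_n\|/\|u_n\| \to \mu$. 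After recentering using Proposition~\ref{prop:Xomega-same}, assume $\supp u_n \subseteq B(\omega_n, M_n)$.

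Next, Proposition~\ref{concretelimop} applied at $\omega_n$ with $F = B(\omega_n, M_n + \prop(A))$ and error $1/n$ produces an $\omega_n$-measure one set of valid basepoints $y$ at which the local coordinate isometry $U_y$ carries $u_n$ to a unit vector $\tilde u_n \in \ell^p_E(X)$ supported in $B(y_n, M_n)$, with $\|A \tilde u_n\| \to \mu$. Because this set is infinite, we may choose $y_n \to \infty$ in $X$; define $\omega := \lim_{n \to \mathcal{U}} y_n \in \partial X$ for any free ultrafilter $\mathcal{U}$ on $\N$. The final step inverts Proposition~\ref{concretelimop} at $\omega$: for each $k$, apply it with $F_k = B(\omega, R_k)$ and error $1/k$ to obtain $Z_k \subseteq X$ of $\omega$-measure one; along a subsequence $(n_k)$ chosen so that both $y_{n_k} \in Z_k$ and $M_{n_k} \leq R_k - \prop(A)$, inverting the local coordinate at $y_{n_k}$ transports $\tilde u_{n_k}$ to a unit vector $\hat u_{n_k} \in \ell^p_E(X(\omega))$ supported in $B(\omega, M_{n_k})$ with $\|\Phi_\omega(A) \hat u_{n_k}\| \to \mu$, whence $\nu(\Phi_\omega(A)) \leq \mu$.

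The principal obstacle will be the diagonal bookkeeping in this last step: since $M_n$ is only known to grow as $c_n \to 1$, the radii $R_k$ must be allowed to grow in parallel, while simultaneously the threshold $N_k$ (the index beyond which $y_n \in Z_k$ holds, as guaranteed by $y_n \to \omega$ and $\omega(Z_k) = 1$) must remain below the index at which $M_n$ exceeds $R_k - \prop(A)$. The three scales---sparsification diameter $M_n$, local-coordinate radius $R_k$, and convergence threshold $N_k$---need to be interleaved via a \emph{catch-up} subsequence, choosing $R_k$ large enough relative to $M_{N_k}$ at each stage and invoking the Stone--\v{C}ech convergence $y_n \to \omega$ to find compatible indices. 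A secondary subtlety, most relevant when $E$ is infinite dimensional, is that metric sparsification should be applied to the scalar measure $x \mapsto \|v_n(x)\|_E^p$ on $X(\omega_n)$ rather than to the $E$-valued vector directly, so that only property A of the underlying metric space is used.
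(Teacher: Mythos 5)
Your first two steps are sound: they essentially reproduce Steps 1--3 of the paper's Proposition \ref{prop:lower-norm} together with Lemma \ref{lem:MSPuniformly} (limit spaces inherit the metric sparsification property with the same constants), including the correct observation that one sparsifies the scalar measure $x\mapsto\|v_n(x)\|^p$. The gap is in the final transport step, and it is not mere bookkeeping --- it is the central difficulty of the theorem. Because you let the mass fraction $c_n\to 1$, the sparsification diameters $M_n$ are unbounded, so for each fixed radius $R$ only finitely many of your witnesses $\tilde u_n$ have support fitting inside $B(y_n;R)$. Proposition \ref{concretelimop} applied at $\omega$ with $F=B(\omega;R_k)$ and error $1/k$ yields a set $Z_k$ with $\omega(Z_k)=1$, i.e.\ $\mathcal{U}(\{n: y_n\in Z_k\})=1$; but this set, while infinite, may consist entirely of indices $n$ with $M_n>R_k-\prop(A)$, whose witnesses do not fit inside the ball on which the local coordinates are valid, while the finitely many indices with $M_n\leq R_k-\prop(A)$ need not meet $\{n:y_n\in Z_k\}$ at all. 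Choosing $R_k$ adaptively does not help, since $Z_k$ is determined by $R_k$: you would need, for some large $n$, the diagonal condition that $y_n$ lies in the measure-one set associated to the radius $M_n+\prop(A)$, and no combination of the individual measure-one statements yields such a diagonal one (a countable intersection of $\omega$-measure-one sets can be empty, as $\omega$ is only finitely additive). The obvious variant --- fix $\delta$, get witnesses of bounded diameter $s(\delta)$ so that a single radius suffices --- fails for a different reason: the recentred basepoints then depend on $\delta$, so each $\delta$ produces its own limit ultrafilter $\omega_\delta$ with $\nu(\Phi_{\omega_\delta}(A))\leq\mu+2\delta$, and you are back to a minimizing sequence rather than a minimizer.

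The paper's proof resolves exactly this tension by a multi-scale nested construction (Claim \ref{ind step}): for each $n$ it produces a \emph{single} point $\alpha_n\in X(\omega_n)$ near which witnesses with error $\delta_{m-1}$ exist \emph{simultaneously} at every scale $3s_m$ for all $m\leq n$, built by downward induction on the scales using Corollary \ref{lem:LNL-opspec-uniform}. The compactness extraction (Lemma \ref{lem:limops-compact}) is then performed one fixed radius $3s_m$ at a time; since at each fixed scale the witnesses exist for all $n\geq m$ --- a cofinite, hence measure-one, family of indices --- the ultrafilter bookkeeping closes, and only at the very end does one let $m\to\infty$. To repair your argument you would need to replace your single witness $u_n$ by such a tower of witnesses at scales $s_1<s_2<\cdots<s_n$, all centred (after shifting via Lemma \ref{lem:shifts}) at one and the same point of $X(\omega_n)$.
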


The implication (3) $\implies$ (2) in Theorem \ref{main} is an easy corollary of this result, since for an invertible operator $B$, one has $0\not=\nu(B)=1/\|B^{-1}\|$. Hence if all operators in $\opspec(A)$ are invertible, then there is a uniform bound on the norms of their inverses, namely $1/\nu(C)$ from the above statement.

\medskip

For the rest of the section, we fix $p\in(1,\infty)$ and a Banach space $E$.

\subsection{Lower norm localisation}

As alluded to before, for the results in this section, we need a reformulation of property A, called the metric sparsification property. The metric sparsification property was introduced by Chen, Tessera, Wang and Yu in \cite{Chen:2008so} precisely for the purposes of `locally estimating the operator norm'. The gist of the property is that one can choose big sets (in a given measure) that split into well separated uniformly bounded sets.  It does not seem to be obvious that the metric sparsification property is equivalent to property A: this follows on combining results from \cite{Brodzki:2011fk} and \cite{Sako:2012fk}.

We will use the following formulation of the metric sparsification property.  Using \cite[Proposition 3.3]{Chen:2008so} it is equivalent to the official definition (\cite[Definition 3.1]{Chen:2008so}).

\begin{definition}\label{def:MSP}
Let $(X,d)$ be a metric space. Then $X$ has the \emph{metric sparsification property} (MSP) with constant $c\in(0,1]$, if there exists a non-decreasing function $f:\N\to\N$, such that for all $m\in\N$ and any finite positive Borel measure $\mu$ on $X$, there is a Borel subset $\Omega=\sqcup_{i\in I}\Omega_i$ of $X$, such that
\begin{compactitem}
	\item $d(\Omega_i,\Omega_j)\geq m$ whenever $i\not=j\in I$;
	\item $\diam(\Omega_i)\leq f(m)$ for every $i\in I$;
	\item $\mu(\Omega)\geq c\mu(X)$.
\end{compactitem}
\end{definition}

\begin{remark}\label{rem:anyc}
Chen, Tessera Wang and Yu show in \cite[Proposition 3.3]{Chen:2008so} that if a space has the metric sparsification property for some $c\in (0,1]$, then it has it for any $c'\in (0,1)$.  The function $f':\N\to \N$ associated to $c'$ as in the definition of MSP will not be the same as the original $f:\N\to\N$, but inspection of the proof of \cite[Proposition 3.3]{Chen:2008so} shows that $f'$ can be chosen to depend only on $c'$, $c$ and $f$.
\end{remark}

The following proposition is a generalisation of a technical tool of Lindner and Seidel \cite[Proposition 6]{Lindner:2014aa}: they directly prove it for $\Z^N$.  The proof we present here is a straightforward adaptation of the argument that the metric sparsification property implies the operator norm localisation property \cite[Proposition 4.1]{Chen:2008so}. We remark that although the corresponding proof in \cite{Chen:2008so} is formulated for $p=2$ and Hilbert spaces $E$, it works just as well for other exponents $p$ and Banach spaces $E$ with the obvious modifications.

In the spirit of operator norm localisation, we refer to the phenomenon in the proposition as `lower norm localisation', since, roughly speaking, it says that if we fix the propagation ($r$), norm ($M$) and an error ($\delta$), then we can witness the lower norm, up to $\delta$, of any operator (of propagation $r$ and norm $M$) by a vector supported on a set of fixed size.

\begin{proposition}\label{prop:lower-norm}
Let $X$ be a space having the metric sparsification property. For any $\delta>0$, $M\geq0$ and $r\geq 0$ there exists $s\geq 0$, such that
\begin{equation*}
	\nu(A|_F)\leq \nu_s(A|_F) \leq \nu(A|_F)+\delta
\end{equation*}
for any $A\in\mathcal{L}(\lp(X))$ with propagation at most $r$ and norm at most $M$, and any $F\subseteq X$. 

Moreover, the constant $s$ depends only on $M,r,\delta$ and the constant $c>0$ and function $f:\N\to\N$ associated to MSP.  We shall refer to $s$ as the $\nu$-localisation constant (associated to $\delta$, $M$, $r$, and $c$, $f$.).
\end{proposition}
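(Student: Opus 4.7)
The inequality $\nu(A|_F)\le\nu_s(A|_F)$ is immediate, since the vectors over which $\nu_s$ is computed form a subclass of those over which $\nu$ is computed; the restricted infimum can only be larger. The plan for the nontrivial upper bound is to adapt the Chen--Tessera--Wang--Yu derivation of operator norm localisation from MSP in \cite{Chen:2008so} to the lower-norm setting. Starting from a vector $v$ that almost achieves $\nu(A|_F)$, the idea is to chop $v$ into well-separated pieces via MSP, exploit the fact that the separation kills all interference when $A$ is applied, and then use an elementary $\ell^p$-averaging argument to extract a single piece of bounded diameter that still witnesses the lower norm.

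Concretely, first choose a finitely supported unit vector $v\in\lp(F)$ with $\|Av\|\le\nu(A|_F)+\epsilon$, and form the finite positive Borel measure $\mu(\{x\})=\|v(x)\|^p$ on $X$ (supported inside $F$, with $\mu(X)=1$). Next, fix an integer $m>2r$ and a constant $c\in(0,1)$ close to $1$, which is legitimate by Remark~\ref{rem:anyc}; apply MSP to $\mu$ with this $c$ and $m$ to obtain a decomposition $\Omega=\bigsqcup_i\Omega_i$ with $\diam(\Omega_i)\le s:=f_c(m)$, $d(\Omega_i,\Omega_j)>2r$ for $i\ne j$, and $\mu(\Omega)\ge c$. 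Setting $v_i=P_{\Omega_i}v$ and $w=P_{X\setminus\Omega}v$ yields $v=w+\sum_i v_i$ with $\|w\|^p\le 1-c$ and $\sum_i\|v_i\|^p\ge c$. Since $\prop(A)\le r$ and $d(\Omega_i,\Omega_j)>2r$, the vectors $\{Av_i\}_i$ are pairwise disjointly supported, so
$$
\sum_i\|Av_i\|^p=\Bigl\|\sum_i Av_i\Bigr\|^p=\|A(v-w)\|^p\le\bigl(\nu(A|_F)+\epsilon+M(1-c)^{1/p}\bigr)^p.
$$
An elementary averaging inequality (if $\sum_i a_i^p\le P$ and $\sum_i b_i^p\ge Q>0$, then some index satisfies $a_i/b_i\le(P/Q)^{1/p}$), applied with $a_i=\|Av_i\|$ and $b_i=\|v_i\|$, now produces an index $i$ with $\|v_i\|>0$ and
$$
\frac{\|Av_i\|}{\|v_i\|}\le\frac{\nu(A|_F)+\epsilon+M(1-c)^{1/p}}{c^{1/p}}.
$$
Since $\supp(v_i)\subseteq\Omega_i$ has diameter at most $s$, this bounds $\nu_s(A|_F)$ above by the right-hand side; letting $\epsilon\to 0$ and using $\nu(A|_F)\le\|A\|\le M$, the difference $\nu_s(A|_F)-\nu(A|_F)$ is at most $M\bigl(1+(1-c)^{1/p}-c^{1/p}\bigr)/c^{1/p}$, which tends to $0$ as $c\to 1^-$.

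The proof is conceptually straightforward once the MSP framework is in place; the main point requiring care is that the standard formulation of MSP produces only a \emph{fixed} constant $c$, whereas the averaging argument requires $c$ arbitrarily close to $1$. This is handled by invoking Remark~\ref{rem:anyc}, at the cost of passing to a different (possibly much larger) function $f_c$. The final localisation constant $s=f_c(m)$ then depends only on $c$ (chosen from $\delta$ and $M$), on the smallest integer $m$ exceeding $2r$, and on the MSP data $(c,f)$ for $X$, which is precisely the dependence asserted in the statement.
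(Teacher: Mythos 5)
Your proof is correct and follows essentially the same route as the paper: choose a near-minimizer, apply MSP to the measure $\mu(\{x\})=\|v(x)\|^p$ with separation exceeding $2r$, discard the part of $v$ outside $\Omega$ at a cost of $M(1-c)^{1/p}$, and use the disjointness of the supports of the $Av_i$ together with an $\ell^p$-pigeonhole argument to extract a single piece of diameter at most $f_c(2r+1)$ witnessing the lower norm up to a factor that tends to $1$ as $c\to 1$. Your ``elementary averaging inequality'' is exactly the paper's Step~1 in disguise, and the dependence of $s$ on $(\delta,M,r,c,f)$ is tracked the same way.
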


\begin{proof}
The first inequality is trivial. We focus on the second one, in the case when $F=X$ (for the sake of clarity). Fix $r,M\geq 0$ and an operator $A\in\mathcal{L}(\lp(X))$ with propagation at most $r$ and norm at most $M$.  We may assume $r$ is a natural number.

\emph{Step 1}:
Suppose that $v\in\lp(X)\setminus\{0\}$ is such that its support splits into well separated subsets: precisely, $v= \sum_{i\in I}v_i$, $v_i\not=0$ for all $i\in I$ and $d(\supp(v_i),\supp(v_j)) > 2r$ if $i\not=j$. Then
\begin{equation*}
	\frac{\|Av\|}{\|v\|}\geq \inf_{i\in I}\frac{\|Av_i\|}{\|v_i\|}.
\end{equation*}
Indeed, since $A$ can spread the supports of vectors only by at most $r$, the vectors $Av_i$ are still supported on mutually disjoint sets, hence they are mutually orthogonal\footnote{If $p\not=2$, we simply mean that the analogue of the Pythagoras equality holds, i.e.~that $\|Av\|^p = \sum_{i\in I}\|Av_i\|^p$.}. Now suppose the inequality in the above display is false. Then
$$
\|Av\|^p =
\sum_{i\in I}\|Av_i\|^p >
\sum_{i\in I}\frac{\|Av\|^p\|v_i\|^p}{\|v\|^p} =
\frac{\|Av\|^p}{\|v\|^p}\sum_{i\in I}\|v_i\|^p =
\|Av\|^p,
$$
which is a contradiction.

\emph{Step 2}:
Given a vector $w\in\lp(X)\setminus\{0\}$, we show that up to a uniformly estimated modification, we can split its support into well separated, uniformly bounded sets. Indeed, let $\mu$ be the measure on $X$ defined by declaring that the masses of points are $\mu(\{x\})=\|w(x)\|^p$. Using the metric sparsification property as in Remark \ref{rem:anyc}, for any $c\in (0,1)$ there is a function $f:\N\to\N$ such that  there is a subset $\Omega=\sqcup_{i\in I}\Omega_i$ of $X$ such that $d(\Omega_i,\Omega_j)>2r+1$ for $i\neq j$, the diameter of each $\Omega_i$ is at most $f(2r+1)$, and $\mu(\Omega)\geq c\mu(X)$. Note that this means that $\|P_{\Omega}w\|^p=\mu(\Omega)\geq c\mu(X) = c\|w\|^p$.

\emph{Step 3}:
Norm estimates: With $w$ and $\Omega$ as above, we have
\begin{align*}
\|Aw - AP_{\Omega}w\|^p &\leq 
  \|A\|^p\|w - P_{\Omega}w\|^p =
  \|A\|^p\mu(X\setminus\Omega) =
  \|A\|^p(\mu(X)-\mu(\Omega)) \\
&\leq \|A\|^p (1-c)\mu(X) \leq
M^p(1-c)\|w\|^p.
\end{align*}
Consequently, we get
\begin{equation*}
\|AP_{\Omega}w\| \leq \|Aw\| + M(1-c)^{1/p}\|w\|.
\end{equation*}
Since the vector $P_\Omega w$ splits as $P_\Omega w = \sum_{i\in I}P_{\Omega_i}w$ (possibly discarding summands that are $0$), where the summands have $2r+1>2r$ separated supports, we can combine all of the above to obtain
\begin{align*}
	\inf_{i\in I}\frac{\|A(P_{\Omega_i}w)\|}{\|P_{\Omega_i}w\|} \leq
	\frac{\|AP_{\Omega}w\|}{\|P_{\Omega}w\|} \leq
	\frac{\|AP_{\Omega}w\|}{c^{1/p}\|w\|} \leq
	\frac1{c^{1/p}}\frac{\|Aw\|}{\|w\|} + M\left(\frac{1-c}c\right)^{1/p}.
\end{align*}
Also recall that $\diam(\supp(P_{\Omega_i}w))\leq f(2r+1)$. Hence we see that by choosing the vectors $w\not=0$ such that $\frac{\|Aw\|}{\|w\|}$ are arbitrarily close to $\nu(A)$, we can produce a another vector $v\not=0$ (one of the vectors $P_{\Omega_i}w$) whose support has diameter at most $f(2r+1)$ and the fraction $\frac{\|Av\|}{\|v\|}$ is thus arbitrarily close to $\frac{\nu(A)}{c^{1/p}} + M(\frac{1-c}c)^{1/p}$. Thus
\begin{equation*}
 \nu_{f(2r+1)}(A)\leq\frac{\nu(A)}{c^{1/p}} + M(\tfrac{1-c}c)^{1/p}.
\end{equation*} 

\emph{Step 4}:
Replace the error by $\delta$ using that $c$ can be chosen arbitrarily close to $1$ as in Remark \ref{rem:anyc}.

Since $0\leq \nu(A)\leq \|A\|\leq M$, for any $\delta>0$ we can find $c'\in(0,1)$, such that $\frac{\nu(A)}{c^{1/p}} + M(\tfrac{1-c}c)^{1/p}\leq \nu(A)+\delta$, since $c^{1/p}\to 1$ and $M(\frac{1-c}c)^{1/p}\to 0$ as $c\to 1$.  We may thus set $s=f'(2r+1)$, where $f':\N\to \N$ is the functions as in the definition of MSP for $c'$.  Noting that $c'$ depends on $M$ and $\delta$ and using Remark \ref{rem:anyc} we see that $s$ depends only on $r,M,\delta$ and the original parameters $c,f$ associated to MSP.

\emph{Step 5}: Incorporate the restrictions to $F\subseteq X$. The presented proof works exactly the same way, with the same constants, we only need to restrict the supports of the vectors $w$ to the given $F$.
\end{proof}

\begin{remark}
If $X$ has asymptotic dimension at most $d$ (see e.g. \cite{Roe:2003rw} or \cite{Nowak:2012aa}), then it is easily seen to have the metric sparsification property with $c=\frac1{d+1}$.   Quite often one also knows the function $f$ associated with this $c$, the above proof (together with \cite[Proposition 3.3]{Chen:2008so}) makes it possible to be very explicit about the support bound $s$ in these cases. This is in particular true for $\Z^N$ (with $d=N$).  We leave the computation to the reader as an exercise.
\end{remark}

\begin{remark}
Proposition \ref{prop:lower-norm} fails for any space $X$ that does not have the metric sparsification property. By \cite{Sako:2012fk} (see also \cite{Brodzki:2011fk}), this is equivalent to not having the operator norm localisation property, and so \cite[Lemma 4.2]{Roe:2013rt} provides us with $r>0$, $\kappa<1$, a sequence of disjoint finite subsets $X_n$ of $X$, a sequence of positive, norm one operators $A_n\in\mathcal{L}(\ell^2 X_n)$ with propagation at most $r$ and an increasing sequence of positive reals $s_n$ tending to infinity, such that for any $v\in\ell^2X_n$ of norm one, with support of diameter at most $s_n$, one has $\|A_nv\|\leq \kappa$. Furthermore, it is argued in \cite[Proof of Theorem 1.3]{Roe:2013rt} that there are eigenvectors of $A_n$ with eigenvalue $1$.

Taking $N\geq 0$, and denoting $V_n=1-A_n\in\mathcal{L}(\ell^2X_n)$, we see that $\nu_{s_N}(V_n)\geq 1-\kappa$ for all $n\geq N$, so the block-diagonal operator
$$
Q_N=1\oplus_{n\geq N}V_n\in\mathcal{L}\left(\ell^2(X\setminus \sqcup_{n\geq N}X_n)\oplus_{n\geq N}\ell^2X_n\right)
$$
satisfies $\nu_{s_n}(Q_N)\geq 1-\kappa>0$. However $Q_N$ has a non-trivial kernel (as each $V_n$ does), thus $\nu(Q_N)=0$. Observe also that $Q_N$ has norm one and propagation at most $R$. Thus if we choose $0<\delta<1-\kappa$, for any $s>0$ we can take $N$ sufficiently large, so that $s_N>s$ and so the operator $Q_N$ will satisfy $\nu_s(Q_N)\geq \nu_{s_N}(Q_N)\geq 1-\kappa>0+\delta = \nu(Q_N)+\delta$. This violates Proposition \ref{prop:lower-norm}. 

Finally, we note that we can construct suitable operators $V_n$ explicitly under the slightly stronger assumption that $X$ contains a disjoint union of finite subsets $X_n$, such that $\sqcup_nX_n$ is not uniformly locally amenable \cite{Brodzki:2011fk} (in particular, if $\sqcup_nX_n$ is an expander). Namely, we can take Laplacians $V_n=\Delta^{(n)}_r\in\mathcal{L}(\ell^2X_n)$ on (a suitable) scale $r$, defined by
$$
\Delta_r(\delta_x) = \sum_{y\in X_n, d(y,x)\leq r}(\delta_x-\delta_y),\qquad x\in X_n,
$$
see \cite[Section 3]{Roe:2013rt}.
\end{remark}

In preparation for consideration of limit operators, we turn our attention to limit spaces.

\begin{lemma}\label{lem:MSPuniformly}
Assume that a space $X$ has the metric sparsification property for the constant $c$ and associated function $f:\N\to\N$. Then any $X(\omega)$, $\omega\in\partial X$, has the metric sparsification property for the same $c$ and $f$. 

In other words, the family of metric spaces $\{X(\omega)\mid \omega\in \beta X\}$ has the \emph{uniform} metric sparsification property, as defined in \cite[Definition 3.4]{Chen:2008so}.	
\end{lemma}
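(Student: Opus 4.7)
The strategy is to reduce MSP on $X(\omega)$ to MSP on $X$ via the local coordinates of Proposition~\ref{ultequiv}, and then assemble the partial sparsifications using a diagonal ultrafilter limit. I fix $\omega \in \partial X$, a finite positive Borel measure $\mu$ on $X(\omega)$, and $m \in \N$. By Proposition~\ref{ulim}, $X(\omega)$ has bounded geometry and is therefore countable, so together with finiteness of $\mu$ I can choose an increasing sequence of finite sets $F_1 \subseteq F_2 \subseteq \cdots$ exhausting $X(\omega)$ with $\mu(F_n) \to \mu(X(\omega))$.

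For each $n$, Proposition~\ref{ultequiv} furnishes some $y_n \in X$ and an isometric bijection $\varphi_n : F_n \to G_n \subseteq X$, where $G_n = \varphi_n(F_n)$. I push $\mu|_{F_n}$ forward along $\varphi_n$ to a finite measure $\nu_n$ on $X$ supported on $G_n$, and apply MSP in $X$ with constants $(c,f)$ to $(\nu_n, m)$ to obtain $\Omega'_n = \sqcup_i \Omega'_{n,i} \subseteq X$ whose pieces are $m$-separated, have diameter at most $f(m)$, and satisfy $\nu_n(\Omega'_n) \geq c\mu(F_n)$. Since $\nu_n$ vanishes off $G_n$, I can replace $\Omega'_n$ by $\Omega'_n \cap G_n$ without losing any of these properties. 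Pulling back under $\varphi_n^{-1}$ then produces a subset $\Omega_n \subseteq F_n \subseteq X(\omega)$ with a decomposition into $m$-separated pieces of diameter at most $f(m)$ (since $\varphi_n$ is an exact isometry) and with $\mu(\Omega_n) \geq c\mu(F_n)$.

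To merge these partial sparsifications I fix a non-principal ultrafilter $\mathcal{U}$ on $\N$ and set $\Omega = \{\alpha \in X(\omega) : \{n : \alpha \in \Omega_n\} \in \mathcal{U}\}$. The mass bound follows by dominated convergence along $\mathcal{U}$ (legitimate because $\chi_{\Omega_n} \leq 1 \in L^1(\mu)$): $\mu(\Omega) = \lim_{n,\mathcal{U}} \mu(\Omega_n) \geq c\lim_{n,\mathcal{U}} \mu(F_n) = c\mu(X(\omega))$. To produce the decomposition of $\Omega$, note that for each $\alpha \in \Omega$ and $\mathcal{U}$-many $n$ the piece $P_n(\alpha)$ of $\Omega_n$ containing $\alpha$ lies inside the \emph{finite} ball $B_{X(\omega)}(\alpha; f(m))$; since this ball has only finitely many subsets, pigeonhole for $\mathcal{U}$ pins down a well-defined $S_\alpha \subseteq B_{X(\omega)}(\alpha; f(m))$ with $\{n : P_n(\alpha) = S_\alpha\} \in \mathcal{U}$. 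A routine check shows that $\alpha \sim \beta \Leftrightarrow S_\alpha = S_\beta$ is an equivalence relation on $\Omega$ whose classes are precisely the sets $S_\alpha$. Each class has diameter at most $f(m)$ (inherited from the $P_n(\alpha)$), and two distinct classes are at distance at least $m$ because for $\mathcal{U}$-many $n$ their representatives lie in different pieces of $\Omega_n$.

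The main obstacle is this final consistency step: one must ensure that the ultralimit $\Omega$ admits a decomposition satisfying the separation and diameter conditions \emph{simultaneously}, not merely that these hold pointwise along the sequence. The pigeonhole argument for $\mathcal{U}$ inside each bounded ball $B_{X(\omega)}(\alpha; f(m))$ is what makes this work, and it crucially uses that $X(\omega)$ has bounded geometry with the same constants as $X$ (Proposition~\ref{ulim}). It is also essential that the local coordinates of Proposition~\ref{ultequiv} are \emph{exact} isometries rather than quasi-isometries, since that is precisely what lets the constants $c$ and $f$ transfer from $X$ to $X(\omega)$ without any degradation.
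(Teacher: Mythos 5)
Your proof is correct, and its core mechanism is the same as the paper's: Proposition~\ref{ultequiv} gives exact isometries between finite subsets of $X(\omega)$ and finite subsets of $X$, so finite sparsification data transfers back and forth with no loss in the constants $c$ and $f$. Where you diverge is in how the reduction to finite configurations is handled. The paper first observes that in Definition~\ref{def:MSP} it suffices to treat finitely supported (probability) measures --- disposing of general measures with a one-line appeal to ``an approximation argument'' --- and then performs a \emph{single} isometric transfer of $\supp(\mu)$. You instead keep the general measure, exhaust $X(\omega)$ by finite sets $F_n$, sparsify each isometric copy separately, and reassemble the partial answers $\Omega_n$ by an ultralimit over $n$, using bounded geometry of $X(\omega)$ to pigeonhole the pieces $P_n(\alpha)$ inside the finite balls $B(\alpha;f(m))$ and so extract a genuine decomposition of the limit set $\Omega$. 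This costs you an extra layer of machinery, but it buys something the naive truncation does not: you obtain $\mu(\Omega)\geq c\,\mu(X(\omega))$ with the \emph{exact} constant $c$, whereas truncating to $F_n$ and applying MSP once only yields $c\,\mu(F_n)\geq c(1-\epsilon)\mu(X(\omega))$, i.e.\ every constant $c'<c$ (which is all the paper ever needs, by Remark~\ref{rem:anyc}, but is strictly weaker than the literal statement ``the same $c$ and $f$''). Both routes are valid; the paper's is shorter, yours is more self-contained and sharper on the constant.
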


\begin{proof}
First, recall that for any $\omega\in\partial X$, we can find an isometric copy of any finite set $F\subseteq X(\omega)$ inside $X$, by Proposition \ref{ultequiv}.

Turning our attention to the Definition \ref{def:MSP} of the metric sparsification property, observe that we can demand that the measures $\mu$ appearing in the definition are finitely supported probability measures: given any finite positive Borel measure $\mu$ on $X$, we can rescale it to achieve $\mu(X)=1$ without changing the outcome; and use an approximation argument to get finite support.

Additionally, we can also assume that the set $\Omega$ appearing in the definition of MSP is contained in the support of $\mu$.

We now argue that $X(\omega)$ has MSP with the same parameters as $X$. Given any finitely supported probability measure $\mu$ on $X(\omega)$, Proposition \ref{ultequiv} gives an isometric copy $F\subseteq X$ of its support, $\supp(\mu)$, inside $X$. Hence, we can pull back $\mu$ to $F$, apply the metric sparsification property for $X$ and then push the data back to $\supp(\mu)\subseteq X(\omega)$, providing the required set $\Omega$ and its decomposition in $X(\omega)$, satisfying exactly the same inequalities as in $X$.
\end{proof}

The next step is to prove that if we fix a rich, band-dominated operator $A$, then all its limit operators satisfy the lower norm localisation with the same parameters (cf.~\cite[Corollary 7]{Lindner:2014aa}).

\begin{corollary}\label{lem:LNL-opspec-uniform}
Assume that $X$ has the metric sparsification property and $A$ is a band-dominated operator on $\ell^p_E(X)$. Then for every $\delta>0$ there exists $s\in\N$ such that
$$
\nu(\Phi_\omega(A)|_F)\leq\nu_s(\Phi_\omega(A)|_F)\leq\nu(\Phi_\omega(A)|_F)+\delta
$$
for all $\omega\in\partial X$, such that $A$ is rich at $\omega$, and all $F\subseteq X(\omega)$.
\end{corollary}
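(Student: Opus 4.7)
The plan is to reduce the claim to an application of Proposition \ref{prop:lower-norm} on each limit space $X(\omega)$, using that MSP passes to limit spaces with the same parameters (Lemma \ref{lem:MSPuniformly}). The key is to produce a single band approximation $A'$ of $A$ such that, for every $\omega$ at which $A$ is rich, $\Phi_\omega(A')$ is a band operator on $X(\omega)$ of uniformly controlled propagation and norm.

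First I would take $A':=M_n(A)$ for the smoothing operator $M_n$ from Corollary \ref{schurish}, with $n$ large enough that $\|A-A'\|<\delta/3$. By Lemma \ref{construct} the operator $A'$ is a band operator: the cutoffs $\phi_i^{(n)}$ force every matrix entry of $(\phi_i^{(n)})^{p/q}A\phi_i^{(n)}$ to sit inside $\supp(\phi_i^{(n)})\times\supp(\phi_i^{(n)})$, so $\prop(A')\leq\sup_i\diam\supp\phi_i^{(n)}$ irrespective of whether $A$ is band or only band-dominated. Moreover, the computation in the proof of Theorem \ref{denserich} is local at each $\omega$, and shows that $A'=M_n(A)$ is rich at every $\omega$ at which $A$ is rich: the matrix entries of $A'$ along any compatible pair of partial translations $t_\alpha,t_\beta$ factor as $c(x)A_{t_\alpha(x)t_\beta(x)}$ with $c(x)\in[0,1]$ approximately constant on a set of $\omega$-measure one. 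Set $r:=\prop(A')$ and $M:=\|A'\|$.

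Next I would apply Proposition \ref{prop:lower-norm} on each limit space $X(\omega)$ at which $A$ is rich. By Theorem \ref{richhom}, $\Phi_\omega(A')$ is a band operator on $X(\omega)$ of propagation at most $r$ and norm at most $M$; by Lemma \ref{lem:MSPuniformly}, $X(\omega)$ has MSP with the same constant $c$ and function $f$ as $X$. Proposition \ref{prop:lower-norm} therefore yields an $s\in\N$, depending only on $r$, $M$, $\delta/3$, $c$, and $f$ (hence independent of $\omega$ and $F$), such that
$$\nu_s(\Phi_\omega(A')|_F)\leq\nu(\Phi_\omega(A')|_F)+\delta/3$$
for every $F\subseteq X(\omega)$.

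Finally I would transfer the bound from $\Phi_\omega(A')$ to $\Phi_\omega(A)$. Contractivity of $\Phi_\omega$ (Theorem \ref{richhom}(1)) gives $\|\Phi_\omega(A)-\Phi_\omega(A')\|<\delta/3$, and Remark \ref{rem:low-norm} bounds the differences of the corresponding lower norms and $s$-localised lower norms by $\delta/3$. Combining,
$$\nu_s(\Phi_\omega(A)|_F)\leq\nu_s(\Phi_\omega(A')|_F)+\delta/3\leq\nu(\Phi_\omega(A')|_F)+2\delta/3\leq\nu(\Phi_\omega(A)|_F)+\delta,$$
and the lower inequality $\nu(\Phi_\omega(A)|_F)\leq\nu_s(\Phi_\omega(A)|_F)$ is tautological. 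The only subtle point is the uniformity of the constant $s$ in $\omega$, which is exactly what Lemma \ref{lem:MSPuniformly} is designed to guarantee; everything else is bookkeeping and packaging earlier results.
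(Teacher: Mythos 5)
Your proof is correct and follows essentially the same route as the paper: approximate $A$ by a rich band operator with controlled propagation and norm, invoke Lemma \ref{lem:MSPuniformly} together with Proposition \ref{prop:lower-norm} to get an $s$ uniform over all limit spaces, and transfer the estimate back via Remark \ref{rem:low-norm}. The only cosmetic difference is that you unwind Theorem \ref{denserich} and take the approximant explicitly as $M_n(A)$ rather than citing that theorem as a black box, which is the same argument in substance.
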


\begin{proof}
For brevity, denote by $\$(A)\subseteq \partial X$ the set of all $\omega\in\partial X$ such that $A$ is rich at $\omega$.

We start by observing that any limit operator of $A$ can be approximated by band operators at least as well as $A$ itself.  Indeed, for any $\delta>0$ there exists a band operator $C$ with propagation (some) $r$ and $\|A-C\|<\frac\delta3$. By Theorem \ref{denserich}, we can assume that $C$ is rich at all $\omega\in\$(A)$. Then, for any such $\omega$, we have $\|\Phi_\omega(A)-\Phi_\omega(C)\|<\frac\delta3$ and $\Phi_\omega(C)$ has also propagation at most $r$; this is a consequence of Theorem \ref{richhom}. Also note that $\|\Phi_\omega(C)\|\leq\|\Phi_\omega(A)\|+\frac\delta3\leq\|A\|+\frac\delta3$.

Lemma \ref{lem:MSPuniformly} and Proposition \ref{prop:lower-norm} imply that there is some $s\in \N$ such that 
$$
\nu(B|_F)\leq\nu_s(B|_F)\leq\nu(B|_F)+\delta
$$ 
for all operators $B$ on $\lp(X(\omega))$ with propagation at most $r$ and $\|B\|\leq\|A\|+\frac\delta3$.  Moreover, $s$ does not depend on $\omega\in\partial X$. In particular, this applies when $B=\Phi_\omega(C)$.

Since norm-close operators have close lower norms (Remark \ref{rem:low-norm}), the conclusion of Proposition \ref{prop:lower-norm} yields
$$
\nu_s(\Phi_\omega(A)|_F)\leq\nu_s(\Phi_\omega(C)|_F)+\tfrac\delta3 \leq \nu(\Phi_\omega(C)|_F) +\tfrac{2\delta}3 \leq \nu(\Phi_\omega(A)|_F)+\delta,
$$
for any $\omega\in\$(A)$ and any $F\subseteq X(\omega)$. This proves the second of the required inequalities. The remaining one is obvious.
\end{proof}

\subsection{Lower norm and limit operators}

Recall now that the \emph{operator spectrum} of a rich band-dominated operator $A$ is
$$
\opspec(A):=\{\Phi_\omega(A)~|~\omega\in \partial X\}. 
$$
Before we embark on the proof of Theorem \ref{thm:minlimit}, we need to generalise some facts about the classical operator spectrum for operators on $\Z^N$ (or other groups). As noted before, when $X$ is a discrete group every $X(\omega)$ canonically identifies with $X$.  Thus one can consider all limit operators as living on the same space, namely $\lp(X)$. In this situation, it is known that the set $\opspec(A)$, for $A\in\AXr$, is $\mathcal{P}$-strongly compact. The next lemma is an easy corollary of this compactness fact in the group case; in our setting, where each limit operator lives on its own space $\lp(X(\omega))$, we give a direct proof.

\begin{lemma}\label{lem:limops-compact}
Let $A\in\AXr$.  Let $(\omega_n)$ be a sequence in $\partial X$.  Then there exists a point $\omega\in \beta X$, and a subsequence $(\omega_{n_k})$ such that 
$$
\lim_{k\to\infty}\nu(\Phi_{\omega_{n_k}}(A)|_{B(\omega_{n_k};r)})=\nu(\Phi_\omega(A)|_{B(\omega;r)})
$$
for any $r\geq 0$.
\end{lemma}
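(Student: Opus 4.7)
The plan is to use local coordinates to transport each ball $B(\omega_n; n) \subset X(\omega_n)$ into $X$, producing a sequence of base points $y_n \in X$, and then to construct $\omega \in \beta X$ as an ultrafilter limit of the $y_n$'s.

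\emph{Construction of $\omega$.}
For each $n$, I apply Proposition \ref{concretelimop} together with Remark \ref{rem:local-coords-balls} to $F = B(\omega_n; n)$ with error $1/n$: this yields a local coordinate system $\{f_y : B(\omega_n; n) \to B(y; n)\}_{y \in Y_n}$ (with $\omega_n(Y_n) = 1$, hence $Y_n$ infinite) whose induced isometries $U_y$ satisfy
$$
\|U_y^{-1} P_{B(y;n)} A P_{B(y;n)} U_y - P_{B(\omega_n;n)} \Phi_{\omega_n}(A) P_{B(\omega_n;n)}\| < 1/n.
$$
I then pick $y_n \in Y_n \setminus \{y_1, \dots, y_{n-1}\}$; the $y_n$'s are distinct and, by bounded geometry, must eventually leave every ball in $X$. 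Since each $\nu(\Phi_{\omega_n}(A)|_{B(\omega_n;r)})$ lies in $[0, \|A\|]$ and (by strong discreteness) only countably many radii $r$ produce distinct balls, a diagonal Bolzano--Weierstrass argument yields a subsequence, still denoted $(\omega_n)$, along which $y_n \to \infty$ in $X$ and $\nu(\Phi_{\omega_n}(A)|_{B(\omega_n; r)})$ converges for every $r \geq 0$. Fix a free ultrafilter $\mathcal{U}$ on $\N$ and define $\omega$ on $X$ by $\omega(S) = 1 \iff \{n : y_n \in S\} \in \mathcal{U}$; since $y_n \to \infty$, $\omega$ is a non-principal ultrafilter, so $\Phi_\omega(A)$ is defined.

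\emph{Verification of the lower-norm limit.}
Fix $r \geq 0$ and $\eta > 0$. Using Theorem \ref{denserich} (property A is in force throughout this section), pick a rich band operator $C$ of propagation $R$ with $\|A - C\| < \eta$; by Theorem \ref{richhom}, $\Phi_\omega(C)$ has propagation at most $R$ and $\|\Phi_\omega(A) - \Phi_\omega(C)\| \leq \eta$. Apply Proposition \ref{concretelimop} with Remark \ref{rem:local-coords-balls} to $B(\omega; r+R) \subset X(\omega)$ with error $\eta$, obtaining $\{f_z : B(\omega; r+R) \to B(z; r+R)\}_{z \in Z}$ with $\omega(Z) = 1$ whose induced isometries identify $P_{B(z;r+R)} C P_{B(z;r)}$ with $P_{B(\omega;r+R)} \Phi_\omega(C) P_{B(\omega;r)}$ up to $\eta$. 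Passing between $C$ and $A$ via $\|A - C\| < \eta$ and Remark \ref{rem:low-norm} gives
$$
|\nu(A|_{B(z;r)}) - \nu(\Phi_\omega(A)|_{B(\omega;r)})| = O(\eta)
$$
uniformly in $z \in Z$. By construction $\{n : y_n \in Z\} \in \mathcal{U}$, and for such $n$ (large enough that $n > r+R$) the initial intertwining analogously yields $|\nu(A|_{B(y_n;r)}) - \nu(\Phi_{\omega_n}(A)|_{B(\omega_n;r)})| = O(\eta) + O(1/n)$. Chaining these two estimates through $y_n = z$ and taking the $\mathcal{U}$-limit --- which, on our convergent subsequence, coincides with the ordinary limit --- then letting $\eta \to 0$ yields the required equality.

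\emph{Main obstacle.}
The main technical difficulty is propagation matching: Proposition \ref{concretelimop} identifies only finitely many matrix coefficients of $\Phi_\omega(A)$ with those of $A$, while the lower norm $\nu(\Phi_\omega(A)|_{B(\omega;r)})$ involves the full vector $\Phi_\omega(A) v \in \ell^p_E(X(\omega))$ for $v$ supported in $B(\omega;r)$, whose support may spill slightly outside $B(\omega;r)$. The fix is to introduce a rich band approximant $C$ of propagation $R$ and apply the proposition on the enlarged ball $B(\omega; r+R)$, so that the intertwining identifies the full operator restrictions rather than only finite submatrices; Remark \ref{rem:low-norm} then transfers the lower-norm estimate from $C$ back to $A$. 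Everything else is routine bookkeeping of the various $\eta$'s.
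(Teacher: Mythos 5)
Your proof is correct and follows essentially the same strategy as the paper's: approximate $A$ by a rich band operator via Theorem \ref{denserich}, use Proposition \ref{concretelimop} and Remark \ref{rem:local-coords-balls} to realise both $\Phi_{\omega_n}(A)|_{B(\omega_n;r)}$ and $\Phi_\omega(A)|_{B(\omega;r)}$ (up to small error, after enlarging the ball by the propagation) as compressions of $A$ to actual balls in $X$, chain the two identifications through a common point of $X$, and control everything with Remark \ref{rem:low-norm} and the contractivity of $\Phi_\omega$. The only genuine difference is the construction of $\omega$: you push forward a free ultrafilter on $\N$ along the sequence of base points $y_n\in X$ extracted from the local coordinate systems, whereas the paper takes $\omega=\lim_{n\to\mu}\omega_n$ directly in the compact space $\beta X$ for an ultrafilter $\mu$ on $\N$; both choices yield a non-principal $\omega$ for which the chaining argument closes, so this is a cosmetic rather than substantive divergence.
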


\begin{proof}
Fix $(\omega_n)$.  Fix a non-principal ultrafilter $\mu$ on $\N$, and set $\omega:=\lim_{n\to\mu} \omega_n$; as $\beta X$ is compact, $\omega$ is well-defined.  Using a diagonal argument, it will suffice to show that if we fix any subset $M_0$ of $\N$ for which $\mu(M_0)=1$, then for any fixed $r\geq 0$ and $\epsilon>0$ there exists an infinite subset $M$ of $M_0$ such that  
$$
|\nu(\Phi_{\omega_{n}}(A)|_{B(\omega_{n};r)})-\nu(\Phi_\omega(A)|_{B(\omega;r)})|<\epsilon
$$
for all $n\in M$. Note that using Theorem \ref{denserich}, Theorem \ref{richhom} part \eqref{cont}, and Remark \ref{rem:low-norm}, we may assume that $A$ is a band-operator; we will do this from now on.  

Let $\{t_\alpha:D_\alpha\to R_\alpha\}_{\alpha\in X(\omega)}$ be a compatible family for $\omega$.  Set $F:=B(\omega;r+\text{prop(A)})$.  To say that $\omega=\lim_{n\to \mu}\omega_n$ is equivalent to saying that for each $S\subseteq X$ with $\omega(S)=1$, we have that
$$
\mu(\{n\in M_0~|~\omega_n(S)=1\})=1.
$$
is infinite.  Hence in particular, if $M_\alpha:=\{n\in \N~|~\omega_n(D_\alpha)=1\}$, then $\mu(M_\alpha)=1$ and thus if $M_1:=\cap_{\alpha\in F} M_\alpha$, we have $\mu(M_1)=1$.  Note that $t_\alpha$ is compatible with $\omega_n$ for every $n\in M_1$.

Now, using Proposition \ref{concretelimop} there exists a system of local coordinates $\{f_y:F\to G(y)\}_{y\in Y}$ for $F$ such that the associated linear isometries $U:\lp(F)\to \lp(G(y))$ satisfy 
\begin{equation}\label{move in}
\|U^{-1}P_{G(y)}AP_{G(y)}U-P_F\Phi_\omega(A)P_F\|<\epsilon/2.
\end{equation}
As in Remark \ref{rem:local-coords-balls}, we may further assume that each $G(y)$ is equal to the ball $B(y;r+\text{prop}(A))$.
Set $M_2:=\{n\in M_0~|~\omega_n(Y)=1\}$, and note that $\mu(M_2)=1$.  Set $M=M_1\cap M_2$, which satisfies $\mu(M)=1$; we claim this has the desired properties.  

Indeed, fix $n\in M$.  As $n$ is in $M_1$, $\omega_n$ is compatible with $t_\alpha$ for each $\alpha\in F$, and therefore $t_\alpha(\omega_n)$ makes sense\footnote{It need not equal $\alpha$, however.}.  Write $F_n=\{t_\alpha(\omega_n)\in X(\omega_n)~|~\alpha\in F\}$; by an argument analogous to that of Remark \ref{rem:local-coords-balls}, we may assume that $F_n=B(\omega_n;r+\text{prop}(A))$.  It follows from Proposition \ref{concretelimop} that there exists a subset $Z\subseteq X$ (depending on $n$) and a system of local coordinates $\{f^{(n)}_y:F_n\to G(y)\}_{y\in Z}$ such that the corresponding linear isometries $U_n:\lp(F_n)\to \lp(G(y))$ satisfy
\begin{equation}\label{move in n}
\|U_n^{-1}P_{G(y)}AP_{G(y)}U_n-P_{F_n}\Phi_{\omega_n}(A)P_{F_n}\|<\epsilon/2;
\end{equation}
we may again use the argument of Remark \ref{rem:local-coords-balls} to assume that each $G(y)$ equals $B(y;r+\text{prop}(A))$, and thus there is no confusion between this and our earlier use of the notation $G(y)$.  

As $F=B(\omega;r+\text{prop}(A))$, and as $\text{prop}(A)$ is an upper bound for the propagations of $\Phi_\omega(A)$ by Theorem \ref{richhom} part \eqref{prop}, we see that 
$$
P_{F_n}\Phi_{\omega_{n}}(A)|_{B(\omega_{n};r)}P_{F_n}=\Phi_{\omega_{n}}(A)|_{B(\omega_{n};r)};
$$
and analogously using that $F_n=B(\omega_n;r+\text{prop}(A))$, we see that
$$
P_{F}\Phi_{\omega}(A)|_{B(\omega;r)}P_{F}=\Phi_{\omega}(A)|_{B(\omega;r)}.
$$
We thus get
\begin{align}\label{step1}
|\nu(\Phi_{\omega_{n}}(A)|_{B(\omega_{n};r)})&-\nu(\Phi_\omega(A)|_{B(\omega;r)})| \nonumber \\
&=|\nu(P_{F_n}\Phi_{\omega_{n}}(A)|_{B(\omega_{n};r)}P_{F_n})-\nu(P_{F}\Phi_{\omega}(A)|_{B(\omega;r)}P_{F})|. 
\end{align}
As $n$ is in $M_2$, we have $\omega_n(Y)=1$, and thus $\omega_n(Y\cap Z)=1$ and in particular $Y\cap Z\neq \varnothing$.  Moreover for any $y\in Y\cap Z$, we have
$$
\nu(U^{-1}_nP_{G(y)}A|_{B(y;r)}P_{G(y)}U_n)=\nu(U^{-1}P_{G(y)}A|_{B(y;r)}P_{G(y)}U)
$$
and thus line \eqref{step1} is bounded above by
\begin{align*}
 |\nu(P_{F_n}\Phi_{\omega_{n}}(A)|_{B(\omega_{n};r)}&P_{F_n})-\nu(U^{-1}_nP_{G(y)}A|_{B(y;r)}P_{G(y)}U_n)|\\& ~~~~+|\nu(P_{F}\Phi_{\omega_{n}}(A)|_{B(\omega;r)}P_{F})-\nu(U^{-1}P_{G(y)}A|_{B(y;r)}P_{G(y)}U)|
\end{align*}
Now, using Remark \ref{rem:low-norm}, we have that this is bounded above by
\begin{align*}
\|P_{F_n}\Phi_{\omega_{n}}(A)|_{B(\omega_{n};r)}&P_{F_n}-U^{-1}_nP_{G(y)}A|_{B(y;r)}P_{G(y)}U_n^{-1}\| \\ & ~~~~~+\|P_{F}\Phi_{\omega_{n}}(A)|_{B(\omega;r)}P_{F}-U^{-1}P_{G(y)}A|_{B(y;r)}P_{G(y)}U\| 
\end{align*}
and finally the fact that $U$ and $U_n$ are induced by isometries that take $\omega$ and $\omega_n$ respectively to $y$ bounds this above by
$$
\|P_{F_n}\Phi_{\omega_{n}}(A)P_{F_n}-U^{-1}_nP_{G(y)}AP_{G(y)}U_n\|+\|P_{F}\Phi_{\omega_{n}}(A)P_{F}-U^{-1}P_{G(y)}AP_{G(y)}U\|;
$$
lines \eqref{move in} and \eqref{move in n} imply this is less than $\epsilon$ as required.

\end{proof}

\begin{lemma}\label{lem:shifts}
  Let $\omega\in\partial X$ and $\alpha\in X(\omega)$. Then $\lp(X(\omega))=\lp(X(\alpha))$ and for any $A\in\AXr$, the corresponding limit operators are the same, i.e.~$\Phi_\omega(A) = \Phi_\alpha(A)$.
\end{lemma}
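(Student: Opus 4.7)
The first claim, $\ell^p_E(X(\omega)) = \ell^p_E(X(\alpha))$, is immediate from Proposition \ref{prop:Xomega-same}, which already shows that $X(\omega) = X(\alpha)$ as metric spaces (not merely isometric). So the content of the lemma reduces to verifying equality of matrix entries $\Phi_\omega(A)_{\beta\gamma} = \Phi_\alpha(A)_{\beta\gamma}$ for each $\beta,\gamma\in X(\omega)=X(\alpha)$, once we know $A$ is rich at both $\omega$ and $\alpha$ (which needs a brief sanity check, but follows from the same compatibility trick used below).

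My plan is to exploit the flexibility in choosing compatible families, together with Lemma \ref{limopwd} (well-definedness of the limit operator). Fix a compatible family $\{t_\delta:D_\delta\to R_\delta\}_{\delta\in X(\omega)}$ for $\omega$; in particular, $t_\alpha$ is a partial translation with $t_\alpha(\omega) = \alpha$. For each $\delta\in X(\omega) = X(\alpha)$, define
$$
s_\delta := t_\delta \circ t_\alpha^{-1}
$$
suitably restricted, so that $s_\delta$ is a partial translation whose domain has $\alpha$-measure one and satisfies $s_\delta(\alpha) = \delta$ (this is precisely the composition verification from Remark \ref{rem:comp-alt}). Thus $\{s_\delta\}_{\delta\in X(\alpha)}$ is a compatible family for $\alpha$.

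Now compute matrix entries using these specific compatible families. For any $\beta,\gamma\in X(\omega)$,
$$
\Phi_\alpha(A)_{\beta\gamma} = \lim_{y\to\alpha} A_{s_\beta(y)\, s_\gamma(y)} = \lim_{y\to\alpha} A_{t_\beta(t_\alpha^{-1}(y))\, t_\gamma(t_\alpha^{-1}(y))}.
$$
Applying the change of variables $y = t_\alpha(x)$ — which, as in the proof of Proposition \ref{prop:Xomega-same}, transforms an $\alpha$-limit into an $\omega$-limit via the defining property of $\lim_\omega$ — the right-hand side equals
$$
\lim_{x\to\omega} A_{t_\beta(x)\, t_\gamma(x)} = \Phi_\omega(A)_{\beta\gamma}.
$$
By Lemma \ref{limopwd} the values $\Phi_\omega(A)_{\beta\gamma}$ and $\Phi_\alpha(A)_{\beta\gamma}$ do not depend on the choice of compatible families, so this identity of matrix entries finishes the proof.

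The only delicate step is justifying the change of variables rigorously, but this is exactly the content of Remark \ref{rem:comp-alt}: for any $S\subseteq X$, $\omega(S)=1$ if and only if $\alpha(t_\alpha(S\cap D_\alpha))=1$, so a subset of $D_\alpha\cap D_\beta\cap D_\gamma$ has $\omega$-measure one if and only if its $t_\alpha$-image has $\alpha$-measure one. Applied to the convergence sets of the norm-Cauchy condition defining the limits, this gives the desired equality and no further obstacles arise.
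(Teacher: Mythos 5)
Your proof is correct and follows essentially the same route as the paper's: both use Proposition \ref{prop:Xomega-same} for the first claim, then observe via Remark \ref{rem:comp-alt} that $\{t_\delta\circ t_\alpha^{-1}\}$ is a compatible family for $\alpha$ and perform the same change of variables to identify the matrix entries. (Your ``sanity check'' that $A$ is rich at $\alpha$ is already guaranteed by the hypothesis $A\in\AXr$, which asserts richness at every point of $\partial X$.)
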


\begin{proof}
By Proposition \ref{prop:Xomega-same}, we have $X(\omega) = X(\alpha)$, and the only distinguishing feature is the choice of the basepoint. Hence, $\lp(X(\omega))=\lp(X(\alpha))$.

Let $\{t_\gamma\}_{\gamma\in X(\omega)}$ be a compatible family of partial translations for $\omega$.  It follows from Remark \ref{rem:comp-alt} that $\{t_\gamma\circ t_\alpha^{-1}\}_{\gamma\in X(\omega)}$ is a compatible family of partial translations for $\alpha$. Using Remark \ref{rem:comp-alt}, we compute for $\beta,\gamma \in X(\omega)=X(\alpha)$:
$$
(\Phi_\alpha(A))_{\beta\gamma} = \lim_{x\to\alpha} A_{t_\beta\circ t_\alpha^{-1}(x),t_\gamma\circ t_\alpha^{-1}(x)} = \lim_{x\to\omega}A_{t_\beta(x)t_\gamma(x)} = (\Phi_\omega(A))_{\beta\gamma},
$$
which finishes the proof.
\end{proof}

Before the proof of Theorem \ref{thm:minlimit}, it might be helpful for readers familiar with the work of Lindner and Seidel to discuss the notion of \emph{shift}. (See also Appendix \ref{sec:comparison}.) In the classical band-dominated operator theory the space $X$ is a group, and the group structure gives rise to isometries $V_g\in \mathcal{L}(\ell^p_E(X))$, defined by $(V_gv)(h) = v(g^{-1}h)$. Given an operator $A$ on $\ell^p_E(X)$, the \emph{shift} of $A$ by an element $g\in X$ is the operator $V_g^{-1}AV_g$. If we denote by $e\in X$ the neutral element of the group, changing from $A$ to the shift $V_g^{-1}AV_g$ can be understood also as changing the basepoint $e$ to $g$ and keeping the operator the same. 

Lindner and Seidel in \cite{Lindner:2014aa} apply shifting to limit operators. As explained in Appendix \ref{sec:comparison}, the limit operators in the group case are actually operators on $\ell^p_E(X)$, thus shifting them is possible. Let us explain the analogous operation in the general case, when $X$ is only a metric space. Then the limit operators live on spaces $\ell^p_E(X(\omega))$. Take an operator $B$ on $\ell^p_E(X(\omega))$. The space $X(\omega)$ has a natural basepoint, namely $\omega$. Given $\alpha\in X(\omega)$, the \emph{shift of $B$ to $\alpha$} will be exactly the same operator, $B$, considered now as an operator on $\ell^p_E(X(\alpha))$. 

\begin{proof}[Proof of Theorem \ref{thm:minlimit}]
We shall follow the strategy of the proof of \cite[Theorem 8]{Lindner:2014aa}. So take a sequence of limit operators in $\opspec(A)$, say $B_n=\Phi_{\omega_n}(A)$, such that $\nu(B_n)\to M:=\inf_{B\in\opspec(A)}\nu(B)$. Next, define $\delta_n:=\frac1{2^n}$ and let $(s_n)$ be the sequence of the $\nu$-localisation constants corresponding to $\delta_n$, obtained from applying Lemma \ref{lem:LNL-opspec-uniform}. Without loss of generality, we can assume that $s_{n+1}>2s_n$.

The one remaining technical step is the following, which we postpone for a moment.
\begin{claim}\label{ind step}
For each $n$ there is a point $\alpha_n\in X(\omega_n)$ such that for $m\in \{1,..., n\}$,
$$
\nu(\Phi_{\omega_n}(A)|_{B(\alpha_n;3s_{m})})\leq \nu(\Phi_{\omega_n}(A))+\delta_{m-1}.
$$
\end{claim}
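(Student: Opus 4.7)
The plan is to construct $\alpha_n$ by iteratively applying Corollary \ref{lem:LNL-opspec-uniform} at successively smaller scales, each step restricting the operator to the support of the previously obtained vector. This produces a nested chain of vectors whose supports concentrate around a single point, which will be $\alpha_n$.

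Write $A_n := \Phi_{\omega_n}(A)$ and $\nu_n := \nu(A_n)$. First I would apply Corollary \ref{lem:LNL-opspec-uniform} to $A_n$ with $F = X(\omega_n)$ at scale $s_n$, producing a vector $v_n$ with $\diam(\supp(v_n)) \leq s_n$ and $\|A_n v_n\|/\|v_n\| \leq \nu_n + \delta_n + \epsilon_n$ (for an arbitrarily small $\epsilon_n > 0$). Then, recursively, having constructed $v_{m+1}$, I would apply the same corollary to $A_n$ restricted to $F_m := \supp(v_{m+1})$ at scale $s_m$, to obtain $v_m$ supported in $\supp(v_{m+1})$, with $\diam(\supp(v_m)) \leq s_m$ and $\|A_n v_m\|/\|v_m\| \leq \nu(A_n|_{F_m}) + \delta_m + \epsilon_m$.

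A simple induction then yields the cumulative estimate
$$
\frac{\|A_n v_m\|}{\|v_m\|} \leq \nu_n + \sum_{k=m}^n (\delta_k + \epsilon_k).
$$
The key elementary computation is $\sum_{k=m}^n \tfrac{1}{2^k} = \tfrac{1}{2^{m-1}} - \tfrac{1}{2^n} = \delta_{m-1} - \delta_n$, which is strictly less than $\delta_{m-1}$. Choosing the $\epsilon_k > 0$ in advance so that $\sum_{k=1}^n \epsilon_k \leq \delta_n$, the displayed bound becomes $\|A_n v_m\|/\|v_m\| \leq \nu_n + \delta_{m-1}$ for every $m \in \{1,\dots,n\}$.

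To conclude I would take $\alpha_n$ to be any point of $\supp(v_1)$. By the nesting $\supp(v_1) \subseteq \supp(v_2) \subseteq \cdots \subseteq \supp(v_n)$, the point $\alpha_n$ lies in each $\supp(v_m)$; combined with the diameter bound $\diam(\supp(v_m)) \leq s_m$ this gives $\supp(v_m) \subseteq B(\alpha_n; s_m) \subseteq B(\alpha_n; 3s_m)$, so $v_m$ witnesses $\nu(\Phi_{\omega_n}(A)|_{B(\alpha_n; 3s_m)}) \leq \nu_n + \delta_{m-1}$, as desired. The only point requiring care is that Corollary \ref{lem:LNL-opspec-uniform} may be applied to the restrictions $A_n|_{F_m}$, but this is built into the uniformity of its statement over arbitrary subsets $F \subseteq X(\omega_n)$; the factor $3$ in $B(\alpha_n;3s_m)$ turns out to be superfluous for this argument, but it is harmless slack.
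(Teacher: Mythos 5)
Your proof is correct and follows the same strategy as the paper's: iterate Corollary \ref{lem:LNL-opspec-uniform} at the scales $s_n, s_{n-1},\dots,s_1$, each time restricting the operator to a set containing the support of the previously found almost-minimizing vector, and telescope the errors via $\sum_{k=m}^{n}\delta_k<\delta_{m-1}$. The one genuine (and welcome) difference is that you restrict to $\supp(v_{m+1})$ itself rather than to a ball $B(\alpha^{(k)};s_{n-k})$ around a freshly chosen center: this makes the supports nested, so any point of $\supp(v_1)$ serves as $\alpha_n$ and the factor $3$ --- which in the paper's argument absorbs the drift of the centers via the estimate $2s_{n-i}+s_{n-i-1}+\cdots+s_1\leq 3s_{n-i}$ --- becomes superfluous, exactly as you note; your explicit budgeting of the $\epsilon_k$ needed to pass from the infimum $\nu_s$ to an actual witnessing vector is also slightly more careful than the paper's.
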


Assuming we have this, Lemma \ref{lem:shifts} implies that we have equalities
$$
\nu(\Phi_{\omega_n}(A)|_{B(\alpha_n;3s_{m})})=\nu(\Phi_{\alpha_n}(A)|_{B(\alpha_n;3s_{m})}) \text{ and } \nu(\Phi_{\omega_n}(A))=\nu(\Phi_{\alpha_n}(A))
$$
whence Claim \ref{ind step} tells us that for all $m$
\begin{equation}\label{ind step alpha}
\nu(\Phi_{\alpha_n}(A)|_{B(\alpha_n;3s_{m})})\leq \nu(\Phi_{\alpha_n}(A))+\delta_{m-1}.
\end{equation}
On the other hand, Lemma \ref{lem:limops-compact} implies there exists $\alpha\in \partial X$ and a subsequence $(\alpha_{n_k})$ of $(\alpha_n)$ such that for all $m$,
$$
\lim_{k\to\infty}\nu (\Phi_{\alpha_{n_k}}(A)|_{B(\alpha_{n_k};3s_{m})})=\nu(\Phi_\alpha(A)|_{B(\alpha;3s_{m})}).
$$
In particular, if we replace $n$ by $n_k$ in line \eqref{ind step alpha} and take the limit as $k$ tends to infinity, then for each $m$ we get
\begin{equation}\label{almost}
\nu(\Phi_\alpha(A|_{B(\alpha;3s_{m})}))=\lim_{k\to\infty}\nu (\Phi_{\alpha_{n_k}}(A)|_{B(\alpha_{n_k};3s_{m})})\leq \liminf_{k\to\infty}\nu(\Phi_{\alpha_{n_k}}(A))+\delta_{m-1}.
\end{equation}
However, Lemma \ref{lem:shifts} again implies that 
$$
\nu(\Phi_{\alpha_{n_k}}(A))=\nu(\Phi_{\omega_{n_k}}(A))
$$ 
for all $k$, and the choice of $(\omega_n)$ implies that $\lim_{k\to\infty}\nu(\Phi_{\omega_{n_k}}(A))$ exists and equals $M$.   Thus line \eqref{almost} implies
$$
\nu(\Phi_\alpha(A|_{B(\alpha;3s_{m})}))\leq M+\delta_{m-1},
$$
which is still valid for all $m$.  Finally, taking the limit as $m$ tends to infinity gives
$$
\nu(\Phi_\alpha(A))\leq M.
$$ 
As $M$ is the infimum of the lower norms of all the limit operators of $A$, however, this forces $\nu(\Phi_\alpha(A))=M$ and we are done. \end{proof}

\begin{proof}[Proof of Claim \ref{ind step}]
Fix $n$.  For notational simplicity, write $B=\Phi_{\omega_n}(A)$ and $\omega=\omega_n$.  We will first construct points $\alpha^{(0)},...,\alpha^{(n)}\in X(\omega)$ and unit vectors $w_0,...,w_n\in \lp(X(\omega))$ with the following properties:
\begin{compactitem}
\item[(i)] for $k\in \{0,...,n\}$ and $i\in \{0,...,k-1\}$, the vector $w_i$ is supported in $B(\alpha^{(k)};2s_{n-i}+s_{n-i-1}+\dots+s_{n-k+1})$ (in particular in $B(\alpha^{(k)};2s_{n-k+1})$ for $i=k-1$);
\item[(ii)]  for $k\in \{0,...,n\}$, $w_k$ is supported in $B(\alpha^{(k)};s_{n-k})$; 
\item[(iii)] for each $i\in \{0,...,k\}$, $\|Bw_i\|<\nu(B)+\delta_n+\delta_{n-1}+\dots+\delta_{n-i}$.
\end{compactitem}

The construction is by induction on $k$.  For the base case $k=0$, Corollary \ref{lem:LNL-opspec-uniform} and the choice of $s_n$ gives us a unit vector $w_0\in \lp(X(\omega))$ supported in some set of diameter at most $s_n$ such that 
$$
\|Bw_0\|<\nu(B)+\delta_n.
$$
Choose $\alpha^{(0)}$ to be any point in $X(\omega)$ such that $w_0$ is supported in $B(\alpha^{(0)};s_n)$.  Clearly $\alpha^{(0)}$ and $w_0$ have the right properties.

For the inductive step, assume we have points $\alpha^{(0)},...,\alpha^{(k)}\in X(\omega)$ and unit vectors $w_0,...,w_k\in \lp(X(\omega))$ satisfying the above properties.  In particular, for $i=k$, we have
\begin{equation}\label{ind ass}
\nu(B|_{B(\alpha^{(k)};s_{n-k})})\leq \|Bw_k\|<\nu(B)+\delta_n+\cdots +\delta_{n-k}.
\end{equation}
Applying Lemma \ref{lem:LNL-opspec-uniform} for the error bound $\delta_{n-(k+1)}$ to $B|_{B(\alpha^{(k)};s_{n-k})}$ gives a unit vector $w_{k+1}\in \lp(B(\alpha^{(k)};s_{n-k}))$ with support in $B(\alpha^{(k+1)};s_{n-k-1})$ for some $\alpha^{(k+1)}\in B(\alpha^{(k)};s_{n-k})$ and with
$$
\|Bw_{k+1}\|<\nu(B|_{B(\alpha^{(k)};s_{n-k})})+\delta_{n-k-1}.
$$ 
Hence from line \eqref{ind ass} we get
$$
\|Bw_{k+1}\|<\nu(B)+\delta_{n}+\cdots +\delta_{n-k}+\delta_{n-k-1}.
$$
This completes the inductive step: indeed, the only remaining condition to check is that $w_0,...,w_k$ satisfy condition (i), and this follows from the inductive hypothesis and the fact that $\alpha^{(k+1)}$ is in $B(\alpha^{(k)},s_{n-k})$.

Now, to complete the proof of the claim, define $\alpha_n=\alpha^{(n)}$.  Then for each $i\in \{0,...,n-1\}$, the vector $w_i$ is supported in 
$$
B(\alpha_n;2s_{n-i}+s_{n-i-1}+\cdots+s_{1}).
$$
As $2s_i<s_{i+1}$, we have $2s_{n-i}+s_{n-i-1}+\cdots s_1\leq 3s_{n-i}$, and so $w_i$ is supported in $B(\alpha_n;3s_{n-i})$.  Moreover, $\delta_n+\delta_{n-1}+\cdots +\delta_{n-i}<\delta_{n-i-1}$, so for each $i\in \{0,...,n-1\}$ we have
$$
\nu(B|_{B(\alpha_n;3s_{n-i})})\leq \|Bw_i\|<\nu(B)+\delta_{n-i-1}.
$$ 
Setting $m=n-i$ for $i\in \{0,...,n-1\}$, we are done.

\end{proof}

\section{Necessity of property A: ghost operators}\label{sec:ghosts}

Throughout this section, $X$ is a space in the sense of Definition \ref{sdbg}, and $E$ is a fixed Banach space.  

For each non-principal ultrafilter $\omega$ on $X$ and $p\in (1,\infty)$, consider the homomorphism
$$
\Phi_\omega:\AXr\to \mathcal{L}(\ell^p_E(X(\omega)))
$$
from Theorem \ref{richhom} which sends each $A\in \AXr$ to its limit operator at $\omega$.  Our goal in this section is to characterise the intersection of the kernels of the various $\Phi_\omega$;  as well as being of some interest in its own right, this allows us to show that property A is necessary for the implication (2) implies (1) of Theorem \ref{main}, at least in the case $p=2$.

We will show that the intersection of the kernels of the homomorphisms $\Phi_\omega$ are the \emph{ghost operators} in the sense of the following definition.  The definition is originally due to Guoliang Yu: compare \cite[Section 11.5.2]{Roe:2003rw}.

\begin{definition}\label{ghostdef}
An operator $A$ in $\AX$ is a \emph{ghost} if for any $\epsilon>0$ there exists a finite subset $F$ of $X$ such that for all $(x,y)\in (X\times X)\setminus (F\times F)$,
$$
\|A_{xy}\|<\epsilon.
$$
\end{definition}

\begin{proposition}\label{ghostlem}
The intersection
$$
\bigcap_{\omega\in \partial X} \kernel(\Phi_\omega)
$$
consists exactly of ghost operators.
\end{proposition}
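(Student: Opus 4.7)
The plan is to establish both inclusions, noting first that every ghost operator is automatically rich at every non-principal $\omega$, so the statement makes sense without qualification.

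For the inclusion ``ghost $\Rightarrow$ in all kernels,'' fix a ghost $A$ and a non-principal $\omega\in\partial X$. For any $\alpha,\beta\in X(\omega)$ with compatible partial translations $t_\alpha:D_\alpha\to R_\alpha$ and $t_\beta:D_\beta\to R_\beta$, and any $\epsilon>0$, the ghost condition gives a finite $F\subseteq X$ with $\|A_{xy}\|<\epsilon$ off $F\times F$. Since $t_\alpha,t_\beta$ are injections, $t_\alpha^{-1}(F)$ and $t_\beta^{-1}(F)$ are finite, so on the $\omega$-large set $X\setminus(t_\alpha^{-1}(F)\cup t_\beta^{-1}(F))\cap D_\alpha\cap D_\beta$ we have $t_\alpha(x),t_\beta(x)\notin F$, whence $\|A_{t_\alpha(x)t_\beta(x)}\|<\epsilon$. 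Letting $\epsilon\to 0$ gives $\Phi_\omega(A)_{\alpha\beta}=0$, and since $\alpha,\beta$ were arbitrary, $\Phi_\omega(A)=0$.

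For the converse I prove the contrapositive: if $A\in\AXr$ is not a ghost, then some $\Phi_\omega(A)$ is nonzero. Non-ghost-ness yields $\epsilon>0$ and an infinite sequence of pairs $(x_n,y_n)$ with $\|A_{x_ny_n}\|\geq\epsilon$ and $x_n\to\infty$. Since $A$ is band-dominated, pick a band operator $B$ of propagation $r$ with $\|A-B\|<\epsilon/2$; using $\|A_{xy}\|=\|P_{\{x\}}AP_{\{y\}}\|\leq\|A\|$ applied to $A-B$, we force $d(x_n,y_n)\leq r$ for all $n$, for otherwise $B_{x_ny_n}=0$ and the estimate would contradict $\|A_{x_ny_n}\|\geq\epsilon$. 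By bounded geometry (each ball of radius $r$ is finite, uniformly) one passes to a subsequence along which both $(x_n)$ and $(y_n)$ are injective. Define a partial translation $t:\{x_n\}\to\{y_n\}$ by $t(x_n):=y_n$, of propagation at most $r$.

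To produce the witnessing ultrafilter, extend the Fr\'echet filter on $D:=\{x_n\}$ to a non-principal ultrafilter $\omega$ on $X$ with $\omega(D)=1$. Then the identity on $D$ is compatible with $\omega$ and sends $\omega$ to itself, while $t$ is compatible with $\omega$ and defines $\alpha:=t(\omega)\in X(\omega)$. Richness of $A$ at $\omega$ ensures that
$$
\Phi_\omega(A)_{\omega,\alpha}=\lim_{x\to\omega}A_{x,t(x)}
$$
exists in $\mathcal{L}(E)$-norm, and since $\|A_{x_n,t(x_n)}\|=\|A_{x_ny_n}\|\geq\epsilon$ on the $\omega$-large set $D$, this limit has norm at least $\epsilon$, so $\Phi_\omega(A)\neq 0$. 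This is the required contradiction.

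The main technical point is the second direction: combining band-domination with bounded geometry to replace the arbitrary non-ghost data by a genuine partial translation on infinitely many witnessing pairs. Everything else is a direct computation with the definition of $\Phi_\omega$ via limits of matrix entries along compatible translations.
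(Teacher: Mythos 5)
Your proof is correct and follows essentially the same route as the paper's: an entry-by-entry ultralimit computation for the ghost direction, and, for the converse, using band-domination plus bounded geometry to extract a partial translation through infinitely many large matrix entries and then an ultrafilter concentrated on its domain. The only cosmetic differences are that the paper phrases the first direction via $\Phi_\omega(A)=\Phi_\omega(A-P_FAP_F)$, and obtains the partial translation by decomposing the set of witnessing pairs into finitely many graphs and selecting the one of full $\omega$-measure, rather than passing to a subsequence on which both coordinates are injective as you do.
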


\begin{proof}
Say first $A$ is a ghost.   Let $\epsilon>0$, and let $F$ be a finite subset of $X$ such that all matrix entries of $A-P_FAP_F$ have norm at most $\epsilon$.  Then for any non-principal ultrafilter $\omega$ in $\beta X$ we have that $\Phi_\omega(A)=\Phi_\omega(A-P_FAP_F)$.  However, all matrix entries of the latter operator are ultralimits over operators of norm at most $\epsilon$, and thus have norm at most $\epsilon$.  As $\epsilon$ was arbitrary, this forces $\Phi_\omega(A)=0$.

Conversely, say $A$ is not a ghost.  Then there exists $\epsilon>0$ and an infinite subset $Y$ of $X\times X$ such that $\|A_{xy}\|\geq \epsilon$ for all $(x,y)\in Y$.  As $A$ is a limit of band-operators, we must have that there exists $R>0$ such that $d(x,y)\leq	R$ for all $(x,y)\in Y$.    Together with the facts that $X$ has bounded geometry, and $Y$ is infinite, this forces the subset $p_1(Y)$ to be infinite, where $p_1:X\times X\to X$ is the projection onto the first coordinate.  Let $\omega\in \beta X$ be any non-principal ultrafilter such that $\omega(p_1(Y))=1$.  As $X$ has bounded geometry, $p_1(Y)$ splits into finitely many disjoint sets $Z_1,...,Z_N$ such that for each $i\in \{1,...,N\}$ and each $z\in Z_i$, there is exactly one $x\in X$ such that $(z,x)$ is in $Y$.  There must exist exactly one $Z_i$ such that $\omega(Z_i)=1$; write $D$ for this $Z_i$.  Define now a partial translation $t$ with domain $D$ by stipulating that for each $z\in X$, $t(z)$ is the unique $x$ such that $(z,x)$ in $Y$.  Clearly $t$ is compatible with $\omega$; write $t(\omega)=\alpha$ for some $\alpha \in \beta X$.  Then the limit 
$$
\lim_{x\to\omega} A_{xt(x)}=A_{\omega\alpha}
$$
is a (norm) ultralimit of operators of norm at least $\epsilon$, and thus has norm at least $\epsilon$.  As it is a matrix coefficient of $\Phi_\omega(A)$, this forces $\Phi_\omega(A)\neq 0$ as required.
\end{proof}

Here is the promised proof that property A is necessary, at least for $p=2$ and $E=\C$.  The assumption $E=\C$ is not really significant, but simplifies the proof.  The assumption that $p=2$ is needed as we rely on results from \cite{Roe:2013rt} which uses operator algebraic techniques; we suspect the result should hold for any $p\in (1,\infty)$, however.

\begin{corollary}\label{aneccesary}
Assume $X$ is a space without property A, $p=2$ and $E=\C$.  Then there exists an operator $A$ in $\AXtwoC$ such that $\Phi_\omega(A)$ is invertible for all non-principal ultrafilters on $X$, and the norms $\|\Phi_\omega(A)^{-1}\|$ are uniformly bounded, but such that $A$ is not Fredholm.
\end{corollary}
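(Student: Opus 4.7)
The plan is to combine Proposition \ref{ghostlem} with the existence of non-compact ghost operators on spaces without property A. When $p=2$ and $E=\C$, the algebra $\AXtwoC$ is the uniform Roe algebra $C^*_u(X)$, the $\mathcal{P}$-compact operators coincide with the genuinely compact operators (Remark \ref{pcomrem}), and the collection of ghost operators forms a closed two-sided $*$-ideal. It is a theorem of Roe and Willett (the `ghostbusting theorem'; see \cite{Roe:2013rt}, already used in the remark after Proposition \ref{prop:lower-norm}) that $X$ has property A if and only if every ghost operator on $\ell^2(X)$ is compact. Under the hypothesis that $X$ lacks property A we may therefore fix a non-compact ghost $G \in C^*_u(X)$.

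The construction of $A$ then proceeds as follows. Replace $G$ by the positive ghost $g := G^*G$; this is still a ghost (ghosts are closed under $*$ and under multiplication by elements of $C^*_u(X)$) and still non-compact, since if $g$ were compact then $|G| = g^{1/2}$ would be compact and hence so would $G = U|G|$ by polar decomposition. As $g$ is self-adjoint and non-compact, its essential spectrum contains some $\lambda > 0$; rescaling $g$ by a positive scalar we may assume $1 \in \sigma_{\mathrm{ess}}(g)$. Define
\[
A := 1 - g \in \AXtwoC.
\]
For any $\omega \in \partial X$, Proposition \ref{ghostlem} gives $\Phi_\omega(g) = 0$, and the homomorphism property from Theorem \ref{richhom} yields $\Phi_\omega(A) = 1_{\ell^2(X(\omega))}$. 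Hence every $\Phi_\omega(A)$ is invertible with $\|\Phi_\omega(A)^{-1}\| = 1$, providing the required uniform bound trivially. On the other hand, by the spectral mapping theorem applied to the polynomial $f(t) = 1-t$, we have $\sigma_{\mathrm{ess}}(A) = 1 - \sigma_{\mathrm{ess}}(g) \ni 0$, so $A$ is not Fredholm; since $E=\C$ is finite dimensional, this is the same as saying $A$ is not $\mathcal{P}$-Fredholm.

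The only non-formal ingredient is the existence of a non-compact ghost, and this is precisely the step that forces the restriction to $p=2$ and $E=\C$: the equivalence between property A and `all ghosts are compact' is an essentially $C^*$-algebraic fact, proved using operator-algebraic methods, and is not currently available in the $\ell^p$ setting for $p \neq 2$. Once this black box is in hand, the rest of the argument is pure functional calculus combined with the homomorphism property of $\Phi_\omega$. As a cleaner variant, if one extracts from the failure of property A a non-compact ghost \emph{projection} $P$ (which is also possible via the operators appearing in the remark following Proposition \ref{prop:lower-norm}), one may take $A := 1 - P$ directly and observe that $\ker(A)$ contains the infinite-dimensional range of $P$.
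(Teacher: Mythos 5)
Your proposal is correct and follows essentially the same route as the paper: extract a non-compact positive ghost from the failure of property A via \cite{Roe:2013rt}, subtract it from a suitable nonzero scalar in its essential spectrum, and use Proposition \ref{ghostlem} together with the homomorphism property of $\Phi_\omega$ to see that all limit operators are that scalar while the operator itself fails to be Fredholm. The only cosmetic difference is that the paper quotes \cite{Roe:2013rt} as directly supplying a \emph{positive} non-compact ghost, whereas you manufacture positivity by passing to $G^*G$; both are fine.
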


\begin{proof}
The main result of \cite{Roe:2013rt} implies that there is a non-compact positive ghost operator $T$ on $\ell^2(X)=\ell^2_\C(X)$.  As $T$ is non-compact and positive there is some non-zero $\lambda$ in the essential spectrum of $T$.  Let $A=\lambda-T$.  Then Proposition \ref{ghostlem} implies that $\Phi_\omega(A)=\lambda$ for any non-principal ultrafilter $\omega$, so certainly all the operators $\Phi_\omega(A)$ are invertible with uniformly bounded inverses.  However, the choice of $\lambda$ guarantees that the essential spectrum of $A$ contains $0$, and so in particular $A$ is not Fredholm.
\end{proof}

\appendix

\section{Conventions on ultrafilters}\label{ultrafilterssec}

The material in this section is all fairly well-known, but we could not find an appropriate reference.  We have thus included it to keep the paper self-contained.

\begin{definition}\label{uf}
Let $A$ be a set and $\mathcal{P}(A)$ its power set.  An \emph{ultrafilter} on $A$ is a function 
$$
\omega:\mathcal{P}(A)\to \{0,1\}
$$
such that $\omega(A)=1$ and so that if a subset $B$ of $A$ can be written as a finite disjoint union $B=B_1\sqcup \cdots \sqcup B_n$ then 
$$
\omega(B)=\sum_{i=1}^n \omega(B_i).
$$
In words: $\omega$ is a finitely additive $\{0,1\}$-valued measure defined on the algebra of all subsets of $A$.

An ultrafilter is \emph{principal} if there exists some $a_0$ in $A$ such that  
$$
\omega(B)=\left\{\begin{array}{ll} 1 & a_0\in B \\ 0 & a_0 \not\in B\end{array}\right.,
$$
for all subsets $B$ of $A$, and is \emph{non-principal} otherwise.
\end{definition}

An argument based on Zorn's lemma shows that (many) non-principal ultrafilters exist whenever $A$ is infinite.

\begin{lemma}\label{limlem}
Let $A$ be a set, and $\omega$ be an ultrafilter on $A$.  Let $D$ be a subset of $A$ such that $\omega(D)=1$, and let $f:D\to X$ be a function from $D$ to a compact Hausdorff topological space $X$.

Then there exists a unique point $x\in X$ such that for any open neighbourhood $U$ of $x$, $\omega(f^{-1}(U))=1$.  
\end{lemma}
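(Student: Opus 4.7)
The plan is to prove uniqueness and existence separately, using the Hausdorff and compact properties of $X$ respectively.

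For uniqueness, I will argue by contradiction. Suppose $x \neq y$ both satisfy the stated property. By Hausdorffness, pick disjoint open neighbourhoods $U \ni x$ and $V \ni y$. Then $f^{-1}(U)$ and $f^{-1}(V)$ are disjoint subsets of $D$, each of $\omega$-measure $1$. Applying finite additivity to the disjoint union $f^{-1}(U) \sqcup f^{-1}(V) \subseteq A$ would give an $\omega$-value of at least $2$, which is absurd since $\omega$ takes values in $\{0,1\}$. To make this precise, I just note that $\omega$ is monotone (if $B \subseteq C$, write $C = B \sqcup (C \setminus B)$ and use finite additivity), so $\omega(f^{-1}(U) \sqcup f^{-1}(V)) \leq \omega(A) = 1$, contradicting $\omega(f^{-1}(U)) + \omega(f^{-1}(V)) = 2$.

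For existence, I will also argue by contradiction, exploiting compactness. Suppose no such $x$ exists. Then for each $x \in X$ there is an open neighbourhood $U_x$ with $\omega(f^{-1}(U_x)) = 0$. The collection $\{U_x\}_{x \in X}$ is an open cover of $X$, so by compactness there are finitely many points $x_1, \ldots, x_n$ with $X = U_{x_1} \cup \cdots \cup U_{x_n}$. Then $D \subseteq f^{-1}(X) = f^{-1}(U_{x_1}) \cup \cdots \cup f^{-1}(U_{x_n})$. The main routine step is to observe that finite additivity plus the $\{0,1\}$-valued assumption forces finite subadditivity: from $B_1 \cup B_2 = B_1 \sqcup (B_2 \setminus B_1)$ and monotonicity we get $\omega(B_1 \cup B_2) \leq \omega(B_1) + \omega(B_2)$, and the general case follows by induction. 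Hence $\omega(D) \leq \sum_{i=1}^{n} \omega(f^{-1}(U_{x_i})) = 0$, contradicting $\omega(D) = 1$.

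There is no real obstacle here; the only thing to be a little careful about is that $\omega$ is only declared finitely additive on disjoint unions, so one has to reduce the covering-by-$U_{x_i}$ estimate to a disjoint decomposition (or to monotonicity plus additivity, as above) rather than citing subadditivity directly.
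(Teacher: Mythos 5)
Your proof is correct. The uniqueness half is essentially identical to the paper's: separate two candidate limits by disjoint open sets and let finite additivity (plus monotonicity, which you rightly note must be extracted from additivity) produce the contradiction $2\leq 1$. For existence you take the dual route through compactness: the paper considers the family $\mathcal{F}$ of closed sets $F\subseteq X$ with $\omega(f^{-1}(F))=1$, checks it is nonempty and closed under finite intersections (hence has the finite intersection property, since a set of full measure is nonempty), and extracts a point of $\bigcap_{F\in\mathcal{F}}F$; you instead assume no limit point exists, cover $X$ by open sets whose preimages are $\omega$-null, pass to a finite subcover, and contradict $\omega(D)=1$ via finite subadditivity. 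The two arguments are logically dual formulations of compactness and carry the same content. Your version is marginally more self-contained in that the only measure-theoretic input is subadditivity, which you derive explicitly from the disjoint-union axiom; the paper's version has the small advantage of directly exhibiting the limit point as an element of an intersection of closed sets, which is the picture used implicitly elsewhere (e.g.\ in identifying $\lim_\omega f$ with a point of $\bigcap_F \overline{f(\text{sets of full measure})}$). Either proof is acceptable.
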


\begin{proof}
Uniqueness follows as $X$ is Hausdorff.  Indeed, if $x$ and $y$ were two points with the given property, then there would exist open disjoint neighbourhoods $U\owns x$ and $V\owns y$.  Finite additivity of $\omega$ then implies that
$$
1=\omega(f^{-1}(U\cup V))=\omega(f^{-1}(U))+\omega(f^{-1}(V))=2.
$$

For existence, let $\mathcal{F}$ be the collection of closed subsets $F$ of $X$ such that $\omega(f^{-1}(F))=1$; note that as $\omega(D)=1$, $\mathcal{F}$ contains $X$, so in particular is non-empty.  The collection $\mathcal{F}$ is also closed under finite intersections by finite additivity of $\omega$.  As $X$ is compact, there exists a point $x$ in the intersection $\cap_{F\in \mathcal{F}}F$.  If this $x$ did not have the given property, there would exist an open set $U\owns x$ such that $\omega(f^{-1}(U))=0$.  This forces $\omega(f^{-1}(X\setminus U))=1$, and therefore $x$ is in the closed set $X\setminus U$, which is a contradiction.
\end{proof}

\begin{definition}\label{limdef}
With notation from Lemma \ref{limlem}, the unique point $x$ is denoted $\lim_\omega f$, or $\lim_{a\to\omega}f(a)$ if we want to include the variable.  It is called the \emph{ultralimit of $f$ along $\omega$}, or the \emph{$\omega$-limit} of $f$.
\end{definition}

Note that the special case when $D=A=\N$ reduces to the well-known process of taking an ultralimit over a sequence.  Note also that if we restrict $f$ to a subset $E$ of $D$ such that $\omega(E)=1$, then $\lim_\omega f|_E=\lim_\omega f$.  We will use this fact many times in the body of the paper without further comment.

\begin{definition}\label{stonecech}
Let $A$ be a set. We denote by $\beta A$ the collection of all ultrafilters on $A$.  Note that $A$ identifies canonically with the subset of $\beta A$ consisting of principal ultrafilters.  The set $\beta A$ can be equipped with a topology as follows.

For each subset $B$ of $A$, define
$$
\overline{B}:=\{\omega\in \beta A~|~\omega(B)=1\}.
$$
The topology on $\beta A$ is that generated by the sets $\overline{B}$.  Note that each set $\overline{B}$ is also closed for this topology, and indeed identifies with the closure of the subset $B$ of $\beta A$ (i.e.\ the subset of principal ultrafilters coming from elements in $B$), justifying the notation. 

It is not difficult to check that with this topology $\beta A$ becomes a compact Hausdorff topological space that contains $A$ as a discrete, open, dense subset.  The topological space $\beta A$ is called the \emph{Stone-\v{C}ech compactification of $A$}, and the complement of $A$, denoted $\partial A:=\beta A\setminus A$, is called the \emph{Stone-\v{C}ech corona of $A$}. 
\end{definition}

\section{Comparison with previous definitions of limit operators}\label{sec:comparison}

The notion of limit operators we use in the main part of the paper (see Definition \ref{limopdef} above) looks quite different from those used by by Rabinovich, Roch and Silbermann in \cite[Section 1.2]{Rabinovich:2004qy} and Roe in \cite[Section 2]{Roe:2005lr}.  It is the purpose of this appendix to show that these various notions of limit operator give essentially the same operator spectrum; here `essentially the same' means that the operators appearing in the two spectra are the same up to conjugation by canonical isometric isomorphisms.  In particular, the results of this appendix make it clear that \cite[Theorem 2.2.1]{Rabinovich:2004qy} and \cite[Theorem 3.4]{Roe:2005lr} are equivalent to special cases of Theorem \ref{main}, part (2) if and only  (1), above.  

Throughout this appendix, we work in the following setting.  Let $\Gamma$ be a countable discrete group\footnote{Roe assumes $\Gamma$ is finitely generated, and Rabinovich, Roch and Silbermann that it is $\Z^N$ for some $N$, but these restrictions make no difference to the definitions or proofs.}, and equip $\Gamma$ with any left-invariant\footnote{This means that $d(gh_1,gh_2)=d(h_1,h_2)$ for $g,h_1,h_2$ in $\Gamma$} bounded geometry metric taking a discrete set of values (so in particular, $\Gamma$ equipped with this metric is a space, in the sense of Definition \ref{sdbg} above).  It is well-known that such a metric always exists and any two such metrics are coarsely equivalent: see for example \cite[Proposition 2.3.3]{Willett:2009rt}.  It follows that the algebra of band-dominated operators on $\Gamma$ does not depend on the choice of such a metric.

For each $g\in \Gamma$, let 
$$
\rho_g:\Gamma\to \Gamma,~~~x\mapsto xg
$$
denote the natural right translation map; using left-invariance of the metric, we see that $d(\rho_g(x),x)=d(g,e)$ for all $x$, so in particular each $\rho_g$ is a partial translation.  Each is moreover compatible with every $\omega\in \partial\Gamma$, as each has full domain, and thus for each $g\in \Gamma$ and $\omega\in \beta\Gamma$, the element $\rho_g(\omega)$ is a well-defined element of $\Gamma(\omega)$.  We have the following lemma.

\begin{lemma}\label{limspgp}
For each non-principal ultrafilter on $G$, the natural map
$$
b_\omega:\Gamma\to \Gamma(\omega),~~~g\mapsto \rho_g(\omega)
$$
is an isometric bijection.
\end{lemma}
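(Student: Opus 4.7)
The plan is to verify in turn that $b_\omega$ is well-defined, isometric (hence injective), and surjective. Well-definedness is essentially built in: each $\rho_g$ is a partial translation since $d(x,\rho_g(x)) = d(x,xg) = d(e,g)$ by left-invariance, and it has full domain, so it is compatible with every $\omega$; thus $\rho_g(\omega) \in \Gamma(\omega)$.

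For the isometry statement, I would take the family $\{\rho_g\}_{g \in \Gamma}$ itself as a compatible family for $\omega$ (indexed so that $\rho_g$ witnesses $b_\omega(g)$), and compute directly from Proposition \ref{ulim}:
$$
d_\omega(b_\omega(g), b_\omega(h)) = \lim_{x \to \omega} d(\rho_g(x),\rho_h(x)) = \lim_{x \to \omega} d(xg,xh) = d(g,h),
$$
using left-invariance of the metric in the last equality. Isometry immediately gives injectivity.

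For surjectivity, let $\alpha \in \Gamma(\omega)$ and choose a partial translation $t: D \to R$ compatible with $\omega$ such that $t(\omega) = \alpha$. The main trick is to observe that for each $x \in D$ there is a \emph{unique} $g_x \in \Gamma$ with $t(x) = x g_x$, namely $g_x := x^{-1}t(x)$. Left-invariance gives $d(e, g_x) = d(x, t(x)) \leq \sup_{y \in D} d(y, t(y)) =: K$, so $g_x$ lies in the finite set $B(e; K)$ (bounded geometry). Partition $D$ into the finitely many fibers $D_g := \{x \in D : g_x = g\}$ for $g \in B(e;K)$; by finite additivity of $\omega$, exactly one fiber $D_{g_0}$ has $\omega$-measure one. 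On that set $t$ agrees with $\rho_{g_0}$, so by Lemma \ref{esssame} (or directly by the definition of $\lim_\omega$ restricted to a full-measure subset), $\alpha = t(\omega) = \rho_{g_0}(\omega) = b_\omega(g_0)$.

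The only real content is in the surjectivity argument — specifically, recognizing that any partial translation, when restricted to a set of $\omega$-measure one, is a right translation by a single group element. Everything else is a short calculation from the definitions and left-invariance. I expect no serious obstacle.
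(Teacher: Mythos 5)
Your proposal is correct and follows essentially the same route as the paper: the isometry is the same one-line computation from left-invariance, and your surjectivity argument (partitioning $D$ into the fibers $D_g=\{x\in D\mid t(x)=xg\}$, using the bound on $d(x,t(x))$ plus bounded geometry to see only finitely many are non-empty, and picking the unique fiber of $\omega$-measure one) is exactly the paper's argument with the finiteness step spelled out slightly more explicitly. No gaps.
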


\begin{proof}
The computation
$$
d_\omega(\rho_x(\omega),\rho_y(\omega))=\lim_{g\to\omega}d(\rho_x(g),\rho_y(g))=\lim_{g\to\omega}d(gx,gy)=d(x,y)
$$
(where the last step is left-invariance) shows that $b_\omega$ is isometric.  To see surjectivity, let $\alpha$ be any element of $\Gamma(\omega)$, and $t:D\to R$ be any partial translation that is compatible with $\omega$ such that $t(\omega)=\alpha$.  For each $g\in \Gamma$, define
$$
D_g:=\{x\in D~|~t(x)=\rho_g(x)\}.
$$  
and note that as $t$ is a partial translation, only finitely many $D_g$ can be non-empty.  It follows that there is a unique $g\in \Gamma$ such that $\omega(D_g)=1$, and it follows that $\alpha=t(\omega)=\rho_g(\omega)$ for this $g$. 
\end{proof}

As usual, we will denote by $\ell^p_E(\Gamma)$ the Banach space of $p$-summable functions on $\Gamma$ with values in a given Banach space $E$.  For each $g\in \Gamma$, let $V_g:\ell^p_E(\Gamma)\to \ell^p_E(\Gamma)$ be the linear and isometric shift operator defined by 
$$
(V_g\xi)(h)=\xi(g^{-1}h).
$$
For each non-principal ultrafilter $\omega$ on $\Gamma$, let $b_\omega:\Gamma\to\Gamma(\omega)$ be the bijective isometry from Lemma \ref{limspgp}, and let $U_\omega:\ell^p_E(\Gamma) \to \ell^p_E(\Gamma(\omega))$ be the corresponding linear isometric isomorphism defined by
$$
(U_\omega\xi)(\alpha)=\xi(b_\omega^{-1}(\alpha)).
$$
This notation will be fixed for the remainder of the appendix.

Having established all this notation, we first look at the definition of limit operator used by Roe \cite[page 413]{Roe:2005lr}.

\begin{definition}\label{roelimop}
Let $\ell^2(\Gamma)$ denote the Hilbert space\footnote{One could also use $p\neq 2$ and auxiliary Banach spaces; however, Roe only considers the case in \cite{Roe:2005lr}, so we will also restrict ourselves to this case for the sake of simplicity.} of square summable functions on $\Gamma$. 

Let $A$ be a band-dominated operator on $\ell^2(\Gamma)$.  Then (\cite[Corollary 2.6]{Roe:2005lr}) the map
$$
\sigma(A):\Gamma\to \mathcal{L}(\ell^2(\Gamma)),~~~g\mapsto V_gAV_g^*
$$
has $*$-strongly precompact range, and so by the universal property of the Stone-\v{C}ech compactification, extends to a $*$-strongly continuous map
$$
\sigma(A):\beta\Gamma\to \mathcal{L}(\ell^2(\Gamma)).
$$
The \emph{Roe limit operator} of $A$ at a non-principal ultrafilter $\omega\in \partial \Gamma$, denoted $A_\omega$, is the image $\sigma(A)(\omega)$.  The \emph{Roe operator spectrum} of $A$, denoted $\opspecRoe(A)$, is the collection $\{A_\omega~|~\omega\in  \partial \Gamma\}$.
\end{definition}

We now show that the Roe operator spectrum is essentially the same as ours.  Note that as $E=\C$ is finite dimensional, all band-dominated operators on $\ell^2(\Gamma)$ are rich in the sense of Definition \ref{richdef} (cf.\ Remark \ref{richrem}).  

\begin{proposition}\label{comproe}
If $A$ is a band-dominated operator on $\ell^2(\Gamma)$ and $\omega$ is a non-principal ultrafilter on $\Gamma$, then $A_\omega=U_\omega^*\Phi_\omega(A)U_\omega$.  

In particular, we have an equality of sets of operators 
$$
\opspecRoe(A)=\{U_\omega^* \Phi_\omega(A)U_\omega~|~\omega\in \partial \Gamma\}.
$$
\end{proposition}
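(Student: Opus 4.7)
The plan is to reduce the equality $A_\omega = U_\omega^* \Phi_\omega(A) U_\omega$ to a pointwise check of matrix entries, since both sides are bounded operators on $\ell^2(\Gamma)$. Concretely, I want to verify that $\langle \delta_g, A_\omega \delta_h\rangle = \langle \delta_g, U_\omega^* \Phi_\omega(A) U_\omega \delta_h\rangle$ for all $g, h \in \Gamma$; each side will reduce to an ultralimit of matrix entries of $A$ taken along $\omega$, and the two resulting expressions will be seen to coincide.

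For the right-hand side, unpack $U_\omega$: since $U_\omega \delta_g = \delta_{b_\omega(g)}$, the $(g,h)$-entry equals $\Phi_\omega(A)_{b_\omega(g), b_\omega(h)}$. By Lemma \ref{limspgp}, the family $\{\rho_g\}_{g\in \Gamma}$ of right translations is a compatible family for $\omega$ satisfying $\rho_g(\omega) = b_\omega(g)$. Plugging into Definition \ref{limopdef} then yields
\begin{equation*}
\Phi_\omega(A)_{b_\omega(g), b_\omega(h)} \;=\; \lim_{x\to\omega} A_{\rho_g(x), \rho_h(x)} \;=\; \lim_{x\to\omega} A_{xg,\, xh}.
\end{equation*}

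For the left-hand side, the $*$-strong convergence $V_y A V_y^* \to A_\omega$ as $y\to\omega$ implies convergence of matrix coefficients:
\begin{equation*}
\langle \delta_g, A_\omega \delta_h\rangle \;=\; \lim_{y\to\omega}\, \langle \delta_g, V_y A V_y^* \delta_h\rangle.
\end{equation*}
A direct computation, unpacking the action of $V_y$ and $V_y^*$ on basis vectors, identifies $\langle \delta_g, V_y A V_y^* \delta_h\rangle$ with a matrix entry of $A$ at a translated pair of indices; with the convention for $V_y$ matching the right-translation convention used for the $\rho_g$, this entry is $A_{yg, yh}$, and the two ultralimits then coincide as expressions in a renamed dummy variable.

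The main obstacle is essentially bookkeeping: one must confirm that the convention for the shift $V_y$ in Roe's definition is compatible with the right-translation convention implicit in $b_\omega$ and the family $\{\rho_g\}$. Once this is set up the proof is an unraveling of definitions. If I wanted to sidestep the convention check, I would first establish the identity for band operators $A$ of propagation $r$, where both expressions $A_{xg,xh}$ and $A_{yg,yh}$ are determined by the finite patch of $A$ of radius $r$ and stabilize on sets of $\omega$-measure one (by bounded geometry plus strong discreteness), and then extend to all band-dominated operators by density using the contractivity of $\Phi_\omega$ from Theorem \ref{richhom}(\ref{cont}) together with the norm-continuous dependence of $\sigma(A)(\omega)$ on $A$.
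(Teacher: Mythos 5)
Your proposal is correct and follows essentially the same route as the paper's proof: both sides are compared entry-by-entry, with the $(g,h)$-entry of $U_\omega^*\Phi_\omega(A)U_\omega$ identified via the compatible family $\{\rho_g\}$ as $\lim_{x\to\omega}A_{xg,\,xh}$ and the corresponding entry of $A_\omega$ obtained from the $*$-strong limit defining $\sigma(A)(\omega)$. The convention check you flag is exactly the only delicate point, and the paper handles it the same way (by direct computation of $(V_gAV_g^*)_{xy}$).
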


\begin{proof}
It suffices to check that the two operators $A_\omega$ and $U_\omega^*\Phi_\omega(A)U_\omega$ have the same matrix entries.  Indeed, for $x,y\in G$, the $(x,y)^{\text{th}}$ matrix coefficient of $(A_\omega)$ is
$$
\lim_{g\to \omega}(V_gAV_g^*)_{x~y}=\lim_{g\to\omega}A_{gx~gy}. 
$$
On the other hand, the $(x,y)^\text{th}$ matrix coefficient of $U_\omega^*\Phi_\omega(A)U_\omega$ is (by definition of $U_\omega$ in terms of the map $g\mapsto \rho_g(\omega)$) the same as the $(\rho_x(\omega),\rho_y(\omega))^{\text{th}}$ matrix coefficient of $\Phi_\omega(A)$.  Recall that for any $x\in \Gamma$, $\rho_x$ is itself a partial translation on $\Gamma$ (with full domain and codomain) that is compatible with $\omega$ and takes $\omega$ to $\rho_x(\omega)$.  Hence by definition of $\Phi_\omega(A)$, the $(\rho_x(\omega),\rho_y(\omega))^{\text{th}}$ matrix coefficient of this operator is 
$$
\lim_{g\to\omega} A_{\rho_x(g)~\rho_y(g)}=\lim_{g\to\omega} A_{gx~gy},
$$
so we are done.
\end{proof}

We now look at the definition of limit operators used by Rabinovich, Roch and Silbermann \cite[Definition 1.2.1]{Rabinovich:2004qy}, as it applies to the sort of band-dominated operators we consider.   As we mentioned before, Rabinovich, Roch and Silbermann only consider $\Gamma=\Z^N$, but there is no real additional complexity in case of a more general group, so we consider that here.

\begin{definition}\label{rrslimop}
Let $p$ be a number in $(1,\infty)$ and $E$ be a fixed Banach space.  As usual, if $F$ is a subset of $\Gamma$, then we let $P_F$ denote the projection operator on $\ell^p_E(\Gamma)$ corresponding to the characteristic function of $F$.  

Let $\mathbf{h}=(h_n)$ be a sequence in $\Gamma$ that tends to infinity, and let $A$ be band-dominated operator on $\ell^p_E(\Gamma)$.  An operator $A_{\mathbf{h}}$ on $\ell^p_E(\Gamma)$ is the \emph{RRS limit operator} of $A$ with respect to $\mathbf{h}$  if for every finite subset $F$ of $\Z^N$, we have that 
$$
\|P_F(V_{h_n^{-1}}AV_{h_n}-A_{\mathbf{h}})\| \text{ and } \|(V_{h_n^{-1}}AV_{h_n}-A_{\mathbf{h}})P_F\|
$$
tend to zero as $n$ tends to infinity.  The operator $A$ is \emph{RRS rich} (\cite[Definition 1.2.5]{Rabinovich:2004qy}) if for every sequence $\mathbf{h}=(h_n)$ tending to infinity in $\Gamma$, there is a subsequence $\mathbf{h}'=(h_{n_k})$ for which the RRS limit operator $A_{\mathbf{h}'}$ exists.  

The \emph{RRS operator spectrum} of $A$, denoted $\opspecRRS(A)$, is the collection of all RRS limit operators for $A$.
\end{definition}

We start with a lemma giving a weak criterion for recognising RRS limit operators. 

\begin{lemma}\label{rrslimoplem}
Let $B$ and $A$ be band-dominated operators, and let $\mathbf{h}=(h_n)$ be a sequence in $\Gamma$ that tends to infinity.  Assume that for every finite subset $F$ of $\Gamma$ there exists a subsequence $(h_{n_k})$ of $\mathbf{h}$ such that 
$$
\lim_{k\to\infty}\|P_F(V_{h_{n_k}^{-1}}AV_{h_{n_k}}-B)P_F\|=0.
$$
Then there exists a subsequence $\mathbf{h}'$ of $\mathbf{h}$ such that the RRS limit operator $A_{\mathbf{h}'}$ exists and equals $B$.
\end{lemma}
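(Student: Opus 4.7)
The plan is a two-step argument: first extract a single subsequence $\mathbf{h}'$ along which the two-sided (sandwich) convergence $\|P_F(V_{h^{-1}}AV_h-B)P_F\|\to 0$ holds for \emph{every} finite $F\subseteq \Gamma$, then upgrade this to the one-sided convergence required by the definition of an RRS limit operator. Neither step is individually hard; the conceptual point is that band-domination of $A$ and $B$ is exactly what allows the upgrade.

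First, I would fix a nested exhaustion $F_1\subseteq F_2\subseteq \cdots$ of $\Gamma$ by finite sets, which is possible since $\Gamma$ is countable. The hypothesis is equivalent to $\liminf_n \|P_F(V_{h_n^{-1}}AV_{h_n}-B)P_F\|=0$ for every finite $F$; applied to $F=F_k$ it lets one choose indices $n_1<n_2<\cdots$ with $\|P_{F_k}(V_{h_{n_k}^{-1}}AV_{h_{n_k}}-B)P_{F_k}\| < 1/k$. For any fixed $j$ and $k\geq j$ one has $F_j\subseteq F_k$, hence $P_{F_j}=P_{F_j}P_{F_k}$, which gives $\|P_{F_j}(V_{h_{n_k}^{-1}}AV_{h_{n_k}}-B)P_{F_j}\|\leq \|P_{F_k}(\cdot)P_{F_k}\|<1/k$. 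Thus the diagonal $\mathbf{h}':=(h_{n_k})$ satisfies $\|P_F(V_{(h'_k)^{-1}}AV_{h'_k}-B)P_F\|\to 0$ for every finite $F$, since any such $F$ is contained in some $F_j$.

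Second, I would upgrade from two-sided to one-sided convergence. Fix a finite $F$ and $\epsilon>0$. Approximate $A$ and $B$ in norm by band operators $A',B'$ with a common propagation bound $r$ and $\|A-A'\|,\|B-B'\|<\epsilon/4$. Since conjugation by $V_h$ preserves propagation, $T':=V_{h^{-1}}A'V_h-B'$ has propagation at most $r$ for every $h$. Set $F':=\{y\in \Gamma : d(y,F)\leq r\}$, which is finite by bounded geometry. Since $(P_F T')_{xy}=0$ whenever $y\notin F'$, one has $P_FT'=P_FT'P_{F'}$; combined with $P_F=P_FP_{F'}$ this yields $\|P_FT'\|\leq \|P_{F'}T'P_{F'}\|$. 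Two applications of the triangle inequality using $\|T-T'\|<\epsilon/2$ (where $T:=V_{h^{-1}}AV_h-B$) then give
\[
\|P_F(V_{h^{-1}}AV_h-B)\|\leq \|P_{F'}(V_{h^{-1}}AV_h-B)P_{F'}\|+\epsilon.
\]
Applied along $\mathbf{h}'$ and combined with the first step (for the finite set $F'$), this gives $\limsup_k\|P_F(V_{(h'_k)^{-1}}AV_{h'_k}-B)\|\leq \epsilon$. Since $\epsilon$ was arbitrary, the left-cutoff norm tends to zero; the symmetric argument using $T'P_F=P_{F'}T'P_F$ handles the right-cutoff norm. Thus $\mathbf{h}'$ exhibits $B$ as the RRS limit operator of $A$.

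The only step that requires any genuine thought is the second: one must notice that band-domination, applied to \emph{both} $A$ and $B$, is exactly what lets a one-sided finite cut-off $P_F$ be replaced, up to an arbitrarily small error, by a two-sided sandwich $P_{F'}(\cdot)P_{F'}$ with an enlarged but still finite $F'$. Once this observation is in hand, the diagonal extraction in Step 1 is routine and the rest is just bookkeeping.
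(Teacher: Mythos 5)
Your proof is correct and follows essentially the same route as the paper's: a diagonal extraction to get two-sided convergence for all finite $F$ simultaneously, followed by the observation that finite propagation lets one replace the one-sided cut-off $P_F(\cdot)$ by a two-sided sandwich $P_{F'}(\cdot)P_{F'}$ over the enlarged (still finite) set $F'$. The only cosmetic difference is that you approximate both $A$ and $B$ by band operators and bound $\|P_F(V_{h^{-1}}AV_h-B)\|$ directly, whereas the paper approximates only $A$, shows the sequence $(P_FV_{h_{n_k}^{-1}}AV_{h_{n_k}})_k$ is Cauchy, and then identifies its limit with $P_FB$ via matrix entries; your version is marginally more streamlined but the underlying idea is identical.
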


\begin{proof}
Using a diagonal argument and that $\Gamma$ is countable, we may assume that there exists a subsequence of $(h_{n_k})$ such that for all finite $F$
\begin{equation}\label{both sides}
\lim_{k\to\infty}\|P_F(V_{h_{n_k}^{-1}}AV_{h_{n_k}}-B)P_F\|=0.
\end{equation}
Now, consider the sequence $(P_F(V_{h_{n_k}^{-1}}AV_{h_{n_k}}))_{k=1}^\infty$ for some fixed finite set $F$.  For given $\epsilon>0$, let $A'$ be any band operator such that $\|A-A'\|<\epsilon/2$, and let $r$ be the propagation of $A'$.  Then for any $k$,
\begin{align*}
\|P_F& (V_{h_{n_k}^{-1}}AV_{h_{n_k}})-P_F(V_{h_{n_k}^{-1}}AV_{h_{n_k}})P_{N_r(F)}\| \\ \leq & \|P_F(V_{h_{n_k}^{-1}}(A-A')V_{h_{n_k}})\|
+\|P_F(V_{h_{n_k}^{-1}}A'V_{h_{n_k}})P_{N_r(F)}-P_F(V_{h_{n_k}^{-1}}A'V_{h_{n_k}})\|\\ &+\|P_F(V_{h_{n_k}^{-1}}(A-A')V_{h_{n_k}}P_{N_r(F)})\| \\ < & \epsilon.
\end{align*}
Let $\epsilon>0$, and let $r$ be as above for $\epsilon/3$.  Fix a finite subset $F$ of $\Gamma$.  Let $K$ be such that for all $m,k\geq K$,
$$
\|P_{N_r(F)}(V_{h_{n_k}^{-1}}AV_{h_{n_k}}-V_{h_{n_m}^{-1}}AV_{h_{n_m}})P_{N_r(F)}\|<\epsilon/3
$$
(which exists by our assumption).  Hence for any $k,m\geq K$,
\begin{align*}
\|P_{F} & (V_{h_{n_k}^{-1}}AV_{h_{n_k}}-V_{h_{n_m}^{-1}}AV_{h_{n_m}})\| \\
& <2\epsilon/3+\|P_{N_r(F)}(V_{h_{n_k}^{-1}}AV_{h_{n_k}}-V_{h_{n_m}^{-1}}AV_{h_{n_m}})P_{N_r(F)}\| \\
& <\epsilon.
\end{align*}
Hence for any finite subset $F$ of $\Gamma$, the sequence $(P_F(V_{h_{n_k}^{-1}}AV_{h_{n_k}}))_k$ is Cauchy, and so convergent.  Similarly, for any $F$, the sequence $((V_{h_{n_k}^{-1}}AV_{h_{n_k}})P_F)_k$ is Cauchy so convergent.  Checking matrix entries using line \eqref{both sides}, we must have that these two sequences converge to $P_FB$ and $BP_F$.  Looking back at the definition of RRS limit operator, this completes the proof.
\end{proof}

\begin{proposition}\label{rrsrich}
Fix $p\in(0,1)$ and a Banach space $E$. Then a band-dominated operator $A$ on $\ell^p_E(X)$ is RRS rich if and only if it is rich in the sense of Definition \ref{richdef} above. Moreover
$$
\opspecRRS(A)=\{U_\omega^{-1}\Phi_\omega(A)U_\omega~|~\omega\in \partial \Gamma\}.
$$
\end{proposition}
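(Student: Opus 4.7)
The plan is to exploit the matrix-entry identification already used in the proof of Proposition~\ref{comproe}: the $(h,h')$-entry of $U_\omega^{-1}\Phi_\omega(A)U_\omega$ is $\lim_{x\to\omega}A_{xh,xh'}$, while the $(h,h')$-entry of the shifted operator $V_{g^{-1}}AV_g$ is $A_{gh,gh'}$. Thus the dictionary between RRS limit operators and the $\Phi_\omega(A)$'s is entry-by-entry, and I would organise the proof around it. As a preliminary reduction, using that the metric on $\Gamma$ is left-invariant and that closed balls are finite, every partial translation $t:D\to R$ compatible with a non-principal $\omega$ is eventually a right-multiplication: the sets $\{x\in D : x^{-1}t(x) = g\}$, indexed by $g$ in the finite ball around $e$ of radius $\sup d(x,t(x))$, partition $D$, and $\omega$ selects a unique $g_t$. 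So every $\omega$-limit appearing in Definition~\ref{richdef} reduces to one of the form $\lim_{x\to\omega}A_{xg_t,xg_s}$.

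For \emph{rich $\Rightarrow$ RRS rich}, given $\mathbf{h}=(h_n)\to\infty$, I would fix any non-principal ultrafilter $\mu$ on $\N$ and set $\omega := \lim_{n\to\mu}h_n \in \partial\Gamma$. Setting $B := U_\omega^{-1}\Phi_\omega(A)U_\omega$, richness of $A$ ensures that for each finite $F\subseteq\Gamma$ and each $\epsilon>0$, the set $\{x : \|P_F(V_{x^{-1}}AV_x - B)P_F\| < \epsilon\}$ has $\omega$-measure $1$; by the definition of $\omega$, infinitely many $h_n$ lie in this set. Letting $\epsilon = 1/k$ and choosing $h_{n_k}$ from the resulting nested sets with $n_k$ strictly increasing produces, for each fixed $F$, a subsequence along which $\|P_F(V_{h_{n_k}^{-1}}AV_{h_{n_k}} - B)P_F\| \to 0$. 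Lemma~\ref{rrslimoplem} then delivers a further subsequence $\mathbf{h}'$ with $A_{\mathbf{h}'}$ existing and equal to $B$; this simultaneously shows that $U_\omega^{-1}\Phi_\omega(A)U_\omega \in \opspecRRS(A)$ for every $\omega\in\partial\Gamma$.

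For \emph{RRS rich $\Rightarrow$ rich}, by the reduction above it suffices to show $\lim_{x\to\omega}A_{xg_t,xg_s}$ exists in norm in $\mathcal{L}(E)$ for each non-principal $\omega$ and each $g_t,g_s \in \Gamma$. I would argue that $\{A_{hg_t,hg_s} : h\in\Gamma\}$ has relatively compact closure in $\mathcal{L}(E)$: any sequence in $\Gamma$ either has a bounded subsequence (hence a constant sub-subsequence, by bounded geometry) or refines to one tending to infinity, in which case RRS richness extracts a further subsequence along which the $(g_t,g_s)$-entry of $V_{h_{n_k}^{-1}}AV_{h_{n_k}}$ converges in norm. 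Combining relative compactness with Lemma~\ref{limlem} produces the desired $\omega$-limit. The remaining inclusion $\opspecRRS(A) \subseteq \{U_\omega^{-1}\Phi_\omega(A)U_\omega : \omega\in\partial\Gamma\}$ then follows: given $A_{\mathbf{h}}\in\opspecRRS(A)$, pick any non-principal $\mu$ on $\N$ and set $\omega := \lim_{n\to\mu}h_n$; each sequential matrix-entry limit $\lim_n A_{h_ng, h_ng'}$ already exists by hypothesis on $\mathbf{h}$, so it equals its $\mu$-limit, which by Stone--\v{C}ech universality coincides with $\lim_{x\to\omega}A_{xg,xg'} = (U_\omega^{-1}\Phi_\omega(A)U_\omega)_{g,g'}$.

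The chief technical point is bridging sequential convergence (inherent to the RRS picture) with ultrafilter convergence (inherent to ours), since a general ultrafilter on $\Gamma$ is not a sequential limit. This is resolved by working entry-by-entry in the metric space $\mathcal{L}(E)$, where relative sequential compactness coincides with relative compactness, so that convergence along carefully chosen subsequences upgrades to convergence along every ultrafilter supported in the resulting compact closure.
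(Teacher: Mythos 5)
Your proposal is correct and follows essentially the same route as the paper: both directions reduce to the entry-by-entry dictionary $\bigl(V_{g^{-1}}AV_g\bigr)_{xy}=A_{gx\,gy}$ versus $\lim_{g\to\omega}A_{gx\,gy}$, with ``RRS rich $\Rightarrow$ rich'' obtained from norm-precompactness of the sets of matrix entries together with Lemma \ref{limlem}, and ``rich $\Rightarrow$ RRS rich'' (plus both inclusions of spectra) obtained by choosing $\omega$ supported on the given sequence, extracting a subsequence from the $\omega$-measure-one sets, and invoking Lemma \ref{rrslimoplem}. The only cosmetic differences are that you re-derive the reduction of compatible partial translations to right multiplications (which is Lemma \ref{limspgp} in the paper) and realise $\omega$ as a pushforward of an ultrafilter on $\N$ rather than directly as an ultrafilter charging $\{h_n\}$; these are equivalent.
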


\begin{proof}
We first summarise some formulas for $(x,y)^\text{th}$ matrix entries of limit operators. If $A_{\mathbf{h}}$ is a RRS limit operator of $A$ for some sequence $\mathbf{h}=(h_n)_{n\in\N}$, then
\begin{equation}\label{eq:rrs-matrix-entry}
  (A_{\mathbf{h}})_{x~y} = \lim_{n\to\infty}\left(V_{h_{n}^{-1}}AV_{h_{n}}\right)_{x~y}=
  \lim_{n\to\infty}A_{h_nx~h_ny} =
    \lim_{n\to\infty}A_{\rho_x(h_{n})~\rho_y(h_{n})}.
\end{equation}
On the other hand, if $\Phi_\omega(A)$ is the limit operator in the sense of Definition \ref{richdef}, then by our definitions and Lemma \ref{limspgp}
\begin{equation}\label{eq:limop-matrix-entry}
  \left(U_\omega^{-1}\Phi_\omega(A)U_\omega\right)_{x~y} = \lim_{g\to\omega} A_{\rho_x(g)~\rho_y(g)}= \lim_{g\to\omega}A_{xg~yg}.
\end{equation}
Furthermore, if $\omega\in\partial\Gamma$ is such that $\omega(\{h_n\mid n\in\N\})=1$ and the limit \eqref{eq:rrs-matrix-entry} exists, then the limit \eqref{eq:limop-matrix-entry} exists, and the two limits are equal.

Embarking on the proof of Proposition,
assume first that $A$ is an RRS rich band-dominated operator.  To see that $A$ is rich in the sense of Definition \ref{richdef} it suffices to show that for any $x,y\in \Gamma$, the limit \eqref{eq:limop-matrix-entry} exists (for the norm topology on $\mathcal{L}(E)$).  Note, however, that the condition of RRS richness implies that for any $x,y\in \Gamma$, any sequence in the set
$$
\{(V_g^{-1}AV_g)_{x~y}\in \mathcal{L}(E)~|~g\in \Gamma\}
$$
has a convergent subsequence (with limit possibly not in the set) for the norm topology on $\mathcal{L}(E)$.  However, this set is precisely equal to
$$
\{A_{xg~yg}~|~g\in \Gamma\}
$$ 
and thus we may conclude that this set is norm precompact.  Hence the limit in line \eqref{eq:limop-matrix-entry} exists by the universal property of $\beta\Gamma$.

We now prove that if $A_{\mathbf{h}}\in \opspecRRS(A)$ for some sequence $\mathbf{h}=(h_n)_{n\in\N}$, then there exists $\omega\in\partial\Gamma$, such that $A_{\mathbf{h}}=U_\omega^{-1}\Phi_\omega(A)U_\omega$. Take any $\omega\in\partial\Gamma$, such that $\omega(\{h_n\mid n\in\N\})=1$. We already know that $\Phi_\omega(A)$ exists, so by the observation at the beginning of the proof, the matrix coefficients of $U_\omega^{-1}\Phi_\omega(A)U_\omega$ are the same as the matrix coefficients of $A_{\mathbf{h}}$, hence the operators are the same. Summarising, $$
\opspecRRS(A)\subseteq\{U_\omega^{-1}\Phi_\omega(A)U_\omega~|~\omega\in \partial \Gamma\}.
$$

For the converse assume that $A$ is rich in the sense of Definition \ref{richdef}. Let $\mathbf{h}=(h_n)$ be a sequence tending to infinity in $\Gamma$ and write $H_0=\{h_n~|~n\in \N\}$.  Let $\omega$ be a non-principal ultrafilter on $\Gamma$ such that $\omega(H_0)=1$.

Let $F$ be a finite subset of $\Gamma$. Now, as $A$ is rich, all the limits 
$$
\lim_{g\to\omega} A_{\rho_x(g)~\rho_y(g)}
$$ 
exist as $x,y$ range over $\Gamma$.  As $F$ is finite, there is a subset $H_1$ of $H_0$ such that
$$
\|A_{\rho_x(g)~\rho_y(g)}-A_{\rho_x(h)~\rho_y(h)}\|<2^{-1}
$$
for all $g,h$ in $H_1$ and $x,y\in F$, and so that $\omega(H_1)=1$.  Continuing in this way, we get a nested sequence of subsets 
$$
H_0\supseteq H_1\supseteq H_2\supseteq
$$
of $\Gamma$, all of $\omega$-measure one, such that for all $x,y\in F$ and all $g,h\in H_k$,
$$
\|A_{\rho_x(g)~\rho_y(g)}-A_{\rho_x(h)~\rho_y(h)}\|<2^{-k}.
$$
Choose now any subsequence $(h_{n_k})$ of $\mathbf{h}$ such that $h_{n_k}$ is in $H_k$. Then the sequence
\begin{equation}\label{eq:mat-ent-rrs-seq}
  \left(A_{\rho_x(h_{n_k})~\rho_y(h_{n_k})}\right)_{k=1}^\infty
\end{equation}
is Cauchy in $\mathcal{L}(\ell^p_E(\Gamma))$ and so convergent for all $x,y\in F$. The limit of this sequence is in fact equal to the $(x,y)^{\text{th}}$ matrix entry of $U_\omega^{-1}\Phi_\omega(A)U_\omega$, by our choice of $h_{n_k}$ and the computation \eqref{eq:limop-matrix-entry}. Consequently, by finiteness of $F$, the sequence
$$
\big(P_FV_{h_{n_k}^{-1}}AV_{h_{n_k}}P_F\big)_{k=1}^\infty
$$
converges in $\mathcal{L}(\ell^p_E(\Gamma))$ to $P_FU_\omega^{-1}\Phi_\omega(A)U_\omega P_F$. Using Lemma \ref{rrslimoplem}, we see that $U_\omega^{-1}\Phi_\omega(A)U_\omega$ is the RRS limit operator $A_{\mathbf{h'}}$ for some subsequence $\mathbf{h'}$ of $\mathbf{h}$.

From this consideration, we can draw two conclusions: if $A$ is rich in the sense of Definition \ref{richdef}, then it is also RRS rich, and for any $\omega\in\partial\Gamma$, the operator $U_\omega^{-1}\Phi_\omega(A)U_\omega$ belongs to $\opspecRRS(A)$. This finishes the proof of the Proposition.
\end{proof}

\section{Groupoid $C^*$-algebra approach}\label{sec:groupoids}

In this section, we sketch the connections of our approach to the theory of groupoids and their $C^*$-algebras.  The machinery of groupoid $C^*$-algebras allows a relatively short proof of the part of Theorem \ref{main} showing that condition (2) is equivalent to condition (1), at least in the case of band-dominated operators on $\ell^2_\C(X)$ for a space $X$.  The approach is very similar to Roe's work in the context of discrete groups \cite{Roe:2005lr}.  As this approach through groupoids assumes quite a lot of machinery, it is difficult to argue that it is genuinely `simpler' than the approach in the main body of the paper, but it does provide a short conceptual proof for those readers familiar with the necessary background.

We have not made any effort to keep this material self-contained: a basic reference for what we need from groupoid $C^*$-algebra theory is Renault's notes \cite{Renault:2009zr}.  

Let $G$ be a locally compact, Hausdorff \'{e}tale groupoid with unit space $G^{(0)}$ and source and range maps $r:G\to G^{(0)}$, $s:G\to G^{(0)}$.  For each $x\in G^{(0)}$, write
$$
G_x=s^{-1}(x)=\{g\in G~|~s(g)=x\}
$$
for the source fibre at $x$; as $G$ is assumed \'{e}tale, the topology this inherits from $G$ is the discrete topology.  Let $C_c(G)$ denote the convolution $*$-algebra of compactly supported, complex-valued  continuous functions on $G$.

Any point $x$ in the unit space $G^{(0)}$ gives rise to a \emph{regular representation} (compare \cite[2.3.4]{Renault:2009zr}) 
$$
\pi_x:C_c(G)\to \mathcal{B}(\ell^2(G_x))
$$
defined for $f\in C_c(G)$, $v\in \ell^2(G_x)$ and $g\in G_x$ by
$$
(\pi_x(f)v)(g)=\sum_{h\in G_x}f(gh^{-1})v(h).
$$
It is not difficult to check that $\pi_x$ is a well-defined $*$-homomorphism.  The \emph{reduced norm} on $C_c(G)$ is then defined by 
$$
\|f\|_r:=\sup_{x\in G^{(0)}}\|\pi_x(f)\|,
$$
and the reduced groupoid $C^*$-algebra $C^*_r(G)$ is the completion of the $*$-algebra $C_c(G)$ in this norm.

Let $X$ be a space as in Definition \ref{sdbg}.  Let $G(X)$ be the coarse groupoid on $X$ as introduced by Skandalis, Tu, and Yu in \cite{Skandalis:2002ng} (see also \cite[Chapter 10]{Roe:2003rw}). As a set $G(X)$ identifies with $\cup_{r>0}\overline{E_r}^{\beta X\times \beta X}$, where $E_r$ is defined by
$$
E_r:=\{(x,y)\in X~|~d(x,y)\leq r\}.
$$
The groupoid operations are the restriction of the pair groupoid operations from $\beta X\times \beta X$.  The topology on $G(X)$ agrees with the subspace topology from $\beta X\times\beta X$ on each $\overline{E}_r$, and is globally defined by stipulating that a subset $U$ of $G$ is open if and only if $U\cap \overline{E}_r$ is open in $\overline{E}_r$ for all $r>0$ (this is \emph{not} the subspace topology from $\beta X\times\beta X$!).  Equipped with this topology, $G(X)$ is a locally compact, $\sigma$-compact (not second countable) Hausdorff, \'{e}tale groupoid.  Write $G_\infty(X)$ for the restriction of the coarse groupoid to the closed saturated subset $\partial X$ of $\beta X=G(X)^{(0)}$ and $X\times X$ for the restriction of $G(X)$ to the open saturated subset $X$ of $\beta X$ (which is just the pair groupoid of $X$, with the discrete topology).  Note that this gives a decomposition 
$$
G(X)=X\times X\sqcup G_\infty(X)
$$ 
and a corresponding short exact sequence of convolution algebras
$$
0\to C_c(X\times X) \to C_c(G(X))\to C_c(G_\infty(X))\to 0.
$$
Writing $C^*_r(X\times X)$, $C^*_r(G(X))$, and $C^*_r(G_\infty(X))$ for the reduced groupoid $C^*$-algebras of $X\times X$, $G(X)$ and $G_\infty(X)$ respectively, we may complete this sequence to a sequence of $C^*$-algebras 
\begin{equation}\label{gpdses}
0\to C^*_r(X\times X)\to C^*_r(G(X))\to C^*_r(G_\infty(X))\to 0.
\end{equation}

The following lemma is essentially proved in \cite[Proposition 10.29]{Roe:2003rw}.  

\begin{lemma}\label{gpdbdo}
Let $X$ be a space.  Let $f$ be an element of the convolution $*$-algebra $C_c(G(X))$, so $f$ is a continuous function supported in $\overline{E_r}$ for some $r>0$. We interpret $f$ as a bounded function from $X\times X$ to $\C$ which is zero on the complement of $E_r$. Define an operator $A_f$ on $\ell^2(X)$ by setting its matrix coefficients to be
$$
(A_f)_{xy}=f(x,y);
$$
note that $A_f$ is a band operator as $f$ is supported on $E_r$.
Then the assignment
$$
\Psi:C_c(G(X))\to \AXtwoC,~~~f\mapsto A_f
$$
extends to a $*$-isomorphism of $C^*_r(G(X))$ onto $\AXtwoC$, the $C^*$-algebra of band-dominated operators on  $\ell^2(X)$.  Moreover, $\Psi$ takes the ideal $C^*_r(X\times X)$ onto the compact operators on $\ell^2(X)$.  \qed
\end{lemma}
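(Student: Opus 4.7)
First, I would check that $\Psi$ is a $*$-algebra homomorphism on $C_c(G(X))$. For $f_1, f_2 \in C_c(G(X))$ supported in some $\overline{E_r}$, the groupoid convolution restricted to principal pairs $(x,y) \in X \times X$ reads $(f_1 * f_2)(x,y) = \sum_{z \in X} f_1(x,z) f_2(z,y) = (A_{f_1} A_{f_2})_{xy}$, and the involution $f^*(g) = \overline{f(g^{-1})}$ becomes $f^*(x,y) = \overline{f(y,x)}$, matching the matrix adjoint. Moreover, since $E_r$ is open and discrete inside $\overline{E_r}$ (by bounded geometry, so that $\overline{E_r}$ is homeomorphic to the Stone--\v{C}ech compactification of the discrete space $E_r$), every band operator of propagation at most $r$ has matrix entries extending uniquely to a continuous function on $\overline{E_r}$. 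Hence $\Psi$ restricts to a bijection between $C_c(G(X))$ and the algebra of band operators on $\ell^2(X)$.

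Second, I would compute both families of regular representations explicitly. For a principal unit $x \in X$, the source fibre $G(X)_x = \{(y,x) : y \in X\}$ is in natural bijection with $X$ via $(y,x) \mapsto y$, and under this identification $\pi_x$ acts exactly as matrix multiplication by $A_f$ on $\ell^2(X)$; thus $\|\pi_x(f)\| = \|A_f\|$, giving $\|A_f\| \leq \|f\|_r$. For a non-principal $\omega \in \partial X$, the source fibre $G(X)_\omega = \{(\alpha,\omega) : \alpha \in \beta X \text{ compatible with } \omega\}$ is in natural bijection with the limit space $X(\omega)$, and a direct matrix-entry computation using continuity of $f$ on $\overline{E_r}$ and the definition of $\Phi_\omega$ shows $\pi_\omega(f) = \Phi_\omega(A_f)$ as operators on $\ell^2(X(\omega))$. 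Corollary \ref{limopbound} then supplies $\|\pi_\omega(f)\| = \|\Phi_\omega(A_f)\| \leq \|A_f\|$, so taking the supremum yields $\|f\|_r \leq \|A_f\|$.

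Combining these, $\Psi$ is isometric on $C_c(G(X))$ and extends to an injective $*$-homomorphism $C^*_r(G(X)) \to \AXtwoC$ whose image is closed and contains all band operators, hence equals $\AXtwoC$. For the ideal statement, $X \times X$ is an open subgroupoid of $G(X)$, so $C^*_r(X \times X)$ embeds into $C^*_r(G(X))$; on $C_c(X \times X)$ the map $\Psi$ produces operators with matrix entries supported on a compact, and hence finite, subset of $X \times X$, i.e.\ finite-rank operators. Since $C^*_r(X \times X)$ of the pair groupoid is well-known to be $\mathcal{K}(\ell^2(X))$, taking closures gives precisely the compact operators.

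The main conceptual content is the identification $\pi_\omega \circ \Psi = \Phi_\omega$, which reveals the boundary regular representations of the coarse groupoid as the limit operators constructed in Section \ref{sec:limitops}. Once this is recognised, the only non-trivial inequality is $\|\Phi_\omega(A_f)\| \leq \|A_f\|$, and this is exactly Corollary \ref{limopbound}; the combinatorial substance (Propositions \ref{ultequiv} and \ref{concretelimop}) is already absorbed there.
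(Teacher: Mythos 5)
Your argument is correct. Note that the paper does not actually prove this lemma: it is stated with a reference to \cite[Proposition 10.29]{Roe:2003rw} and a \qed, so there is no in-paper proof to compare against. Your reconstruction is a legitimate one, and it has the pleasant feature of staying entirely inside the paper's own machinery: the identification of $C_c(G(X))$ with the band operators via $\overline{E_r}\cong\beta E_r$, the observation that every principal regular representation $\pi_x$ is unitarily equivalent to the standard matrix representation (so $\sup_{x\in X}\|\pi_x(f)\|=\|A_f\|$), and then the identity $\pi_\omega(f)=U\,\Phi_\omega(A_f)\,U^{-1}$ together with Corollary \ref{limopbound} to get $\|\pi_\omega(f)\|\leq\|A_f\|$ on the boundary. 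Two small points worth flagging. First, the identity $\pi_\omega\circ\Psi=\Phi_\omega$ is exactly Lemma \ref{gpdlimopsame}, which in the paper is stated \emph{after} this lemma and formally invokes $\Psi$ as an isomorphism; there is no circularity, since its proof only uses the formula $\Psi(f)=A_f$ on $C_c(G(X))$ and the richness of $A_f$ (automatic here because $E=\C$ is finite dimensional, Remark \ref{richrem}), but you should say so explicitly if you present the argument in this order. Second, your parenthetical that $\overline{E_r}$ is homeomorphic to $\beta E_r$ is the nontrivial input \cite[Corollary 10.18]{Roe:2003rw} (it uses that $E_r$ is a controlled set, not merely bounded geometry in the abstract), and it is also what makes $\overline{E_r}$ clopen in $G(X)$, so that the continuous extension of a bounded function on $E_r$, extended by zero, really lies in $C_c(G(X))$; this is the step that gives surjectivity of $\Psi$ onto the band operators. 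With those caveats made explicit, the proof is complete.
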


\begin{definition}\label{gpdlimopdef}
Let $X$ be a space.  Let $f$ be an element of $C^*_r(G(X))$.  For any $\omega\in \beta X$, the associated  \emph{limit operator over $\omega$} is
$$
\pi_\omega(f)\in \mathcal{L}(\ell^2(G(X)_\omega)).
$$
\end{definition}

\begin{lemma}\label{gpdlimopsame}
Let $\omega$ a non-principal ultrafilter on a space $X$.  Define 
$$
F:X(\omega)\to G(X)_\omega,~~~\alpha\mapsto (\alpha,\omega).
$$
Then $F$ is a bijection.  

Moreover, if 
$$
U:\ell^2(G(X)_\omega)\to \ell^2(X(\omega)),~~~(Uv)(\alpha):=v(\alpha,\omega)
$$
is the unitary isomorphism induced by $F$ and  
$$
\Psi:C^*_r(G(X))\to \AXtwoC,~~~\Psi(f)=A_f
$$ 
is the canonical $*$-isomorphism from Lemma \ref{gpdbdo}, then 
$$
U^*\Phi_\omega(\Psi(f))U=\pi_\omega(f)
$$
where $\Phi_\omega(\Psi(f))$ is as in Definition \ref{limopdef} and $\pi_ \omega(f)$ is as in Definition \ref{gpdlimopdef}.
\end{lemma}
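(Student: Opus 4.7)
The plan is to proceed in three steps: first establish that $F$ is a bijection by matching the ultrafilter description of compatibility with the groupoid structure of $G(X)$; then verify the claimed operator identity on the dense $*$-subalgebra $C_c(G(X))$ by computing matrix coefficients; finally extend by continuity.

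For the bijectivity of $F$, injectivity is immediate. For well-definedness, if $\alpha\in X(\omega)$ is witnessed by a partial translation $t\colon D\to R$ with $d(x,t(x))\leq r$, then the graph of $t$ sits inside $E_r$ and the ultrafilter it induces on $E_r$ (via the bijection $D\to\Gamma_t$) projects to $(\alpha,\omega)$, so $(\alpha,\omega)\in\overline{E_r}\subseteq G(X)$. For surjectivity, given $(\alpha,\omega)\in G(X)_\omega$ there is some $r>0$ with $(\alpha,\omega)\in\overline{E_r}$, hence an ultrafilter $\mu$ on $E_r$ whose coordinate projections to $\beta X$ are $\alpha$ and $\omega$. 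Running the inductive construction from the proof of Lemma \ref{decomp} on $E_r$ itself decomposes it as a finite disjoint union of graphs $\Gamma_{t_1},\dots,\Gamma_{t_N}$ of partial translations with $d(x,t_i(x))\leq r$; exactly one $\Gamma_{t_i}$ has $\mu$-measure one, and pulling $\mu|_{\Gamma_{t_i}}$ back through the coordinate projection $p_2\colon\Gamma_{t_i}\to D_i$ shows that $\omega(D_i)=1$ and $t_i(\omega)=\alpha$, so $\alpha\in X(\omega)$ and $F(\alpha)=(\alpha,\omega)$.

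For the operator identity it suffices to check matrix coefficients against the canonical bases for $f\in C_c(G(X))$. The convolution formula gives $\pi_\omega(f)_{(\gamma,\omega),(\delta,\omega)}=f((\gamma,\omega)(\delta,\omega)^{-1})=f(\gamma,\delta)$. On the other side, Definition \ref{limopdef} combined with Lemma \ref{gpdbdo} gives $\Phi_\omega(\Psi(f))_{\gamma\delta}=\lim_{x\to\omega}f(t_\gamma(x),t_\delta(x))$, where $t_\gamma,t_\delta$ are partial translations compatible with $\omega$ sending $\omega$ to $\gamma,\delta$ respectively. The net $(t_\gamma(x),t_\delta(x))$ sits in a fixed $\overline{E_s}$ and converges in $\beta X\times\beta X$ to $(\gamma,\delta)$; since the topology on $G(X)$ agrees with that of $\beta X\times\beta X$ on each $\overline{E_s}$, continuity of $f$ on $G(X)$ yields the limit $f(\gamma,\delta)$. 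The definition of $U$ as the unitary induced by $F$ then converts one matrix coefficient into the other. Finally, because both $f\mapsto\pi_\omega(f)$ and $f\mapsto U^*\Phi_\omega(\Psi(f))U$ are $*$-homomorphisms contractive for the reduced norm on $C^*_r(G(X))$, the equality on $C_c(G(X))$ extends to all of $C^*_r(G(X))$.

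The main technical obstacle is the surjectivity in step one: identifying an abstract point of $G(X)_\omega$ with a concrete partial translation witnessing compatibility. This is where one genuinely needs both the identification of the topology of $G(X)$ on each piece $\overline{E_r}$ with the Stone--\v{C}ech closure in $\beta X\times\beta X$, and the bounded-geometry decomposition of $E_r$ into finitely many partial-translation graphs. Once bijectivity is in place, the matrix coefficient computation and the density/continuity extension are routine.
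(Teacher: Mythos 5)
Your proposal is correct and follows essentially the same route as the paper: surjectivity of $F$ via the identification of $\overline{E_r}$ with the Stone--\v{C}ech compactification of $E_r$ together with the decomposition of $E_r$ into graphs of partial translations from Lemma \ref{decomp}, then the matrix-coefficient computation on $C_c(G(X))$ using continuity of $f$ on each $\overline{E_s}$, and finally extension by density and contractivity.
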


\begin{proof}
Note first that $F$ is well-defined as if $(x_\lambda)$ is a net converging to $\omega$ and $t_\alpha$ is a partial translation compatible with $\omega$ such that $t_\alpha(\omega)=\alpha$, then (possibly after passing to a subnet of $x_\lambda$ in the domain of $t_\alpha$) $(t_\alpha(x_\lambda),x_\lambda)$ is a net in some $E_r$ that converges to $(\alpha,\omega)$.  It follows from this that $(\alpha,\omega)$ is in $G(X)_\omega$.  

The map $F$ is clearly injective.  To see surjectivity, take $(\alpha,\omega)\in G(X)_\omega$, say $(\alpha,\omega)\in \overline{E_r}^{\beta X\times \beta X}$ for some $r\geq 0$. Recall \cite[Corollary 10.18]{Roe:2003rw} that the inclusion $E_r\to X\times X$ extends to a homeomorphism
$$
\overline{E_r}^{\beta(X\times X)} \to \overline{E_r}^{\beta X\times \beta X} \subseteq \beta X\times \beta X,
$$
whence we can think of $(\alpha,\beta)$ as an element of $\beta(X\times X)$, i.e.~an ultrafilter on $X\times X$, which assigns $1$ to the set $E_r$. Now decomposing $E_r$ into a finite disjoint union of graphs of partial translations (see for example the proof of Lemma \ref{decomp}) yields a partial translation, say $t: D\to R$, such that
$$
(\alpha,\omega) \in \overline{\left\{(t(x),x) \mid x\in D \right\}}^{\beta(X\times X)}.
$$
Using \cite[discussion in 10.18--10.24]{Roe:2003rw} again, we conclude that $\omega(D)=1$ (so that $t$ is compatible with $\omega$), and that $t(\omega)=\alpha$. Hence $\alpha\in X(\omega)$.

To complete the proof, we compare matrix coefficients.  Take $f\in C_c(G(X))$.  Given $(\alpha,\omega),(\beta,\omega)\in G(X)_\omega$, we obtain
$$
\pi_\omega(f)_{(\alpha,\omega)(\beta,\omega)} = f((\alpha,\omega)\circ(\omega,\beta)) = f((\alpha,\beta)).
$$
Say $t_\alpha$ and $t_\beta$ are compatible with $\omega$ and such that $t_\alpha(\omega)=\alpha$ and $t_\beta(\omega)=\beta$. Then by continuity of $f$, we have
$$
f((\alpha,\beta))=\lim_{x\to \omega}f(t_\alpha(x),t_\beta(x));
$$ 
on the other hand, by definition of $\Psi$,
$$
\lim_{x\to \omega}f(t_\alpha(x),t_\beta(x))=\lim_{x\to \omega} (\Psi(f))_{t_\alpha(x)~t_\beta(x)} = \Phi_\omega(\Psi(f))_{\alpha\beta}.
$$
Putting this together 
$$
\Phi_\omega(\Psi(f))_{\alpha\beta}=\pi_\omega(f)_{(\alpha,\omega)(\beta,\omega)},
$$
which is the desired statement for $f\in C_c(G(X))$; the proof is completed by continuity of $U$.
\end{proof}

\begin{theorem}\label{maingpd}
Say $X$ is a space with property A. Let $\Psi:C^*_r(G(X))\to \AXtwoC$ be the canonical isomorphism from Lemma \ref{gpdbdo}. Let $f$ be an operator in $C^*_r(G(X))$. Then the following are equivalent.
\begin{enumerate}[(1)]
\item $\Psi(f)$ is Fredholm.
\item $f$ is invertible in $C^*_r(G_\infty(X))$.
\item There exists $c>0$ such that the following holds. For each $\omega$ in $\partial X$, the operator $\pi_\omega(f)$ is invertible, and 
$$
\|\pi_\omega(f)^{-1}\|\leq c.
$$
\item There exists $c>0$ such that the following holds. For each $\omega$ in $\partial X$, the operator $\Phi_\omega(\Psi(f))$ is invertible, and 
$$
\|\Phi_\omega(\Psi(f))^{-1}\|\leq c.
$$
\end{enumerate}
\end{theorem}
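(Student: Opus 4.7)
The plan is to reduce the theorem to the single nontrivial equivalence $(2)\Leftrightarrow(3)$ and handle the other implications by formal considerations. The equivalence $(3)\Leftrightarrow(4)$ is immediate from Lemma \ref{gpdlimopsame}: the unitary $U:\ell^2(G(X)_\omega)\to\ell^2(X(\omega))$ conjugates $\pi_\omega(f)$ to $\Phi_\omega(\Psi(f))$, so invertibility and the norm of the inverse are preserved at each $\omega$, and a uniform bound $c$ is shared by both families. For $(1)\Leftrightarrow(2)$, I would apply $\Psi$ to the short exact sequence \eqref{gpdses}. Since $E=\C$ is finite-dimensional, Remark \ref{pcomrem} identifies the $\mathcal{P}$-compact operators with the ordinary compact operators, and as finite rank matrix units are band operators, $\mathcal{K}(\ell^2(X))\subseteq\AXtwoC$; Lemma \ref{gpdbdo} then identifies $C^*_r(X\times X)$ with $\mathcal{K}(\ell^2(X))$ and $C^*_r(G_\infty(X))$ with $\AXtwoC/\mathcal{K}(\ell^2(X))$. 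Since $\AXtwoC$ is a unital C*-subalgebra of $\mathcal{L}(\ell^2(X))$ containing the compacts, Fredholmness of $\Psi(f)$ is equivalent to invertibility of its class in this quotient, giving $(1)\Leftrightarrow(2)$.

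The direction $(2)\Rightarrow(3)$ is formal. Because $\partial X$ is closed in the compact space $\beta X$, the unit space of $G_\infty(X)$ is compact and $C^*_r(G_\infty(X))$ is unital; applying the $*$-homomorphism $\pi_\omega$ to an equation $fg=gf=1$ in $C^*_r(G_\infty(X))$ yields $\pi_\omega(f)^{-1}=\pi_\omega(g)$ with norm at most $\|g\|$, giving a uniform constant $c=\|g\|$ independent of $\omega$.

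The substance of the theorem is $(3)\Rightarrow(2)$, and this is where property A is used. The plan is to invoke the theorem of Skandalis, Tu and Yu \cite{Skandalis:2002ng} identifying property A of $X$ with topological amenability of the coarse groupoid $G(X)$; amenability descends to the restriction $G_\infty(X)$ on the closed saturated subset $\partial X$. For an \'etale amenable groupoid $G$ with compact unit space, a standard inversion criterion asserts that $a\in C^*_r(G)$ is invertible if and only if every $\pi_x(a)$ is invertible and $\sup_x\|\pi_x(a)^{-1}\|<\infty$; this follows from the spectral identity $\operatorname{sp}_{C^*_r(G)}(a^*a)=\overline{\bigcup_x \operatorname{sp}(\pi_x(a^*a))}$ for amenable $G$, which in turn uses that the reduced and full C*-algebras coincide and that the family of regular representations jointly detects positivity. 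Applying this criterion to $f\in C^*_r(G_\infty(X))$ completes the proof.

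The main obstacle is this inversion criterion for amenable \'etale groupoids: it is the only step at which property A is genuinely needed, and its necessity is precisely reflected in Corollary \ref{aneccesary}, where ghost operators on non--property-A spaces produce elements that are invisible to every $\pi_\omega$ yet fail to be Fredholm.
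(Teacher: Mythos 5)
Your reductions $(3)\Leftrightarrow(4)$ and $(2)\Rightarrow(3)$ match the paper, but the proposal mislocates where property A is actually needed, and in doing so leaves a genuine gap in $(1)\Leftrightarrow(2)$. You assert that Lemma \ref{gpdbdo} identifies $C^*_r(G_\infty(X))$ with $\AXtwoC/\mathcal{K}(\ell^2(X))$. Lemma \ref{gpdbdo} only identifies $C^*_r(G(X))$ with $\AXtwoC$ and carries $C^*_r(X\times X)$ onto the compacts; identifying the quotient with $C^*_r(G_\infty(X))$ is exactly the statement that the reduced sequence \eqref{gpdses} is exact at the middle term, and this fails in general. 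The kernel of $C^*_r(G(X))\to C^*_r(G_\infty(X))$ consists of the ghost operators (cf.\ Proposition \ref{ghostlem}), which strictly contain the compacts when property A fails --- this is precisely the content of Corollary \ref{aneccesary}. Without exactness, condition (2) only gives invertibility of $\Psi(f)$ modulo the ghost ideal, not modulo $\mathcal{K}(\ell^2(X))$, so $(2)\Rightarrow(1)$ breaks. The paper closes this gap using property A: $G(X)$ is amenable by Skandalis--Tu--Yu, hence so are $G_\infty(X)$ and $X\times X$, hence the maximal and reduced groupoid C*-algebras of all three coincide, and since the maximal sequence is automatically exact, so is the reduced one. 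Your closing remark about ghosts points at exactly this phenomenon, but your proof never invokes it at the step where it is needed.

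Conversely, your $(3)\Rightarrow(2)$ does not need amenability at all. By definition of the reduced norm, $\bigoplus_{\omega\in\partial X}\pi_\omega$ is isometric, hence faithful, on $C^*_r(G_\infty(X))$; condition (3) lets you form $B=\bigoplus_{\omega}\pi_\omega(f)^{-1}$ as a bounded inverse of $\bigoplus_\omega\pi_\omega(f)$, and inverse-closedness of C*-subalgebras puts $B$ in the image of this representation, whence $f$ is invertible in $C^*_r(G_\infty(X))$. This is the paper's argument; your appeal to an ``inversion criterion for amenable \'etale groupoids'' reaches the same conclusion but the amenability hypothesis there is superfluous. The net effect is that property A enters the proof exactly once, for the exactness of \eqref{gpdses} used in $(1)\Leftrightarrow(2)$ --- not, as you claim, for $(3)\Rightarrow(2)$.
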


\begin{proof}
Consider the sequence of $C^*$-algebras 
$$
0\to C^*_r(X\times X)\to C^*_r(G(X))\to C^*_r(G_\infty(X))\to 0
$$
from line \eqref{gpdses} above. 
In general, this need not be exact at the middle term.  However, if $X$ has property A, then the groupoid $G(X)$ is amenable by \cite[Theorem 5.3]{Skandalis:2002ng}; this in turn implies amenability of $G_\infty(X)$.  Moreover, the pair groupoid $X\times X$ is automatically amenable.  Hence by \cite[Corollary 5.6.17]{Brown:2008qy} the maximal and reduced groupoid $C^*$-algebras of these three groupoids are the same.  On the other hand, the sequence 
$$
0\to C^*_{\max}(X\times X)\to C^*_{\max}(G(X))\to C^*_{\max}(G_\infty(X))\to 0
$$
is well-known to be exact automatically: the only issue is that exactness could fail at the middle term, and thus to show that any representation of $C_c(G(X))$ that contains $C_c(X\times X)$ in its kernel, and thus defines a representation of $C_c(G_\infty(X))$,  extends to $C^*_{\max}(G_\infty(X))$; this follows from the universal property of the maximal completion.   

We may thus conclude that the sequence in line \eqref{gpdses} is exact.  The natural identification from Lemma \ref{gpdbdo} of the middle term with $\AXtwoC$ identifies the ideal $C^*_r(X\times X)$ with $\mathcal{K}(\ell^2(X))$, and so the equivalence of parts (1) and (2) follows from exactness of this sequence and Atkinson's theorem.  

The fact that (2) implies (3) follows from the fact that the $*$-homomorphisms
$$
\pi_\omega:C^*_r(G(X))\to \mathcal{L}(\ell^2(G(X)_\omega))
$$ 
are automatically contractive and factor through $C^*_r(G_\infty(X))$ for all $\omega\in \partial X$. To see that (3) implies (2), note that the definition of the reduced norm implies that the direct sum representation
$$
\pi:=\bigoplus_{\omega\in \partial X}\pi_\omega:C^*_r(G_\infty(X))\to \mathcal{L}\Big(\bigoplus_{\omega\in \partial X} \ell^2(G(X)_\omega)\Big)
$$
is faithful.  The condition in (3) guarantees that the operator
$$
B:=\bigoplus_{\omega\in \partial X} (\pi_\omega(f))^{-1}
$$
makes sense on $\oplus \ell^2(G(X)_\omega)$, and it is clearly the inverse of $\pi(f)$.  As $C^*$-algebras are inverse closed, $B$ is in $\pi(C^*_r(G_\infty(X)))$, and whatever operator in $C^*_r(G_\infty(X))$ maps to $B$ under the faithful representation $\pi$ must in fact be an inverse to $f$ in $C^*_r(G_\infty(X))$.

Finally, note that (3) is equivalent to (4) by Lemma \ref{gpdlimopsame}.
\end{proof}

We do not know a short proof that the uniform boundedness condition in Theorem \ref{maingpd} is unnecessary.

\bibliographystyle{amsplain}
\bibliography{Generalbib}

\end{document}